\definecolor{darkblue}{rgb}{0.5,0,1.0}
\newcommand{\ZZ}{\mathbb{Z}}
\newenvironment{enumerate*}
  {\begin{enumerate}[(i)]
    \setlength{\itemsep}{10pt}
    \setlength{\parskip}{0pt}}
  {\end{enumerate}}
\newtheorem{theorem}{Theorem}[section]
\newtheorem{proposition}[theorem]{Proposition}
\newtheorem{lemma}[theorem]{Lemma}
\theoremstyle{definition}
\newtheorem{definition}[theorem]{Definition}
\newtheorem{remark}[theorem]{Remark}
\newtheorem{example}[theorem]{Example}
\DeclareMathOperator{\TPro}{TPro}
\DeclareMathOperator{\Pro}{Pro}
\DeclareMathOperator{\Bro}{Bro}
\DeclareMathOperator{\lcm}{lcm}
\DeclareMathOperator{\jdt}{jdt}
\DeclareMathOperator{\Rot}{Rot}
\DeclareMathOperator{\cyc}{cyc}
\DeclareMathOperator{\rev}{rev}
\newcommand{\Coll}{\mathrm{Coll}}
\newcommand{\Orb}{\mathrm{Orb}}
\newcommand{\Comp}{\mathsf{Comp}}
\newcommand{\Path}{\mathsf{Path}}
\newcommand{\Cycle}{\mathsf{Cycle}}
\newcommand{\s}{\mathsf{s}}
\newcommand{\cc}{\mathsf{c}}
\newcommand{\vv}{{{\bf v}}}
\newcommand{\stand}{\mathrm{stand}}
\newcommand{\dfn}[1]{\textcolor{blue}{\emph{#1}}}
\begin{document}

\title[]{Permutoric Promotion: \\ Gliding Globs, Sliding Stones, and Colliding Coins}
\subjclass[2010]{}

\author[]{Colin Defant}
\address[]{Department of Mathematics, Massachusetts Institute of Technology, Cambridge, MA 02139, USA}
\email{colindefant@gmail.com}

\author[]{Rachana Madhukara}
\address[]{Department of Mathematics, Massachusetts Institute of Technology, Cambridge, MA 02139, USA}
\email{rachanam@mit.edu}

\author[]{Hugh Thomas}
\address[]{Lacim, UQAM, Montréal, QC, H3C 3P8 Canada}
\email{thomas.hugh\_r@uqam.ca}

\maketitle

\begin{abstract}
The first author recently introduced \emph{toric promotion}, an operator that acts on the labelings of a graph $G$ and serves as a cyclic analogue of Sch\"utzenberger's promotion operator. Toric promotion is defined as the composition of certain \emph{toggle} operators, listed in a natural cyclic order. We consider more general \emph{permutoric promotion} operators, which are defined as compositions of the same toggles, but in permuted orders. We settle a conjecture of the first author by determining the orders of all permutoric promotion operators when $G$ is a path graph. In fact, we completely characterize the orbit structures of these operators, showing that they satisfy the cyclic sieving phenomenon. The first half of our proof requires us to introduce and analyze new \emph{broken promotion} operators, which can be interpreted via globs of liquid gliding on a path graph. For the latter half of our proof, we reformulate the dynamics of permutoric promotion via stones sliding along a cycle graph and coins colliding with each other on a path graph.  
\end{abstract}

\section{Introduction}\label{SecIntro}

In his study of the Robinson--Schensted--Knuth correspondence, Sch\"utzenberger \cite{Schutzenberger1, Schutzenberger2, Schutzenberger3} introduced a beautiful bijective operator called \emph{promotion}, which acts on the set of linear extensions of a finite poset. Haiman \cite{Haiman} and Malvenuto--Reutenauer \cite{Malvenuto} found that promotion could be defined as a composition of local \emph{toggle operators} (also called \emph{Bender--Knuth involutions}). There are now several articles connecting promotion to other areas \cite{Ayyer, Edelman, HopkinsRubey, Huang, Petersen, Poznanovic, Rhoades, StanleyPromotion, StrikerWilliams} and generalizing promotion in different directions \cite{Ayyer, Bernstein, DefantPromotionSorting, Dilks, Dilks2, StanleyPromotion}. Promotion is now one of the most extensively studied operators in the field of dynamical algebraic combinatorics.

Following the approach first considered by Malvenuto and Reutenauer \cite{Malvenuto}, we define promotion on labelings of graphs instead of linear extensions of posets. All graphs in this article are assumed to be simple. Let $G=(V,E)$ be a graph with $n$ vertices. A \dfn{labeling} of $G$ is a bijection $V\to \mathbb Z/n\mathbb Z$. We denote the set of labelings of $G$ by $\Lambda_G$. Given distinct $a,b\in\mathbb Z/n\mathbb Z$, let $(a\,\,b)$ be the transposition that swaps $a$ and $b$. For $i\in\ZZ/n\ZZ$, the \dfn{toggle} operator $\tau_i\colon \Lambda_G\to\Lambda_G$ is defined by 
\[
\tau_i(\sigma)=\begin{cases} (i\,\,i+1)\circ \sigma & \mbox{if } \{\sigma^{-1}(i),\sigma^{-1}(i+1)\}\not\in E; \\ \sigma & \mbox{if } \{\sigma^{-1}(i),\sigma^{-1}(i+1)\}\in E. \end{cases}
\]
In other words, $\tau_i$ swaps the labels $i$ and $i+1$ if those labels are assigned to nonadjacent vertices of $G$, and it does nothing otherwise. Define \dfn{promotion} to be the operator $\Pro\colon\Lambda_G\to\Lambda_G$ given by \[\Pro=\tau_{n-1}\cdots\tau_2\tau_1.\] Here and in the sequel, concatenation of operators represents composition. 

A recent trend in algebraic combinatorics aims to find cyclic analogues of more traditional ``linear'' objects (see \cite{AdinCyclic,Develin} and the references therein). The first author recently defined a cyclic analogue of promotion called \dfn{toric promotion} \cite{DefantToric}; this is the operator $\TPro\colon\Lambda_G\to\Lambda_G$ given by \[\TPro=\tau_{n}\tau_{n-1}\cdots\tau_2\tau_1=\tau_n\Pro.\] 

The first author proved the following theorem, which reveals that toric promotion has remarkably nice dynamical properties when $G$ is a forest. 

\begin{theorem}[\cite{DefantToric}]\label{thm:toric_main}
Let $G$ be a forest with $n\geq 2$ vertices, and let $\sigma\in\Lambda_G$ be a labeling. The orbit of toric promotion containing $\sigma$ has size \[(n-1)\frac{t}{\gcd(t,n)},\] where $t$ is the number of vertices in the connected component of $G$ containing $\sigma^{-1}(1)$. In particular, if $G$ is a tree, then every orbit of $\TPro\colon\Lambda_G\to\Lambda_G$ has size $n-1$.
\end{theorem}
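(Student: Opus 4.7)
The plan is to reduce computing $\TPro$-orbit sizes to the orbit sizes of a closely related auxiliary operator that commutes with $\TPro$. Let $\partial\colon\Lambda_G\to\Lambda_G$ be the label-rotation sending $\sigma$ to $\sigma-1$ (decrementing every label modulo $n$). A direct computation verifies the conjugation identity $\tau_i\,\partial = \partial\,\tau_{i+1}$ for every $i\in\ZZ/n\ZZ$. Setting $\rho := \partial\,\tau_1$, an induction using this identity yields $\rho^k = \partial^{k}\tau_k\tau_{k-1}\cdots\tau_1$ for $1\le k\le n$, and in particular $\rho^n = \TPro$; hence $\rho$ commutes with $\TPro$. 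Consequently, if $\rho$ has order $m$ on a labeling $\sigma$, then $\TPro$ has order $m/\gcd(m,n)$ on $\sigma$. Since $\gcd(n-1,n)=1$, the desired orbit size $(n-1)\,t/\gcd(t,n)$ equals $m/\gcd(m,n)$ exactly when $m = t(n-1)$, so the theorem reduces to showing that $\rho$ has order exactly $t(n-1)$ on $\sigma$.

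Next I would recast $\rho$ in combinatorial terms. Encoding $\sigma$ by the sequence $s = (\sigma^{-1}(1),\ldots,\sigma^{-1}(n))$ of vertices of $G$, an unwinding of the definitions gives
\[
\rho(s) \;=\; \begin{cases} (s_2,s_3,\ldots,s_n,s_1) & \mbox{if } \{s_1,s_2\}\in E,\\ (s_1,s_3,\ldots,s_n,s_2) & \mbox{if } \{s_1,s_2\}\notin E. \end{cases}
\]
I call these two steps an \emph{advance} ($s_1$ is shifted to the tail along an edge) and a \emph{skip} ($s_2$ is shifted to the tail while $s_1$ stays put), respectively. In both cases the new leading entry of $s$ is either $s_1$ itself or a neighbor of $s_1$ in $G$, so the connected component of the leading entry is invariant along each $\rho$-orbit, which makes $t$ a well-defined function on orbits.

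The core of the proof is then to show that the $\rho$-orbit of $s$ has length exactly $t(n-1)$. Let $C$ be the component of $s_1$, a tree on $t$ vertices. The key structural claim I would establish is that in one $\rho$-orbit the leading entry $s_1$ traces a closed walk in $C$ that uses each of the $t-1$ edges of $C$ exactly twice, once in each direction; this accounts for $2(t-1)$ advances, and the remaining $t(n-1)-2(t-1)$ steps are skips, triggered either by vertices outside $C$ or by non-adjacent $C$-vertices temporarily occupying position $2$ of $s$. Summing these counts yields the upper bound $t(n-1)$. For the matching lower bound I would introduce an invariant — naturally a pair recording both the signed edge-displacement of $s_1$ along the Eulerian circuit of the doubled tree $C$ and the cumulative cyclic shift of the subsequence of vertices outside $C$ — arranged to strictly accumulate under each step of $\rho$ and to have period exactly $t(n-1)$. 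The main obstacle will be carefully verifying this Eulerian-circuit picture in the forest setting: vertices outside $C$ can interrupt an advance and force additional skips, so keeping the walk of $s_1$ synchronized with the cyclic motion of those outside vertices demands delicate bookkeeping. The ``broken promotion'' operators and the ``sliding stones''/``colliding coins'' reformulations promised in the abstract seem purpose-built for this task, so the cleanest execution of the plan will most likely proceed through such an auxiliary model.
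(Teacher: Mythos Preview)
This theorem is not proved in the present paper: it is quoted from \cite{DefantToric} (note the citation in the theorem header) and is invoked only as a known input, for instance at the start of \Cref{sec:divisibility} to dispose of the $d=1$ case. There is therefore no proof here to compare your attempt against.

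On the substance of your sketch: the reduction is correct. The conjugation $\tau_i\,\partial=\partial\,\tau_{i+1}$ does give $\rho^n=\TPro$ for $\rho=\partial\tau_1$, your advance/skip description of $\rho$ on the sequence $(\sigma^{-1}(1),\ldots,\sigma^{-1}(n))$ is right, and the invariance of the component of the leading vertex follows as you say. What remains --- showing the $\rho$-orbit has length exactly $t(n-1)$ --- is the entire content of the theorem, and here you only announce an Eulerian-walk count and an unspecified invariant without constructing either.

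One concrete misstep: your closing suggestion that the broken promotion operators and the sliding-stones/colliding-coins reformulations of this paper are ``purpose-built for this task'' is off target. Those constructions (\Cref{sec:broken,sec:Broken_Path,sec:Orbit_Structure}) are developed exclusively for $G=\Path_n$ --- the stones live on $\Cycle_n$, the coins on $\Path_n$, and the entire analysis depends on the linear order of the path --- and they are never used toward \Cref{thm:toric_main}, which concerns arbitrary forests. Completing your argument requires a direct analysis of the advance/skip dynamics on the tree component $C$, as in \cite{DefantToric}, not the path-specific machinery developed here.
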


\Cref{thm:toric_main} stands in stark contrast to the wild dynamics of promotion on most forests. For example, even when $G$ is a path graph with $7$ vertices, the order of $\Pro\colon\Lambda_G\to\Lambda_G$ is $3224590642072800$, whereas all orbits of $\TPro\colon\Lambda_G\to\Lambda_G$ have size $6$. 

In \cite{DefantToric}, the first author (taking a suggestion from Tom Roby) proposed studying a generalization of toric promotion in which the toggle operators $\tau_1,\ldots,\tau_n$ can be composed in any order. In what follows, we let $[n]=\{1,\ldots,n\}$. 

\begin{definition}
Let $G$ be a graph with $n$ vertices, and let $\pi\colon[n]\to \mathbb Z/n\mathbb Z$ be a bijection. The \dfn{permutoric promotion} operator $\TPro_\pi\colon\Lambda_G\to\Lambda_G$ is defined by \[\TPro_\pi=\tau_{\pi(n)}\cdots\tau_{\pi(2)}\tau_{\pi(1)}.\] 
\end{definition}

One would ideally hope to have an extension of \Cref{thm:toric_main} to arbitrary permutoric promotion operators. Unfortunately, trying to completely describe the orbit structure of $\TPro_\pi\colon\Lambda_G\to\Lambda_G$ for arbitrary forests $G$ and arbitrary permutations $\pi$ seems to be very difficult. However, it turns out that we \emph{can} do this when $G$ is a path. To state our main result, we need a bit more terminology.

Let $[k]_q=\frac{1-q^k}{1-q}=1+q+\cdots+q^{k-1}$ and $[k]_q!=[k]_q[k-1]_q\cdots[1]_q$. The \dfn{$q$-binomial coefficient} ${k\brack r}_q$ is the polynomial $\dfrac{[k]_q!}{[r]_q![k-r]_q!}\in\mathbb C[q]$.

Let $X$ be a finite set, and let $f\colon X\to X$ be an invertible map of order $\omega$ (i.e., $\omega$ is the smallest positive integer such that $f^\omega(x)=x$ for all $x\in X$). Let $F(q)\in\mathbb C[q]$ be a polynomial in the variable $q$. Following \cite{CSPDefinition}, we say the triple $(X,f,F(q))$ \dfn{exhibits the cyclic sieving phenomenon} if for every integer $k$, the number of elements of $X$ fixed by $f^k$ is $F(e^{2\pi ik/\omega})$. 

Although we view the set $\mathbb Z/n\mathbb Z$ as a ``cyclic'' object, it will often be convenient to identify 
$\mathbb Z/n\mathbb Z$ with the ``linear'' set $[n]$ and consider the total ordering of its elements given by $1<2<\cdots<n$. If $\pi\colon[n]\to\ZZ/n\ZZ$ is a bijection, then a \dfn{cyclic descent} of $\pi^{-1}$ is an element $i\in\mathbb Z/n\mathbb Z$ such that $\pi^{-1}(i)>\pi^{-1}(i+1)$ (note that $n$ is permitted to be a cyclic descent). 

Let $\Path_n$ denote the path graph with $n$ vertices. In \cite{DefantToric}, the first author conjectured (using different language) that for every bijection 
$\pi\colon[n]\to\mathbb Z/n\mathbb Z$, the order of $\TPro_\pi\colon\Lambda_{\Path_n}\to\Lambda_{\Path_n}$ is $d(n-d)$, where $d$ is the number of cyclic descents of $\pi^{-1}$. Our main theorem not only proves this conjecture, but also determines the entire orbit structure of permutoric promotion in this case. 

\begin{theorem}\label{thm:main}
Let $\pi\colon[n]\to\ZZ/n\ZZ$ be a bijection, and let $d$ be the number of cyclic descents of $\pi^{-1}$. The order of the permutoric promotion operator $\TPro_\pi\colon\Lambda_{\Path_n}\to\Lambda_{\Path_n}$ is $d(n-d)$. Moreover, the triple \[\left(\Lambda_{\Path_n},\TPro_\pi,n(d-1)!(n-d-1)![n-d]_{q^d}{n-1\brack d-1}_{q}\right)\] exhibits the cyclic sieving phenomenon. 
\end{theorem}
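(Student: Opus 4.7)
The starting observation is that toggles $\tau_i$ and $\tau_j$ commute whenever the pairs $\{i,i+1\}$ and $\{j,j+1\}$ are disjoint in $\ZZ/n\ZZ$. Consequently, $\TPro_\pi$ depends on $\pi$ only through the relative order of each adjacent pair $\tau_i,\tau_{i+1}$, which is in turn recorded by the cyclic descent set $D$ of $\pi^{-1}$. The first step is to formalize this reduction and combine it with the natural symmetries (cyclically shifting the labels by a constant and reversing the path $\Path_n$) to cut the problem down to analyzing $\TPro_\pi$ for a convenient ``canonical'' $\pi$ attached to each value of $d$, together with a separate argument showing that varying $D$ while keeping $|D|=d$ fixed preserves the full orbit structure.

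The second step, matching the ``gliding globs'' half of the paper, is to decompose $\TPro_\pi$ as a product of \emph{broken promotion} operators $\Bro$---each a run $\tau_b\tau_{b-1}\cdots\tau_a$ indexed by a consecutive range $[a,b]$ of label values, visualized as a single glob of liquid sliding along the path until it collides with a fellow glob or a barrier. Iterating $\TPro_\pi$ then produces a parallel system of gliding globs; controlling how they interact gives a precise description of how each label trajectory evolves and reduces the dynamics of $\TPro_\pi$ on $\Lambda_{\Path_n}$ to a finite-state motion on a combinatorial configuration space.

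The third step is to recast this motion in two complementary particle models. In the ``sliding stones'' model, encode a labeling as a placement of $d$ stones of one color and $n-d$ of another on the vertices of a cycle graph; under $\TPro_\pi$ the color pattern rotates rigidly, while the stones within each color class get internally reshuffled. In the ``colliding coins'' model, labels live on a path and exchange identities under collision rules that make fixed-point counts of $\TPro_\pi^k$ directly computable. The factors of the cyclic sieving polynomial
\[
n(d-1)!(n-d-1)![n-d]_{q^d}{n-1\brack d-1}_q
\]
should then match up with combinatorial features of this decomposition: the $q$-binomial ${n-1\brack d-1}_q$ with the cyclic pattern of stone colors (via the Reiner--Stanton--White CSP for $q$-binomials), the factor $n[n-d]_{q^d}$ with the rotational dynamics on the cycle, and the factor $(d-1)!(n-d-1)!$ with the orderings of coin labels within each color class. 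The asserted order $d(n-d)$ should then arise as the least common multiple of the periods of these coupled motions, combined with the coupling between rotation of the color pattern and internal reshuffling.

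The main obstacle, in my view, is showing that the stones-and-coins description applies uniformly in the cyclic descent set $D$, not just for one canonical representative. Reducing arbitrary $D$ to a canonical representative of the same size is the crux: one needs explicit conjugations (or at least orbit-preserving bijections) relating $\TPro_\pi$ and $\TPro_{\pi'}$ whenever $D$ and $D'$ differ by a single adjacent swap, and the broken promotion factorization from Phase 2 should be precisely the tool that makes such conjugations transparent. Once that reduction is in hand, both the order computation and the CSP verification should reduce to a product CSP argument on the stone and coin components of the model.
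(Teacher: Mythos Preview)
Your outline has the right scaffolding---reduce to a canonical $\pi$, rewrite $\TPro_\pi$ via broken promotion, then analyze via stones and coins---but you have misjudged where the real work lies, and one essential ingredient is missing.

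First, the reduction you call ``the crux'' is in fact the easy part. Two bijections $\pi,\pi'$ with the same cyclic descent set give literally the same operator (commuting toggles), and if the cyclic descent sets differ by moving one element, the corresponding acyclic orientations of $\Cycle_n$ differ by a single source-to-sink flip, which yields an explicit conjugation $\TPro_\beta\circ\tau_i=\tau_i\circ\TPro_{\beta'}$. That is all there is to it; no broken promotion is needed here.

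Second, and more seriously, you are missing the divisibility step: before the stones-and-coins analysis can even begin, one must prove that \emph{every} orbit of $\TPro_\beta$ has size divisible by $\lcm(d,n-d)$. This is what permits passage from $\TPro_\beta$ to the auxiliary operator $\Phi_{n,d}$ (essentially $\TPro_\beta^d$ up to a cyclic shift) without losing orbit information. The paper devotes an entire section to this, and it is not obvious: it requires the gliding-globs description of $\cyc\Bro_{\mathscr R}$ together with a delicate identity expressing $\TPro_\beta^\gamma$ as $\cyc^{-q}\Bro_J(\cyc\Bro_{\mathscr R})^q$. Your plan jumps directly from broken promotion to stones and coins as if the latter describes $\TPro_\beta$ itself; it does not---it describes a power of $\TPro_\beta$, and the divisibility is exactly what closes that gap.

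Third, your picture of the stones-and-coins model is off. There are not two colors of stones encoding a binary necklace; rather there are $d$ distinguishable stones on $\Cycle_n$ carrying $n$ replicas, and $d$ coins on $\Path_n$ that collide with each other and with the walls. The $q$-binomial does not arise from a necklace CSP on stone colors but from the CSP for \emph{rotation on compositions}: one builds a Hasse diagram of collisions, assigns energies to its edges, and shows that the energy composition of a transversal evolves under $\Rot_{n,d}$. The factors $d!(n-d)!$ then come from the pair $(\stand,\overline\stand)$ of standardizations, and the factor $[n-d]_{q^d}$ from the ratio between $\Phi_{n,d}$-orbit sizes and $\Rot_{n,d}$-orbit sizes. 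Your proposed ``product CSP'' is the right shape, but the mechanism linking $\TPro_\beta$-orbits to $\Rot_{n,d}$-orbits---via energy compositions of collision transversals---is the technical heart of the argument and is absent from your plan.
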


Note that when $d=1$, the sieving polynomial in \Cref{thm:main} is $n(n-2)![n-1]_q$, which agrees with \Cref{thm:toric_main}. 

Suppose $B$ is a proper subset of $\mathbb Z/n\mathbb Z$. In order to understand permutoric promotion and prove \Cref{thm:main}, we define the \dfn{broken promotion} operator $\Bro_B\colon\Lambda_G\to\Lambda_G$ as follows. Let $B_1,\ldots,B_k$ be the vertex sets of the connected components of the subgraph of $\Cycle_n$ induced by $B$. For $1\leq i\leq k$, let us write $B_i=\{a(i),a(i)+1,\ldots,b(i)\}$, and let $\Bro_{B_i}=\tau_{b(i)}\cdots\tau_{a(i)+1}\tau_{a(i)}$. We then define $\Bro_B=\Bro_{B_1}\cdots\Bro_{B_k}$ (the order does not matter since $\Bro_{B_1},\ldots,\Bro_{B_k}$ commute with each other). 

Let $\cyc\colon\Lambda_G\to\Lambda_G$ be the \dfn{cyclic shift} operator defined by $(\cyc(\sigma))(v)=\sigma(v)+1$. In \Cref{sec:broken}, we give a description of the operator $\cyc\Bro_B$ in terms of ``gliding globs'' of liquid. Roughly speaking, some of the labels are immersed in globs of liquid, these globs (and their labels) glide along paths in $G$ in a \emph{jeu de taquin} fashion, and then some of the labels are changed appropriately. We also show that certain indicator functions are \emph{homomesic} for $\cyc\Bro_B$ (see \Cref{prop:homomesy}). In \Cref{sec:Broken_Path}, we specialize to the case when $G=\Path_n$ and establish useful connections between broken promotion and permutoric promotion. The purpose of \Cref{sec:divisibility} is to prove that all of the sizes of the orbits of $\TPro_\pi$ are divisible by $\lcm(d,n-d)$ (where $G=\Path_n$ and $d$ is the number of cyclic descents of $\pi^{-1}$). 
In \Cref{sec:Orbit_Structure}, we use this divisibility result to reformulate the analysis of permutoric promotion in terms of ``sliding stones'' and ``colliding coins.'' Roughly speaking, we place some stones on the cycle graph and allow them slide around as we apply toggle operators. At the same time, we place coins on the path graph and allow them to move around and collide with one another. It turns out that the dynamical properties of permutoric promotion are closely related to those of the \emph{stones diagrams} and \emph{coins diagrams}; this allows us to complete the proof of \Cref{thm:main}. Let $\Comp_d(n)$ be the set of compositions of $n$ into $d$ parts, and define $\Rot_{n,d}\colon\Comp_d(n)\to\Comp_d(n)$ by $\Rot_{n,d}(a_1,a_2,\ldots,a_d)=(a_2,\ldots,a_d,a_1)$. We show how to associate an orbit of $\Rot_{n,d}$ to the dynamics of the coins diagrams by recording how far each coin must travel when passing from one collision to the next. It will turn out that the form of the sieving polynomial in \Cref{thm:main} arises from the fact that the triple $\left(\Comp_d(n),\Rot_{n,d},{n-1 \brack d-1}_q\right)$ exhibits the cyclic sieving phenomenon.
In \Cref{sec:orbit_broken}, we apply \Cref{thm:main} to derive the following theorems. 

\begin{theorem}\label{thm:broken_main2}
Let $d$ and $n$ be integers such that $1\leq d\leq n-1$. The order of the operator $\cyc\Bro_{\{1,\ldots,d\}}\colon\Lambda_{\Path_n}\to\Lambda_{\Path_n}$ is $(n-d)n$. Moreover, the triple \[\left(\Lambda_{\Path_n},\cyc\Bro_{\{1,\ldots,d\}},(d-1)!(n-d-1)![n]_{q^{n-d}}[n-d]_{q^d}{n-1\brack d-1}_{q}\right)\] exhibits the cyclic sieving phenomenon. 
\end{theorem}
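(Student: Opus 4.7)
The plan is to derive \Cref{thm:broken_main2} from \Cref{thm:main}. Set $B=\{1,\ldots,d\}$ and $T=\cyc\,\Bro_B$. Define the bijection $\pi\colon [n]\to\mathbb{Z}/n\mathbb{Z}$ by $\pi(j)=j$ for $1\le j\le d$ and $\pi(d+j)=n+1-j$ for $1\le j\le n-d$, so that
\[
\TPro_\pi \;=\; \tau_{d+1}\tau_{d+2}\cdots\tau_n\,\tau_d\tau_{d-1}\cdots\tau_1.
\]
A direct check shows that $\pi^{-1}=(1,2,\ldots,d,n,n-1,\ldots,d+1)$ has exactly $n-d$ cyclic descents, so by \Cref{thm:main} the operator $\TPro_\pi$ has order $d(n-d)$ and comes with a sieving polynomial $F(q)$.

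The heart of the argument is the identity $T^n=\TPro_\pi^{\,d}$. To establish it, I would first apply the commutation $\cyc\,\Bro_B=\Bro_{B+1}\,\cyc$ repeatedly to expand
\[
T^n \;=\; \Bro_{B+1}\,\Bro_{B+2}\cdots\Bro_{B+(n-1)}\,\Bro_B,
\]
a product of $dn$ toggles. Next, use the toggle commutations $\tau_i\tau_j=\tau_j\tau_i$ (valid whenever $i$ and $j$ are not cyclically adjacent) to rewrite this as $(\tau_{d+1}\tau_{d+2}\cdots\tau_n\,\tau_d\cdots\tau_1)^d$. This combinatorial rewriting is the main obstacle; it visibly holds in the extreme cases $d=1$ and $d=n-1$ and in the small cases $n\le 6$ I have checked by hand, and a general proof would likely proceed by induction on $n$ or by recognizing the identity as a reduced-word relation in an affine Coxeter-type setting.

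Granting this identity and invoking \Cref{thm:main}, $T^n=\TPro_\pi^{\,d}$ has order $n-d$. Because $T$ shifts every label by $1\pmod n$, any orbit of $T$ has size divisible by $n$; combined with $T^{n(n-d)}=\mathrm{id}$, this forces the order of $T$ to equal exactly $n(n-d)$, giving the first assertion of \Cref{thm:broken_main2}.

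For the cyclic sieving statement, let $\zeta=e^{2\pi i/(n(n-d))}$ and let $G(q)$ be the proposed sieving polynomial. When $n\nmid k$, no labeling is fixed by $T^k$ (all orbit sizes are multiples of $n$), and correspondingly $G(\zeta^k)=0$ because the factor $[n]_{q^{n-d}}=\dfrac{1-q^{n(n-d)}}{1-q^{n-d}}$ vanishes at $\zeta^k$. When $k=nm$, we have $T^{nm}=\TPro_\pi^{\,dm}$, so by the CSP of \Cref{thm:main} the number of fixed points of $T^{nm}$ equals $F(\omega^m)$, where $\omega=\zeta^n$ is a primitive $(n-d)$-th root of unity. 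The remaining task is the polynomial identity $G(\omega^m)=F(\omega^m)$; this reduces to showing that both sides vanish when $\omega^{md}\neq 1$ (using the factor $[n-d]_{q^d}$ in $G$ and the vanishing of ${n-1\brack n-d-1}_q$ at the relevant roots of unity in $F$), and to verifying the identity $d\cdot{n-1\brack n-d-1}_{\omega^m}=(n-d)\cdot{n-1\brack d-1}_{\omega^m}$ when $\omega^{md}=1$, which is a standard consequence of $q$-binomial identities at roots of unity.
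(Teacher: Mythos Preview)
Your strategy—relate $(\cyc\Bro_{\{1,\ldots,d\}})^n$ to a $d$-th power of a permutoric promotion operator, import the CSP from \Cref{thm:main}, and then account for the extra factor of $n$ in the orbit sizes—is exactly the paper's approach. But two of your steps are genuine gaps.

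\textbf{The toggle identity.} You openly leave $T^n=\TPro_\pi^{\,d}$ unproven, and this is precisely the substantive content. The paper establishes the analogous identity by choosing the orientation $\beta$ with unique source $d$ and unique sink $n$ (so with $d$ counterclockwise edges, not $n-d$) and applying the suffix criterion \Cref{lem:suffix}: one simply checks that for every suffix $X$ of the word $\prod_{i=n}^1(\tau_i\tau_{i+1}\cdots\tau_{i+d-1})$ and every arrow $a\to b$ of $\beta$, the difference $X\!\langle a\rangle-X\!\langle b\rangle$ lies in $\{0,1\}$. This yields \Cref{prop:Psi}, and then the identity $\TPro_\beta^d=(\cyc^{-1}\Bro_{\{1,\ldots,d\}}^{-1})^n$ (\Cref{rem:Bro_d}) follows by commuting $\cyc$ past the toggles. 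Your proposed inductive or affine-Coxeter argument would have to reproduce this.

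\textbf{Divisibility by $n$.} The assertion ``$T$ shifts every label by $1\pmod n$'' is false: $\Bro_B$ permutes the labels $1,\ldots,d{+}1$ in a $\sigma$-dependent way before $\cyc$ acts, so there is no invariant that simply increments by $1$. The paper obtains $n\mid|\mathcal O|$ from the homomesy result \Cref{prop:homomesy}: since $n\notin\{1,\ldots,d\}$, the indicator $\mathbbm 1_{v,1}$ has orbit-average $1/n$, which forces each orbit size to be a multiple of $n$.

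A smaller point: by choosing $\pi$ with $n-d$ cyclic descents, your sieving polynomial from \Cref{thm:main} comes out as $n(n-d-1)!(d-1)![d]_{q^{n-d}}{n-1\brack n-d-1}_q$, and you then need the identity $[d]_{q^{n-d}}{n-1\brack n-d-1}_q=[n-d]_{q^d}{n-1\brack d-1}_q$ to match the target. This is in fact a one-line polynomial identity (expand both sides over $1-q$), so your root-of-unity case analysis is unnecessary; the paper sidesteps the issue entirely by using the orientation with $d$ counterclockwise edges, for which the CSP polynomial already has the desired form and \Cref{lem:CSP_technical} applies directly.
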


For any real number $x$, let $[[x]]$ denote the integer closest to $x$, with the convention that $[[x]]=x-1/2$ if $x-1/2\in \ZZ$.

\begin{theorem}\label{thm:broken_main}
Let $d$ and $n$ be positive integers such that $1\leq d\leq\left\lfloor n/2\right\rfloor$. For $i\in\ZZ$, let $s_i=[[in/d]]$, and let $\mathscr R=(\ZZ/n\ZZ)\setminus\{s_1-1,\ldots,s_d-1\}$. The order of the operator $\cyc\Bro_{\mathscr R}\colon\Lambda_{\Path_n}\to\Lambda_{\Path_n}$ is $dn$. Moreover, the triple \[\left(\Lambda_{\Path_n},\cyc\Bro_{\mathscr R},(d-1)!(n-d-1)![n]_{q^{d}}[n-d]_{q^d}{n-1\brack d-1}_{q}\right)\] exhibits the cyclic sieving phenomenon. 
\end{theorem}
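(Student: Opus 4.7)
The plan is to deduce \Cref{thm:broken_main} from \Cref{thm:main} by factoring a suitable power of $\cyc\Bro_{\mathscr R}$ through a permutoric promotion operator. The first ingredient is the commutation rule $\cyc\tau_i=\tau_{i+1}\cyc$, which is immediate from the definitions and gives $\cyc\Bro_B=\Bro_{B+1}\cyc$ for any proper $B\subset\ZZ/n\ZZ$. Iterating yields
\[
(\cyc\Bro_{\mathscr R})^k=\Bro_{\mathscr R+1}\,\Bro_{\mathscr R+2}\cdots\Bro_{\mathscr R+k}\,\cyc^k,
\]
and setting $k=n$ eliminates the $\cyc$ factor, leaving a product of $n$ broken promotions running over all cyclic shifts of $\mathscr R$.

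The central claim is the operator identity
\[
(\cyc\Bro_{\mathscr R})^n=(\TPro_{\pi_0})^{n-d},
\]
where $\pi_0\colon[n]\to\ZZ/n\ZZ$ is any bijection whose inverse has cyclic descents precisely at $s_1-1,\ldots,s_d-1$. Both sides are products of $n(n-d)$ toggles in which each $\tau_i$ appears exactly $n-d$ times, since $i\in\mathscr R+j$ for exactly $|\mathscr R|=n-d$ values of $j$. The content of the identity is that, because $\mathscr R^c$ is approximately evenly spaced in $\ZZ/n\ZZ$, the $n$ blocks $\Bro_{\mathscr R+j}$ on the left can be reindexed into $n-d$ parallel copies of $\TPro_{\pi_0}$: for each $r\in\mathscr R$, take the occurrence of $\tau_i$ lying inside the block $\Bro_{\mathscr R+(i-r)}$ and concatenate these across $i\in\ZZ/n\ZZ$. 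Turning this combinatorial parsing into an equality of operators on $\Lambda_{\Path_n}$ is the principal obstacle; I would attack it using the gliding-globs description of broken promotion from \Cref{sec:broken}, tracking how globs propagate between successive blocks.

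Granting the factorization, \Cref{thm:main} shows that $(\TPro_{\pi_0})^{n-d}$ has order $d(n-d)/(n-d)=d$, so the order of $\cyc\Bro_{\mathscr R}$ divides $nd$. To verify the cyclic sieving formula, I would evaluate $F(\zeta^k)$ at $\zeta=e^{2\pi i/(nd)}$ and match it to the number of fixed points of $(\cyc\Bro_{\mathscr R})^k$. When $n\mid k$, both quantities reduce via the factorization to fixed-point counts for a power of $\TPro_{\pi_0}$, which are given by \Cref{thm:main}, and the factor $[n]_{q^d}$ at $\zeta^k$ simplifies to $n$, accounting for the only discrepancy between the two sieving polynomials. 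When $n\nmid k$, the factor $[n]_{q^d}$ vanishes at $\zeta^k$; to match this I would show directly that $(\cyc\Bro_{\mathscr R})^k$ has no fixed points, which I expect to follow from the homomesy result of \Cref{prop:homomesy} applied to a suitable indicator function and forcing every orbit size to be a multiple of $n$. Finally, since $F(q)$ attains the value $n!$ only at $\zeta^k$ with $nd\mid k$, the order of $\cyc\Bro_{\mathscr R}$ is exactly $nd$, completing the proof.
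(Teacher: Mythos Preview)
Your overall strategy coincides with the paper's: establish the operator identity $(\cyc\Bro_{\mathscr R})^n=\TPro_{\beta}^{\,n-d}$ for the orientation $\beta=\beta_{\mathscr S}$, use the homomesy of \Cref{prop:homomesy} to force every $\cyc\Bro_{\mathscr R}$-orbit size to be a multiple of $n$, and then transfer the cyclic sieving statement from \Cref{thm:main} (your root-of-unity case split is exactly what \Cref{lem:CSP_technical} packages). So the deduction from the identity is fine.

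The gap is the identity itself. Your ``reindexing into $n-d$ parallel copies'' correctly counts toggle multiplicities but does not by itself yield an operator equality: you need to know that the two toggle words differ only by commutations $\tau_i\tau_j=\tau_j\tau_i$ with $|i-j|\geq 2$. Your proposed attack via gliding globs is not how the paper proceeds and it is not clear it would close this gap. The paper instead obtains the identity as the special case $\gamma=n-d$, $q=n$, $r=0$ of \Cref{prop:TProcycBro_R}, whose proof rests on \Cref{lem:suffix}: one checks that for every suffix $X$ of the word $\Bro_{\mathscr R+1}\Bro_{\mathscr R+2}\cdots\Bro_{\mathscr R+n}$ and every arrow $a\to b$ of $\beta_{\mathscr S}$, the difference $X\langle a\rangle-X\langle b\rangle$ lies in $\{0,1\}$. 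This suffix criterion is precisely what certifies that the required commutations can be performed. I would replace your gliding-globs plan with this argument (or simply cite \Cref{prop:TProcycBro_R}).
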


While noteworthy on its own, the homomesy result from \Cref{sec:broken} also ends up being useful for proving \Cref{thm:broken_main2,thm:broken_main}.

\section{Basics}\label{sec:basics}

Let $n$ be a positive integer. Given integers $x\leq y$, we let $[x,y]_n$ denote the multiset with elements in $\ZZ/n\ZZ$ obtained by reducing the set $\{x,x+1,\ldots,y\}$ modulo $n$. For example, $[3,7]_3$ is the multiset $\{0,0,1,1,2\}$, where the elements are in $\ZZ/3\ZZ$. 

Given a finite set $X$ and an invertible map $f\colon X\to X$, we let $\Orb_f$ denote the set of orbits of $f$. We will need the following technical lemma concerning the cyclic sieving phenomenon. 

\begin{lemma}\label{lem:CSP_technical}
Let $f\colon X\to X$ and $g\colon \widetilde X\to \widetilde X$ be invertible maps, where $X$ and $\widetilde X$ are finite sets. Let $\{k_i^{m_i}:1\leq i\leq \ell\}$ be the multiset of orbit sizes of $f$, where we use superscripts to denote multiplicities. Let $\omega$ be the order of $f$, and let $F(q)\in\mathbb C[q]$ be such that the triple $(X,f,F(q))$ exhibits the cyclic sieving phenomenon. If $N\in\mathbb Z_{>0}$ and $\chi\in\mathbb Q_{>0}$ are such that $\{(Nk_i)^{\chi m_i}:1\leq i\leq \ell\}$ is the multiset of orbit sizes of $g$, then $g$ has order $N\omega$, and the triple $(\widetilde X,g,\chi[N]_{q^\omega}F(q))$ exhibits the cyclic sieving phenomenon. 
\end{lemma}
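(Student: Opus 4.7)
The plan is to verify the two claims separately, exploiting the standard fact that, for an invertible map on a finite set, the order equals the least common multiple of its orbit sizes and the number of fixed points of its $k$-th power equals the sum of the sizes of those orbits whose size divides $k$.

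First I would compute the order. Since $\omega$ is the order of $f$, it equals $\lcm\{k_i : 1\leq i\leq \ell\}$, so the order of $g$ is
\[
\lcm\{Nk_i : 1\leq i\leq \ell\} = N\lcm\{k_i : 1\leq i\leq \ell\} = N\omega.
\]
This gives the first conclusion immediately.

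For the cyclic sieving claim, fix an integer $k$ and let $\zeta = e^{2\pi i k/(N\omega)}$. The number of elements of $\widetilde X$ fixed by $g^k$ equals
\[
\sum_{i=1}^{\ell} \chi m_i \cdot Nk_i \cdot \mathbbm{1}[Nk_i \mid k].
\]
On the polynomial side, I would evaluate the two factors of $\chi[N]_{q^\omega}F(q)$ at $q=\zeta$ separately. Since $\zeta^\omega = e^{2\pi i k/N}$ is a (not necessarily primitive) $N$-th root of unity, the geometric sum
\[
[N]_{q^\omega}\big|_{q=\zeta} = \sum_{j=0}^{N-1} e^{2\pi i kj/N}
\]
equals $N$ if $N\mid k$ and equals $0$ otherwise. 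Assuming $N\mid k$, the remaining factor $F(\zeta) = F(e^{2\pi i (k/N)/\omega})$ is, by the cyclic sieving hypothesis for $f$, the number of points of $X$ fixed by $f^{k/N}$, namely $\sum_i m_i k_i \mathbbm{1}[k_i\mid k/N]$.

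The final step is to match the two counts. Observe that $Nk_i \mid k$ if and only if $N\mid k$ and $k_i \mid k/N$: the forward direction is clear, and the converse holds because $k_i \mid k/N$ and $N \mid k$ give $Nk_i \mid k$. Therefore when $N\mid k$,
\[
\chi[N]_{q^\omega}F(q)\big|_{q=\zeta} = \chi N \sum_{i=1}^{\ell} m_i k_i \mathbbm{1}[k_i\mid k/N] = \sum_{i=1}^{\ell} \chi m_i \cdot Nk_i \cdot \mathbbm{1}[Nk_i \mid k],
\]
which is exactly the fixed-point count for $g^k$; and when $N\nmid k$, both sides vanish. This completes the verification. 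There is no real obstacle here — the argument is pure bookkeeping — the only subtle point being the conversion $Nk_i \mid k \iff N\mid k \text{ and } k_i\mid k/N$, which is what forces the factor $[N]_{q^\omega}$ (as opposed to $[N]_q$ or some other $q$-analogue) to appear in the sieving polynomial.
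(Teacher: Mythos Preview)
Your proof is correct and follows essentially the same approach as the paper's: split into the cases $N\mid k$ and $N\nmid k$, use that $[N]_{\zeta^\omega}$ vanishes in the latter case while all $g$-orbits have size divisible by $N$, and in the former case reduce to the cyclic sieving hypothesis for $f^{k/N}$. The only difference is cosmetic---you write out the fixed-point counts explicitly as sums over orbit sizes, whereas the paper argues directly that $\chi N$ times the number of $f^{k/N}$-fixed points of $X$ equals the number of $g^k$-fixed points of $\widetilde X$.
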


\begin{proof}
It is clear that $g$ has order $N\omega$. Fix an integer $k$. When we evaluate the polynomial $\chi[N]_{q^\omega}F(q)$ at $q=e^{2\pi ik/(N\omega)}$, we obtain $\chi[N]_{e^{2\pi ik/N}}F(e^{2\pi i(k/N)/\omega})$; we want to show that this is the number of elements of $\widetilde X$ that are fixed by $g^k$. If $k$ is not divisible by $N$, then there are no such elements because all orbits of $g$ have sizes divisible by $N$; in this case, we are done because the factor $[N]_{e^{2\pi ik/N}}$ is $0$. Now suppose $k$ is divisible by $N$. Because $(X,f,F(q))$ exhibits the cyclic sieving phenomenon, $F(e^{2\pi i(k/N)/\omega})$ is the number of elements of $X$ fixed by $f^{k/N}$. Therefore, $\chi NF(e^{2\pi i(k/N)/\omega})$ is the number of elements of $\widetilde X$ fixed by $g^k$. This completes the proof because $\chi[N]_{e^{2\pi ik/N}}F(e^{2\pi i(k/N)/\omega})=\chi NF(e^{2\pi i(k/N)/\omega})$. 
\end{proof}

We write $\Path_n$ and $\Cycle_n$ for the path with $n$ vertices and the cycle with $n$ vertices, respectively. Identify the vertices of $\Cycle_n$ with $\mathbb Z/n\mathbb Z$ in such a way that they appear in the cyclic order $1,2,\ldots,n$ when read clockwise around the cycle. Let $v_1,\ldots,v_n$ be the vertices of $\Path_n$, listed from left to right (we draw the path horizontally in the plane). As in the introduction, let us fix a graph $G$ and consider the toggle operators $\tau_i$ and the permutoric promotion operators $\TPro_\pi$ on $\Lambda_G$. 

If $\mathcal D$ is an acyclic directed graph with vertex set $\mathcal V$, then we can define a partial order $\leq_{\mathcal D}$ on $\mathcal V$ by declaring $v\leq_{\mathcal D} v'$ whenever there is a directed path in $\mathcal D$ from $v$ to $v'$; the resulting poset $(\mathcal V,\leq_{\mathcal D})$ is called the \dfn{transitive closure} of $\mathcal D$. 

A \dfn{linear extension} of an $n$-element poset $(P,\leq_P)$ is a word $p_1\cdots p_n$ whose letters are the elements of $P$ (with each element appearing exactly once) such that $i\leq j$ whenever $p_i\leq_P p_j$. Given a bijection $\pi\colon[n]\to \mathbb Z/n\mathbb Z$, we obtain an acyclic orientation $\alpha_\pi$ of $\Cycle_n$ by orienting each edge $\{i,i+1\}$ from $i$ to $i+1$ if and only if $\pi^{-1}(i)<\pi^{-1}(i+1)$. If $\beta$ is any acyclic orientation of $\Cycle_n$, then the linear extensions of its transitive closure $(\mathbb Z/n\mathbb Z,\leq_\beta)$ are precisely the words $\pi(1)\cdots\pi(n)$ such that $\pi\colon[n]\to \ZZ/n\ZZ$ is a bijection satisfying $\alpha_\pi=\beta$. 

It is well known that any linear extension of a finite poset can be obtained from any other linear extension of the same poset by repeatedly swapping consecutive incomparable elements. If $i,j\in \mathbb Z/n\mathbb Z$ are incomparable in $(\mathbb Z/n\mathbb Z,\leq_\beta)$, then they are not adjacent in $\Cycle_n$, so the toggle operators $\tau_i$ and $\tau_j$ commute. This implies that if $\pi,\pi'\colon[n]\to\mathbb Z/n\mathbb Z$ are such that $\alpha_\pi=\alpha_{\pi'}$, then the expression for $\TPro_{\pi'}$ as a composition of toggle operators can be obtained from the expression for $\TPro_\pi$ by repeatedly swapping consecutive toggle operators that commute with each other, so $\TPro_\pi=\TPro_{\pi'}$. Therefore, given an acyclic orientation $\beta$ of $\Cycle_n$, it makes sense to write $\TPro_\beta$ for the permutoric promotion operator $\TPro_\pi$, where $\pi\colon[n]\to\mathbb Z/n\mathbb Z$ is any bijection such that $\alpha_\pi=\beta$. 

A \dfn{source} (respectively, \dfn{sink}) of an acyclic orientation is a vertex of in-degree (respectively, out-degree) $0$. If $u$ is a source (respectively, sink), then we can \dfn{flip} $u$ into a sink (respectively, source) by reversing the orientations of all edges incident to $u$. Two acyclic orientations are \dfn{flip equivalent} if one can be obtained from the other by a sequence of flips. 

Let us say two maps $f,g\colon\Lambda_G\to\Lambda_G$ are \dfn{dynamically equivalent} if there is a bijection $\phi\colon\Lambda_G\to\Lambda_G$ such that $f\circ\phi=\phi\circ g$. Note that dynamically equivalent invertible maps have the same orbit structure (that is, they have the same number of orbits of each size). 

\begin{lemma}\label{lem:flip_equivalent}
If $\beta$ and $\beta'$ are acyclic orientations of $\Cycle_n$ that have the same number of edges oriented counterclockwise, then $\TPro_\beta$ and $\TPro_{\beta'}$ are dynamically equivalent. 
\end{lemma}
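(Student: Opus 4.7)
The plan is to prove the lemma in two stages: (i) show that a single flip at a source or sink of $\beta$ yields a dynamical equivalence between $\TPro_\beta$ and $\TPro_{\beta'}$ via conjugation by the corresponding toggle; (ii) show that any two acyclic orientations of $\Cycle_n$ with the same number of counterclockwise edges can be connected by a sequence of such flips. Composing the conjugating bijections from (i) along such a sequence will then produce the required bijection $\phi$.

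For stage (i), suppose $\beta'$ arises from $\beta$ by flipping a source $v$ of $\beta$ into a sink. Since both edges at $v$ point outward in $\beta$, $v$ is a minimal element of $(\ZZ/n\ZZ,\leq_\beta)$, so I would choose a bijection $\pi\colon[n]\to\ZZ/n\ZZ$ with $\alpha_\pi=\beta$ and $\pi(1)=v$, and then define $\pi'$ by $\pi'(i)=\pi(i+1)$ for $1\leq i\leq n-1$ and $\pi'(n)=v$. A direct check on the three types of edges (those not incident to $v$, the edge $\{v-1,v\}$, and the edge $\{v,v+1\}$) shows that $\alpha_{\pi'}=\beta'$. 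Writing
\[\TPro_\beta=\tau_{\pi(n)}\cdots\tau_{\pi(2)}\tau_v\qquad\text{and}\qquad\TPro_{\beta'}=\tau_v\tau_{\pi(n)}\cdots\tau_{\pi(2)},\]
associativity immediately yields $\TPro_{\beta'}\circ\tau_v=\tau_v\circ\TPro_\beta$, so $\phi=\tau_v$ exhibits the dynamical equivalence. The sink-to-source case produces the same identity, so it follows symmetrically.

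For stage (ii), I would analyze how a flip affects the set $S\subseteq E(\Cycle_n)$ of counterclockwise-oriented edges. A source $v$ of $\beta$ has $\{v-1,v\}$ oriented counterclockwise (from $v$ to $v-1$) and $\{v,v+1\}$ oriented clockwise, so flipping $v$ removes $\{v-1,v\}$ from $S$ and adjoins $\{v,v+1\}$; sink flips perform the reverse swap. Hence $|S|$ is a flip invariant, and the flips available at a given orientation correspond exactly to swapping cyclically adjacent edges of opposite orientation. Since acyclic orientations of $\Cycle_n$ are precisely those with $1\leq|S|\leq n-1$, a standard token-sliding argument on the cyclic binary string recording the indicator of $S$ connects any two $d$-subsets of $E(\Cycle_n)$ by such swaps, with every intermediate subset remaining a proper nonempty subset (and thus corresponding to an acyclic orientation). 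The only mildly delicate part of the argument is the bookkeeping in this sliding step; everything else reduces to the routine algebraic identity established in stage (i).
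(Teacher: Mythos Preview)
Your proof is correct and follows the same approach as the paper: reduce to a single source flip and observe that conjugation by the corresponding toggle $\tau_v$ intertwines $\TPro_\beta$ and $\TPro_{\beta'}$. The only difference is that the paper cites \cite{Develin} for the fact that acyclic orientations of $\Cycle_n$ with the same number of counterclockwise edges are flip equivalent, whereas you supply a direct token-sliding argument for this in your stage~(ii).
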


\begin{proof}
It is known (see \cite{Develin}) that two acyclic orientations of $\Cycle_n$ have the same number of edges oriented counterclockwise if and only if they are flip equivalent. Therefore, we just need to show that if $\beta$ and $\beta'$ are flip equivalent, then $\TPro_\beta$ and $\TPro_{\beta'}$ are dynamically equivalent. It suffices to prove this in the case when $\beta'$ is obtained from $\beta$ by flipping a source $i$ into a sink. In this case, one can check that $\TPro_\beta\circ\tau_i=\tau_i\circ\TPro_{\beta'}$. 
\end{proof}

\begin{lemma}\label{lem:counterclockwise_edges}
Let $\beta$ and $\beta'$ be acyclic orientations of $\Cycle_n$. Let $d$ and $d'$ be the number of edges oriented counterclockwise in $\beta$ and $\beta'$, respectively. If $d=d'$ or $d=n-d'$, then $\TPro_\beta$ and $\TPro_{\beta'}$ are dynamically equivalent. 
\end{lemma}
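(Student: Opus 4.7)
The plan is to reduce the case $d=n-d'$ to the case $d=d'$ that is already covered by \Cref{lem:flip_equivalent}. Let $\beta^{\rev}$ denote the acyclic orientation of $\Cycle_n$ obtained by reversing every edge of $\beta$; since reversal swaps clockwise and counterclockwise edges, $\beta^{\rev}$ has exactly $n-d$ counterclockwise edges. So in the case $d=n-d'$, the orientations $\beta^{\rev}$ and $\beta'$ have the same number of counterclockwise edges, and \Cref{lem:flip_equivalent} immediately gives that $\TPro_{\beta^{\rev}}$ and $\TPro_{\beta'}$ are dynamically equivalent. It therefore suffices to show that $\TPro_\beta$ and $\TPro_{\beta^{\rev}}$ are dynamically equivalent for any acyclic orientation $\beta$.

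For this, I would observe that $\TPro_{\beta^{\rev}}$ is the inverse of $\TPro_\beta$. Concretely, pick a bijection $\pi\colon[n]\to\ZZ/n\ZZ$ with $\alpha_\pi=\beta$, and let $\pi'$ be the reversed bijection defined by $\pi'(i)=\pi(n+1-i)$. The condition $\pi^{-1}(i)<\pi^{-1}(i+1)$ becomes $\pi'^{-1}(i)>\pi'^{-1}(i+1)$, so $\alpha_{\pi'}=\beta^{\rev}$. Writing out the two compositions,
\[
\TPro_\beta=\tau_{\pi(n)}\cdots\tau_{\pi(2)}\tau_{\pi(1)},\qquad \TPro_{\beta^{\rev}}=\TPro_{\pi'}=\tau_{\pi(1)}\tau_{\pi(2)}\cdots\tau_{\pi(n)},
\]
and telescoping using the fact that each toggle is an involution shows $\TPro_\beta\circ\TPro_{\beta^{\rev}}=\mathrm{id}$.

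To finish, I would invoke the elementary fact that an invertible map $f$ on a finite set is always dynamically equivalent to $f^{-1}$: in the symmetric group on $\Lambda_G$, the permutations $f$ and $f^{-1}$ have identical cycle types (just reverse each cycle), so they are conjugate. Applying this to $f=\TPro_\beta$ yields $\TPro_\beta$ dynamically equivalent to $\TPro_{\beta^{\rev}}$, and combining with the dynamic equivalence of $\TPro_{\beta^{\rev}}$ and $\TPro_{\beta'}$ established above completes the proof. The one spot that requires genuine care — rather than being entirely routine — is checking the identification $\alpha_{\pi'}=\beta^{\rev}$ and the telescoping computation that makes $\TPro_{\beta^{\rev}}$ the honest inverse of $\TPro_\beta$; everything else is an immediate appeal to \Cref{lem:flip_equivalent} and basic permutation conjugacy.
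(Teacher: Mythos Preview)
Your proof is correct, and it takes a genuinely different route from the paper's argument. In the case $d=n-d'$, the paper constructs an explicit conjugating bijection: the label-complement map $\phi$ defined by $(\phi(\sigma))(v)=n+1-\sigma(v)$, which satisfies $\phi\circ\tau_i=\tau_{n-i}\circ\phi$ and therefore conjugates $\TPro_\pi$ to $\TPro_{\pi''}$ where $\pi''(i)=n-\pi(i)$; one then checks that $\alpha_{\pi''}$ has $n-d$ counterclockwise edges. You instead reverse the \emph{order} of the toggles (rather than relabel), obtaining $\TPro_{\beta^{\rev}}=\TPro_\beta^{-1}$, and then appeal to the general fact that any permutation of a finite set is conjugate to its inverse. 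Your approach is slightly more conceptual and avoids writing down a specific $\phi$; the paper's approach has the minor advantage of exhibiting the conjugating map explicitly, though that map is not used elsewhere. Both arguments reduce to \Cref{lem:flip_equivalent} in the same way.
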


\begin{proof}
If $d=d'$, then we are done by \Cref{lem:flip_equivalent}. Now suppose $d=n-d'$. Define $\phi\colon\Lambda_G\to\Lambda_G$ by $(\phi(\sigma))(v)=n+1-\sigma(v)$. One can readily check that 
\begin{equation}\label{eq:confusion_phi}
\phi\circ\tau_i=\tau_{n-i}\circ\phi
\end{equation}
for all $i\in\ZZ/n\ZZ$. Let $\pi\colon[n]\to\ZZ/n\ZZ$ be a bijection such that $\alpha_\pi=\beta$, and define $\pi'\colon[n]\to\ZZ/n\ZZ$ by $\pi'(i)=n-\pi(i)$. We have $\TPro_\pi=\TPro_\beta$.  It follows from \eqref{eq:confusion_phi} that $\phi\circ\TPro_{\pi}=\TPro_{\pi'}\circ\phi$. This shows that $\TPro_\pi$ and $\TPro_{\pi'}$ are dynamically equivalent. On the other hand, the number of edges oriented counterclockwise in $\alpha_{\pi'}$ is $n-d$, so it follows from \Cref{lem:flip_equivalent} that $\TPro_{\pi'}$ is dynamically equivalent to $\TPro_{\beta'}$. 
\end{proof}

If $\pi\colon[n]\to\ZZ/n\ZZ$ is a bijection, then the number of cyclic descents of $\pi^{-1}$ is the same as the number of edges oriented counterclockwise in $\alpha_\pi$. This is why cyclic descents appear in \Cref{thm:main}. 

We end this section with a lemma that will allow us to rewrite operators formed as compositions of toggles. We will consider words over the alphabet $\{\tau_1,\ldots,\tau_n\}$ both as words and as permutations of $\Lambda_{G}$. Given such a word $X$, let $X\!\langle i\rangle$ denote the number of occurrences of $\tau_i$ in $X$. 

\begin{lemma}\label{lem:suffix}
Let $\beta$ be an acyclic orientation of $\Cycle_n$. Let $Y$ be a word over the alphabet $\{\tau_1,\ldots,\tau_n\}$ in which each letter appears exactly $k$ times. Suppose that for every suffix $X$ of $Y$ and every arrow $a\to b$ in $\beta$, we have $X\!\langle a\rangle-X\!\langle b\rangle\in\{0,1\}$. When viewed as a bijection from $\Lambda_G$ to itself, $Y$ is equal to $\TPro_\beta^k$. 
\end{lemma}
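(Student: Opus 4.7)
My plan is to introduce a finite poset $P_{\beta,k}$ whose linear extensions correspond bijectively to the words $Y$ satisfying the hypothesis of the lemma, then show that $\TPro_\beta^k$ itself arises from one of these linear extensions and that all of them represent the same operator on $\Lambda_G$.

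Define $P_{\beta,k}$ on the ground set $\{\tau_i^{(r)} : i \in \ZZ/n\ZZ,\ 1 \leq r \leq k\}$ by imposing the relations $\tau_a^{(r)} < \tau_b^{(r)}$ and $\tau_b^{(r)} < \tau_a^{(r+1)}$ for every arrow $a \to b$ in $\beta$ and every $r$ in the valid range. A word $Y$ determines a labeling $[kn] \to \{\tau_i^{(r)}\}$ by sending the $r$-th occurrence of $\tau_i$ (reading $Y$ from right to left, i.e., in the order the toggles are applied) to $\tau_i^{(r)}$, and the suffix condition $X\!\langle a\rangle - X\!\langle b\rangle \in \{0,1\}$ amounts exactly to saying that, for each arrow $a \to b$, the $r$-th $\tau_a$ precedes the $r$-th $\tau_b$, which in turn precedes the $(r+1)$-st $\tau_a$. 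The only subtle step is the implication ``suffix condition $\Rightarrow$ linear extension'': to see that $\tau_a^{(r)}$ must actually precede $\tau_b^{(r)}$ in the trajectory, consider the states immediately before and after $\tau_b^{(r)}$ is applied, where the count of $b$ is $r-1$ and $r$ respectively; applying the suffix condition to these two states pins down the count of $a$ to be exactly $r$ at that moment, so $\tau_a^{(r)}$ must already have been applied. A symmetric argument handles the relation $\tau_b^{(r)} < \tau_a^{(r+1)}$.

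I will then use the classical fact that any two linear extensions of a finite poset are connected by adjacent swaps of incomparable elements, and verify that two incomparable tagged toggles $\tau_c^{(r)}$ and $\tau_d^{(s)}$ must have $c = d$ or $c, d$ non-adjacent in $\Cycle_n$; otherwise there would be an arrow $c \to d$ or $d \to c$ in $\beta$, and chaining the generating relations (together with the derived relations $\tau_i^{(r)} < \tau_i^{(r+1)}$, which follow by passing through any neighbor of $i$ in $\Cycle_n$) would force comparability. Hence each adjacent swap in the word merely interchanges two commuting toggles, leaving the represented operator on $\Lambda_G$ unchanged. To finish, pick any $\pi$ with $\alpha_\pi = \beta$: the concatenation of $k$ copies of $\pi(1) \pi(2) \cdots \pi(n)$ places $\tau_i^{(r)}$ at position $(r-1)n + \pi^{-1}(i)$, and the required relations $\tau_a^{(r)} < \tau_b^{(r)}$ and $\tau_b^{(r)} < \tau_a^{(r+1)}$ translate to $\pi^{-1}(a) < \pi^{-1}(b)$ (true because $\alpha_\pi = \beta$) and $\pi^{-1}(b) < n + \pi^{-1}(a)$ (automatic), so $\TPro_\beta^k$ arises from a linear extension of $P_{\beta,k}$ and therefore equals $Y$ as an operator. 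The main obstacle is setting up the correspondence cleanly in the first step; once that is in place, the remaining arguments are short exercises in transitivity and commutation.
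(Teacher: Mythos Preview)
Your argument is correct and follows essentially the same approach as the paper's proof: both introduce a poset on ``tagged'' copies $\tau_i^{(r)}$ of the toggles, show that the suffix hypothesis on $Y$ and the canonical word for $\TPro_\beta^k$ each yield a linear extension of this poset, and then use that any two linear extensions are connected by swaps of incomparable elements, which here are always commuting toggles. The only cosmetic differences are that the paper numbers occurrences from the left (and correspondingly reverses the covering relations), and packages the commutation argument via an auxiliary group and a homomorphism to permutations of $\Lambda_G$; your write-up is otherwise more explicit than the paper's in justifying why the suffix condition forces the linear-extension property.
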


\begin{proof}
For each $i\in\ZZ/n\ZZ$, consider $k$ formal symbols ${\boldsymbol \tau}_i^{(1)},\ldots,{\boldsymbol \tau}_i^{(k)}$. Let $\mathcal G$ be the group generated by the set ${\bf A}=\left\{{\boldsymbol \tau}_i^{(\ell)}:i\in\ZZ/n\ZZ, \ell\in[k]\right\}$ subject to the relations ${\boldsymbol \tau}_{i}^{(\ell)}{\boldsymbol \tau}_{j}^{(m)}={\boldsymbol \tau}_{j}^{(m)}{\boldsymbol \tau}_{i}^{(\ell)}$ whenever $j\not\in\{i-1,i,i+1\}$. Let $\mathcal D$ be the directed graph with vertex set ${\bf A}$ and with arrows defined as follows: for each arrow $a\to b$ in $\beta$, the graph $\mathcal D$ has arrows ${\boldsymbol\tau}_b^{(\ell)}\to{\boldsymbol\tau}_a^{(\ell)}$ for all $\ell\in[k]$ and ${\boldsymbol\tau}_a^{(m)}\to{\boldsymbol\tau}_b^{(m+1)}$ for all $m\in[k-1]$. Let $({\bf A},\leq_{\mathcal D})$ be the transitive closure of $\mathcal D$. 

Fix a bijection $\pi\colon[n]\to\ZZ/n\ZZ$ such that $\TPro_\pi=\TPro_\beta$, and let $Y'$ be the word $(\tau_{\pi(n)}\cdots\tau_{\pi(1)})^k$ over the alphabet $\{\tau_1,\ldots,\tau_n\}$. Let $Z$ (respectively, $Z'$) be the word over the alphabet $\bf A$ obtained from $Y$ (respectively, $Y'$) by replacing the $\ell$-th occurrence of the letter $\tau_i$ with $\boldsymbol\tau_i^{(\ell)}$. The conditions on $Y$ in the hypothesis of the lemma imply that $Z$ is a linear extension of $({\bf A},\leq_{\mathcal D})$; the word $Y'$ satisfies the same conditions, so $Z'$ is also a linear extension of $({\bf A},\leq_{\mathcal D})$. This means that $Z'$ can be obtained from $Z$ be repeatedly swapping consecutive incomparable elements; each such swap corresponds to one of relations defining $\mathcal G$. Thus, $Z$ and $Z'$ represent the same element of $\mathcal G$. There is a natural homomorphism from $\mathcal G$ to the group of permutations of $\Lambda_G$ that sends each generator ${\boldsymbol \tau}_i^{(\ell)}$ to $\tau_i$. This homomorphism sends $Z$ and $Z'$ to the permutations of $\Lambda_G$ represented by $Y$ and $Y'$, respectively. Hence, these permutations are the same. This completes the proof because the permutation represented by $Y'$ is $\TPro_\beta^k$. 
\end{proof}

\section{Broken Promotion}\label{sec:broken}

In this section, we study the \emph{broken promotion} operators defined in the introduction, describing them in terms of ``gliding globs'' and relating them to permutoric promotion operators. 

\subsection{Jeu de Taquin}
As before, let us fix an $n$-vertex graph $G=(V,E)$. Our arguments in this section will require certain \dfn{jeu de taquin} operators defined as follows. For $i_1,i_2\in\mathbb Z/n\mathbb Z$, define 
$\jdt_{(i_1,i_2)}\colon\Lambda_G\to\Lambda_G$ by \[\jdt_{(i_1,i_2)}(\sigma)=\begin{cases} (i_1\,\,i_2)\circ \sigma & \mbox{if } \{\sigma^{-1}(i_1),\sigma^{-1}(i_2)\}\in E; \\ \sigma & \mbox{if } \{\sigma^{-1}(i_1),\sigma^{-1}(i_2)\}\not\in E.
\end{cases}\] Thus, $\jdt_{(i_1,i_2)}$ has the effect of trying to ``glide'' the label $i_1$ through the label $i_2$; it succeeds in doing so if and only if those labels are on adjacent vertices of $G$. More generally, if $(i_1,\ldots,i_r)$ is a tuple of distinct vertices in $V$, then we define \[\jdt_{(i_1,\ldots,i_r)}=\jdt_{(i_1,i_r)}\jdt_{(i_1,i_{r-1})}\cdots\jdt_{(i_1,i_2)}.\] This operator has the effect of trying to glide $i_1$ through the labels $i_2,\ldots,i_r$ in that order. We will primarily be interested in the case when $(i_1,\ldots,i_r)$ is such that $i_j=i_1+j-1$ for all $1\leq j\leq r$. In this case, $\{i_1,\ldots,i_r\}$ is a cyclic interval $[x,y]_n$, so we simply write $\jdt_{[x,y]_n}$ instead of $\jdt_{(i_1,\ldots,i_r)}$. 

\begin{example}
If $n=6$ and $\sigma$ is the labeling shown on the left in \Cref{Fig1}, then $\jdt_{[5,9]_6}(\sigma)=\jdt_{(5,6,1,2,3)}(\sigma)$ is the labeling shown on the right in \Cref{Fig1}. 
\end{example}

\begin{figure}[ht]
  \begin{center}{\includegraphics[height=1.65cm]{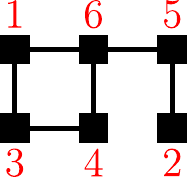}\qquad\qquad\includegraphics[height=1.65cm]{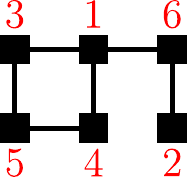}}
  \end{center}
  \caption{On the left is a labeling of a $6$-vertex graph, where the label of each vertex is shown next to it in red. On the right is the labeling $\jdt_{[5,9]_6}(\sigma)=\jdt_{(5,6,1,2,3)}(\sigma)$. The labeling $\jdt_{[5,9]_6}(\sigma)$ was obtained from $\sigma$ by gliding the label $5$ through the labels $6,1,3$.}\label{Fig1}
\end{figure}

\subsection{Broken Promotion}\label{subsec:Description1}
Suppose $B$ is a proper subset of $\mathbb Z/n\mathbb Z$. Recall the definitions of the \emph{broken promotion} operator $\Bro_B\colon\Lambda_G\to\Lambda_G$ and the \emph{cyclic shift} operator $\cyc\colon\Lambda_G\to\Lambda_G$ from \Cref{SecIntro}. We can explicitly describe the action of $\cyc\Bro_B$ on a labeling $\sigma\in\Lambda_G$ as follows. Let $B_1,\ldots,B_k$ be the vertex sets of the connected components of the subgraph of $\Cycle_n$ induced by $B$. For each $1\leq i\leq k$, let $x_i$ and $y_i$ be such that $B_i=[x_i,y_i-1]_n$, and imagine immersing the label $x_i$ in a glob of liquid. The first step is to apply the jeu de taquin operators $\jdt_{[x_i,y_i]_n}$, imagining that the label $x_i$ carries its glob along with it as it glides. For the second step, increase by $1$ the label of each vertex in $\sigma^{-1}((\ZZ/n\ZZ)\setminus\bigcup_{\ell=1}^k[x_\ell,y_\ell]_n)$. If $x_i-1\not\in \bigcup_{\ell=1}^k[x_\ell,y_\ell]_n$, this second step will change the label $x_i-1$ into $x_i$, so there will be two copies of the label $x_i$: one in a glob and the other not in a glob. The third and final step is to change each label $x_i$ that is in a glob to the label $y_i+1$. 

It might not be obvious at first that the procedure described in the preceding paragraph does in fact compute $\cyc\Bro_B(\sigma)$; however, the verification of this fact is straightforward and can be elucidated through examples.

\begin{example}
Suppose $n=9$ and $G=\Path_9$. Let $B=\{1,3,4,7,9\}\subseteq\ZZ/9\ZZ$. The connected components of the subgraph of $\Cycle_9$ induced by $B$ have vertex sets \[B_1=\{3,4\}=[3,4]_9, \quad B_2=\{7\}=[7,7]_9, \quad B_3=\{9,1\}=[9,10]_9.\] Preserving the notation from above, we have $x_1=3$, $y_1=5$, $x_2=7$, $y_2=8$, $x_3=9$, $y_3=11$. Recall that the vertices of $\Path_9$ are $v_1,\ldots,v_9$; let $\sigma\in\Lambda_{\Path_9}$ be the labeling that sends these vertices to $7,1,4,3,5,6,9,2,8$, respectively. \Cref{Fig2} illustrates the three-step procedure for computing $\cyc\Bro_B(\sigma)$, showing that $\cyc\Bro_B(\sigma)$ sends $v_1,\ldots,v_9$ to $9,1,6,4,5,7,2,3,8$, respectively. \hfill $\lozenge$ 
\end{example}

\begin{figure}[ht]
  \[\hphantom{\xrightarrow{\text{Step 2\,\,}}}\begin{array}{l}
  \includegraphics[height=0.686cm]{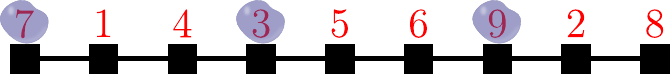}
  \end{array}\xrightarrow{\text{Step 1}}\begin{array}{l}
  \includegraphics[height=0.686cm]{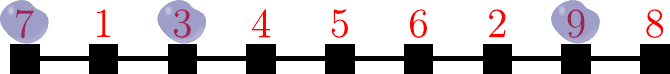}
  \end{array}\] \vspace{.2cm}
  \[\xrightarrow{\text{Step 2}}\begin{array}{l}
  \includegraphics[height=0.686cm]{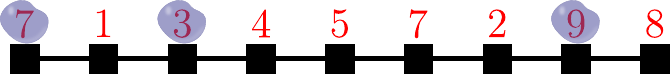}
  \end{array}\xrightarrow{\text{Step 3}}\begin{array}{l}
  \includegraphics[height=0.686cm]{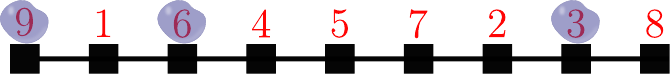}
  \end{array}\]\caption{The three steps for applying $\cyc\Bro_B$, where $B=\{1,3,4,7,9\}\subseteq\ZZ/9\ZZ$.}\label{Fig2}
\end{figure}

\subsection{Broken Promotion for the Complement of an Independent Set}\label{subsec:Description2}

Suppose $1\leq d\leq \left\lfloor n/2\right\rfloor$, and let $\cdots<s_{-1}<s_0<s_1<s_2<\cdots$ be a bi-infinite sequence of integers such that $s_{i+d}=s_i+n$ and $s_{i+1}\geq s_i+2$ for all $i\in\ZZ$. Then $\mathscr S=\{s_1,\ldots,s_d\}$ is an independent set of size $d$ in $\Cycle_n$. Let $\beta_{\mathscr S}$ be the acylic orientation of $\Cycle_n$ in which the elements of $\mathscr S$ are sources and all edges not incident to elements of $\mathscr S$ are oriented clockwise. The sinks of $\beta_{\mathscr S}$ are the elements of $\mathscr S-1:=\{s_1-1,\ldots,s_d-1\}$. Let us write $\mathscr R=(\ZZ/n\ZZ)\setminus(\mathscr S-1)$. 

In \Cref{subsec:Description1}, we gave a three-step description of the action of $\cyc\Bro_B$ on a labeling $\sigma$ when $B$ is an arbitrary proper subset of $\ZZ/n\ZZ$. This description simplifies when $B$ is $\mathscr R$ because in this case, we have $x_i=s_i$ and $y_i=s_{i+1}-1$, so $\bigcup_{\ell=1}^k[x_\ell,y_\ell]_n$ is all of $\ZZ/n\ZZ$ (so the second step in the earlier description has no effect). Hence, we have the following simpler two-step procedure. Immerse each label $s_i$ in a glob of liquid. The first step is to apply the jeu de taquin operators $\jdt_{[s_i,s_{i+1}-1]_n}$ (for $1\leq i\leq d$), imagining that the label $s_i$ carries its glob with it as it glides. The second step is to cyclically rotate the labels in the globs, changing each label $s_i$ to $s_{i+1}$ (modulo $n$).

\begin{example}
Suppose $n=9$ and $d=3$. Let $s_1=3$, $s_2=7$, $s_3=9$. Then $\mathscr S=\{3,7,9\}$, $\mathscr S-1=\{2,6,8\}$, and $\mathscr R=(\ZZ/9\ZZ)\setminus(\mathscr S-1)=\{1,3,4,5,7,9\}$. The first step in the above two-step procedure for applying $\cyc\Bro_{\mathscr R}$ is to immerse $3$, $7$, and $9$ in globs of liquid and apply $\jdt_{[3,6]_9}$, $\jdt_{[7,8]_9}$, and $\jdt_{[9,11]_9}$. The second step is to cyclically rotate the labels $3,7,9$. This is illustrated in \Cref{Fig3}. \hfill $\lozenge$
\end{example}

\begin{figure}[ht]
  \[\begin{array}{l}
  \includegraphics[height=0.686cm]{PermutoricPIC4}
  \end{array}\xrightarrow{\text{Step 1}}\begin{array}{l}
  \includegraphics[height=0.686cm]{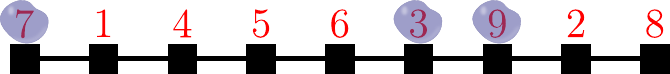}
  \end{array}\] \vspace{.2cm}
  \[\hphantom{\begin{array}{l}
  \includegraphics[height=0.686cm]{PermutoricPIC5}
  \end{array}}\xrightarrow{\text{Step 2}}\begin{array}{l}
  \includegraphics[height=0.686cm]{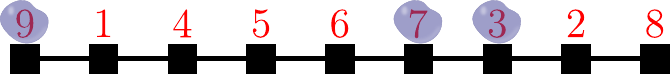}
  \end{array}\]\caption{The two steps for applying $\cyc\Bro_{\mathscr R}$, where $\mathscr R=\{1,3,4,5,7,9\}\subseteq\ZZ/9\ZZ$.}\label{Fig3}
\end{figure}

\begin{remark}\label{Rem:SameOrder}
Suppose $G=\Path_n$. Neither of the two steps in the above procedure change the relative order in which the labels in $(\ZZ/n\ZZ)\setminus\mathscr S$ (i.e., the labels not in the globs) appear from left to right along the path. For example, in \Cref{Fig3}, the labels in $(\ZZ/n\ZZ)\setminus\mathscr S$ are $1,2,4,5,6,8$. In every step of the procedure, these labels appear in the order $1,4,5,6,2,8$. \hfill $\triangle$
\end{remark}

\subsection{Permutoric Promotion and Broken Promotion}

The following lemma relates the operators $\cyc\Bro_{\mathscr R}$ and $\TPro_\beta$; it will later allow us to use the description of $\cyc\Bro_{\mathscr R}$ in terms of gliding globs given in \Cref{subsec:Description2} to gain a better understanding of permutoric promotion. 

\begin{lemma}\label{lem:commutes}
Let $\mathscr{S}$ be a $d$-element independent set in $\Cycle_n$, and let $\mathscr R=(\mathbb Z/n\mathbb Z)\setminus(\mathscr S-1)$. Then \[\cyc\Bro_{\mathscr R}\TPro_{\beta_{\mathscr S}}=\TPro_{\beta_{\mathscr S}}\cyc\Bro_{\mathscr R}.\]
\end{lemma}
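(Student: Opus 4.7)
The plan is to reduce the commutation to a pair of identities obtained from Lemma 2.5 (the suffix lemma).

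First, I would introduce the auxiliary permutations
\[T_s = \tau_{s_d}\tau_{s_{d-1}}\cdots\tau_{s_1} \qquad \text{and} \qquad T'_s = \tau_{s_d - 1}\tau_{s_{d-1} - 1}\cdots\tau_{s_1 - 1}.\]
Since $\mathscr S$ and $\mathscr S - 1$ are independent sets in $\Cycle_n$, the factors in each product pairwise commute, so these permutations are well-defined regardless of the ordering. A direct computation shows $\cyc\tau_j = \tau_{j+1}\cyc$ for every $j\in\ZZ/n\ZZ$; iterating yields $\cyc T'_s = T_s\cyc$, and commuting $\cyc$ past each toggle in $\Bro_{\mathscr R}$ gives $\cyc\Bro_{\mathscr R} = \Bro_{\mathscr R + 1}\cyc$, where $\mathscr R + 1 = (\ZZ/n\ZZ)\setminus\mathscr S$ has connected components $C_i = [s_i + 1, s_{i+1} - 1]_n$ in $\Cycle_n$.

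The crucial step is the double factorisation
\[\TPro_{\beta_{\mathscr S}} = T'_s\Bro_{\mathscr R} = \Bro_{\mathscr R + 1}T_s.\]
For the first equality I would consider the toggle word $W_1 = T'_s\Bro_{B_d}\Bro_{B_{d-1}}\cdots\Bro_{B_1}$, where $B_i = [s_i, s_{i+1} - 2]_n$ are the connected components of $\mathscr R$ in $\Cycle_n$; this equals $T'_s\Bro_{\mathscr R}$ because the factors $\Bro_{B_i}$ commute. Reading $W_1$ in application order (right-to-left in the composition) produces the sequence
\[s_1, s_1+1, \ldots, s_2-2,\; s_2, \ldots, s_3-2,\; \ldots,\; s_d, \ldots, s_1-2,\; s_1-1, s_2-1, \ldots, s_d-1,\]
and I would verify that this is a linear extension of the transitive closure of $\beta_{\mathscr S}$: within each clockwise chain $s_i, s_i + 1, \ldots, s_{i+1} - 1$ the elements appear in their natural order, and every source $s_i$ precedes both its chain and the sink $s_i - 1$ reached by the counterclockwise edge. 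Lemma 2.5 applied with $k = 1$ then identifies $W_1$ with $\TPro_{\beta_{\mathscr S}}$. The second equality follows symmetrically from the word $W_2 = \Bro_{C_1}\Bro_{C_2}\cdots\Bro_{C_d}T_s$, whose application order begins with $s_1, s_2, \ldots, s_d$ and then sweeps through the chains $C_d, C_{d-1}, \ldots, C_1$ in reverse order of index, again giving a linear extension of $\beta_{\mathscr S}$.

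With the double factorisation established, the lemma falls out of a short algebraic manipulation:
\begin{align*}
\TPro_{\beta_{\mathscr S}}\,\cyc\Bro_{\mathscr R}
&= \Bro_{\mathscr R + 1}\,T_s\,\cyc\Bro_{\mathscr R} \\
&= \Bro_{\mathscr R + 1}\,\cyc\,T'_s\Bro_{\mathscr R} \\
&= \cyc\Bro_{\mathscr R}\,T'_s\Bro_{\mathscr R} \\
&= \cyc\Bro_{\mathscr R}\,\TPro_{\beta_{\mathscr S}},
\end{align*}
where the first and last equalities invoke the double factorisation and the middle two use $T_s\cyc = \cyc T'_s$ and $\Bro_{\mathscr R + 1}\cyc = \cyc\Bro_{\mathscr R}$. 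The main obstacle is the double factorisation itself: the delicate part is the bookkeeping required to choose the order of the commuting components of $\Bro_{\mathscr R}$ and $\Bro_{\mathscr R + 1}$ so that the resulting global sequence respects every arrow of $\beta_{\mathscr S}$ — both the clockwise chain arrows running from each source $s_i$ toward the sink $s_{i+1} - 1$ and the counterclockwise shortcut arrows $s_i \to s_i - 1$. Once these two linear extensions have been pinned down, the suffix lemma does the rest and the conjugation identities close the argument.
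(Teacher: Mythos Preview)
Your proof is correct and is essentially the same as the paper's. The paper writes $T_s=\Bro_{\mathscr S}$, $T'_s=\Bro_{\mathscr S-1}$, and splits $\Bro_{\mathscr R}=\Bro_{\mathscr B}\Bro_{\mathscr S}$ (with $\mathscr B=\mathscr R\setminus\mathscr S$) to obtain directly the same two factorisations $\TPro_{\beta_{\mathscr S}}=\Bro_{\mathscr S-1}\Bro_{\mathscr R}=\Bro_{\mathscr R+1}\Bro_{\mathscr S}$ that you derive via linear extensions; the conjugation identities and the final four-line algebra are identical. Your appeal to Lemma~2.5 with $k=1$ is a slight over-citation---the discussion preceding it in Section~2 (that $\TPro_\pi$ depends only on $\alpha_\pi$) already gives what you need---but this does not affect correctness.
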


\begin{proof}
Preserve the notation from \Cref{subsec:Description2}. Let $\mathscr B=\mathscr R\setminus\mathscr S$. The vertex sets of the connected components of the subgraph of $\Cycle_n$ induced by $\mathscr R$ are $[s_1,s_2-2]_n,[s_2,s_3-2]_n,\ldots,[s_d,s_{d+1}-2]_n$, so \[\Bro_{\mathscr R}=\prod_{i=1}^d\Bro_{[s_i,s_{i+1}-2]_n}=\prod_{i=1}^d(\Bro_{[s_i+1,s_{i+1}-2]_n}\tau_{s_i})=\prod_{i=1}^d\Bro_{[s_i+1,s_{i+1}-2]_n}\prod_{i=1}^d\tau_{s_i}=\Bro_{\mathscr{B}}\Bro_{\mathscr S}.\] A similar computation shows that \[\cyc\Bro_{\mathscr R} \cyc^{-1}=\prod_{i=1}^d\Bro_{[s_i+1,s_{i+1}-1]_n}=\Bro_{\mathscr S-1}\Bro_{\mathscr B}.\] We also have $\cyc\Bro_{\mathscr S-1}\cyc^{-1}=\Bro_{\mathscr S}$. Therefore, 
\begin{align*}
\cyc\Bro_{\mathscr R}\Bro_{\mathscr S-1}&=(\cyc\Bro_{\mathscr R} \cyc^{-1})(\cyc\Bro_{\mathscr S-1}\cyc^{-1})\cyc \\
&=\Bro_{\mathscr S-1}(\Bro_{\mathscr B}\Bro_{\mathscr S}) \cyc \\ 
&=\Bro_{\mathscr S-1}\Bro_{\mathscr R}\cyc.
\end{align*} This shows that \[\cyc\Bro_{\mathscr R}\Bro_{\mathscr S-1}\Bro_{\mathscr R}=\Bro_{\mathscr S-1}\Bro_{\mathscr R}\cyc\Bro_{\mathscr R}.\] The desired result now follows from the observation that $\TPro_{\beta_{\mathscr S}}=\Bro_{\mathscr S-1}\Bro_{\mathscr R}$. 
\end{proof}

\subsection{Homomesy}\label{subsec:homomesy}
We end this section with a theorem about broken promotion that will be useful in \Cref{sec:orbit_broken} but that we also believe is interesting in its own right. This proposition concerns the notion of \emph{homomesy}, which Propp and Roby introduced in 2013 \cite{ProppRobyFPSAC,ProppRoby}; it is now one of the central focuses in dynamical algebraic combinatorics. Suppose $X$ is a finite set and $f\colon X\to X$ is an invertible map. Let $\Orb_f$ denote the set of orbits of $f$. A \dfn{statistic} on $X$ is a function $\text{stat}\colon X\to\mathbb R$. We say the statistic $\text{stat}$ is \dfn{homomesic} for $f$ with average $a$ if $\frac{1}{|\mathcal O|}\sum_{x\in\mathcal O}\text{stat}(x)=a$ for every orbit $\mathcal O\in\Orb_f$. 

\begin{proposition}\label{prop:homomesy}
Suppose $G$ is connected. Let $v$ be a vertex of $G$, and let $i\in\ZZ/n\ZZ$. Define $\mathbbm{1}_{v,i}\colon\Lambda_G\to\mathbb R$ by \[\mathbbm{1}_{v,i}(\sigma)=\begin{cases} 1 & \mbox{if } \sigma(v)=i; \\ 0 & \mbox{if } \sigma(v)\neq i. \end{cases}\] If $B\subseteq\ZZ/n\ZZ$ and $i-1\not\in B$, then $\mathbbm{1}_{v,i}$ is homomesic for the map $\cyc\Bro_B$ with average $1/n$. 
\end{proposition}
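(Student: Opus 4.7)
The plan is to prove the homomesy by showing that $\sum_{\sigma\in\mathcal O}\mathbbm{1}_{v,i}(\sigma)=|\mathcal O|/n$ for every orbit $\mathcal O$ of $\cyc\Bro_B$. A natural strategy is to exhibit $\mathbbm{1}_{v,i}-\tfrac{1}{n}$ as a coboundary of $\cyc\Bro_B$: a function $F\colon\Lambda_G\to\mathbb{R}$ with $\mathbbm{1}_{v,i}(\sigma)-\tfrac{1}{n}=F(\sigma)-F(\cyc\Bro_B(\sigma))$ for all $\sigma\in\Lambda_G$. Summing along any orbit then telescopes the right-hand side to $0$ and delivers the claim.

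The first step is to use the three-step gliding-globs description from \Cref{subsec:Description1} to identify, for each $\sigma$, the vertex of $G$ carrying label $i$ in $\cyc\Bro_B(\sigma)$. The hypothesis $i-1\notin B$ produces a clean dichotomy. If $i-2\notin B$, then label $i-1$ is untouched by Step 1 (it lies in neither a glob nor an interval $[x_\ell,y_\ell]_n$), and Step 2 increments it to $i$; hence the new carrier of label $i$ is $\sigma^{-1}(i-1)$. If $i-2\in B$, write $B_\ell$ for the component of $B$ containing $i-2$; then the Step 1 jeu de taquin slide transports the glob label $x_\ell$ to some vertex $w$, and Step 3 renames $x_\ell$ to $y_\ell+1=i$, so the new carrier of label $i$ is $w$.

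Next, I would construct $F$ as a rational-valued potential that tracks how ``far'' label $i$'s position is from the marker vertex $v$, calibrated to wind once around each orbit. Concretely, one sets $F(\sigma)=\tfrac{1}{n}\lambda(\sigma)$ where $\lambda$ is an integer-valued statistic engineered so that $\lambda(\sigma)-\lambda(\cyc\Bro_B(\sigma))$ equals $n-1$ when $\sigma(v)=i$ and $-1$ otherwise. In the case $i-2\notin B$, the increment of $\lambda$ is determined by the pair $(\sigma^{-1}(i),\sigma^{-1}(i-1))$ alone, and verifying the increment rule reduces to a straightforward case check on whether $\sigma^{-1}(i)$ equals $v$.

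The main obstacle is the case $i-2\in B$, where the Step 1 jeu de taquin slide of $x_\ell$ can wander through many intermediate vertices before terminating at the new carrier $w$ of label $i$; the trajectory depends on the entire labeling $\sigma$ and not merely on $\sigma^{-1}(i)$. To handle this, I would decompose the slide into its successive $\jdt_{(x_\ell,j)}$-swaps and show that each such swap contributes locally to $\lambda$ in a way that telescopes along the slide. The gliding-globs viewpoint is well-suited for this bookkeeping: because only labels in $B_\ell\cup\{y_\ell\}$ participate in the slide, the local contributions can be organized cyclically around the component $B_\ell$, leaving a clean net change that matches the coboundary identity. As a sanity check, the argument should specialize correctly to the trivial case $B=\emptyset$, where $\cyc\Bro_B=\cyc$ cycles labels uniformly and the homomesy is immediate.
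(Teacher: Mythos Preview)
Your coboundary strategy has a genuine gap: you never construct the potential $\lambda$. You write that $\lambda$ should be ``engineered so that $\lambda(\sigma)-\lambda(\cyc\Bro_B(\sigma))$ equals $n-1$ when $\sigma(v)=i$ and $-1$ otherwise,'' but this is a specification, not a definition. The existence of a function with those increments is \emph{equivalent} to the homomesy you are trying to prove: summed over an orbit of size $|\mathcal O|$ in which $\sigma(v)=i$ occurs $k$ times, the increments total $kn-|\mathcal O|$, and this vanishes precisely when $k=|\mathcal O|/n$. So unless you produce $\lambda$ explicitly and verify its increments, the argument is circular. Your heuristic ``how far label $i$'s position is from $v$'' does not yield a well-defined integer on a general connected graph $G$; there is no canonical linear coordinate to measure ``distance'' in a way that changes by exactly $-1$ at each step and jumps by $n-1$ when the label passes through $v$. (On a path one could try the index of the carrier vertex, but even there the increment under a jeu de taquin slide is not $\pm 1$ in general, and the proposition is stated for arbitrary connected $G$.)

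The paper's proof takes a completely different route that sidesteps the need for any potential. After reducing to $i=1$ (so that $n\notin B$), it assigns to each labeling $\sigma$ the acyclic orientation $\eta_\sigma$ of $G$ in which each edge points from the smaller label to the larger. The key observation is that every toggle $\tau_j$ with $j\in[n-1]$ preserves $\eta_\sigma$ (it only swaps consecutive labels on \emph{non}-adjacent vertices, so no edge comparison changes), hence $\eta_{\Bro_B(\sigma)}=\eta_\sigma$. Applying $\cyc$ then flips exactly one vertex from sink to source, namely the new carrier of label $1$. Tracking a single edge $\{x_0,y_0\}$ along an orbit, its orientation must return to itself after one period, so it flips an even number of times; since each flip corresponds to label $1$ landing on one endpoint, $x_0$ and $y_0$ receive label $1$ equally often. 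Connectivity of $G$ then forces every vertex to receive label $1$ the same number of times, namely $|\mathcal O|/n$. This parity-on-edges argument is what your proposal is missing; it replaces the elusive global potential with a local invariant (the orientation) whose evolution is easy to track.
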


\begin{proof}
By symmetry, it suffices to prove the result when $i=1$. We identify $\ZZ/n\ZZ$ with $[n]$ and consider the total ordering $1<2<\dots<n$. Given a labeling $\sigma\in\Lambda_G$, we obtain an acyclic orientation $\eta_\sigma$ by orienting each edge $\{x,y\}$ from $x$ to $y$ if and only if $\sigma(x)<\sigma(y)$. Observe that $\eta_{\tau_j(\sigma)}=\eta_\sigma$ for every $j\in[n-1]$ and $\sigma\in\Lambda_G$; since $n=i-1\not\in B$, this implies that $\eta_{\Bro_B(\sigma)}=\eta_\sigma$ for every $\sigma\in\Lambda_G$. It is also straightforward to see that $\eta_{\cyc(\sigma)}$ is obtained from $\eta_\sigma$ by flipping the vertex $(\cyc(\sigma))^{-1}(1)$ from a sink to a source. Therefore, $\eta_{\cyc(\Bro_B(\sigma))}$ is obtained from $\eta_\sigma$ by flipping $(\cyc(\Bro_B(\sigma)))^{-1}(1)$ from a sink to a source. 

Let $\mathcal O$ be an orbit of $\cyc\Bro_B$, and fix $\mu_0\in\mathcal O$. Let $\mu_t=(\cyc\Bro_B)^t(\mu_0)$ for all $t\in\ZZ$. Consider an edge $\{x_0,y_0\}$ in $G$. Let $\cdots<k(0)<k(1)<k(2)<\cdots$ be the integers such that $\mu_{k(j)}^{-1}(1)\in\{x,y\}$. Without loss of generality, assume $x_0\to y_0$ is an arrow in $\eta_{\mu_{k(0)}}$. According to the previous paragraph, the orientations of $\{x_0,y_0\}$ are different in $\eta_{\mu_{t-1}}$ and $\eta_{\mu_t}$ if and only if $t\in\{\ldots,k(0),k(1),k(2),\ldots\}$. Moreover, we have $\mu_{k(j)}(x_0)=1$ if $x_0\to y_0$ is an arrow in $\eta_{\mu_{k(j)}}$, whereas $\mu_{k(j)}(y_0)=1$ if $y_0\to x_0$ is an arrow in $\eta_{\mu_{k(j)}}$. It follows that for $t\in\ZZ$, we have $\mu_t(x_0)=1$ if and only if $t=k(j)$ for some even $j$; similarly, $\mu_t(y_0)=1$ if and only if $t=k(j)$ for some odd $j$. This shows that the number of labelings in $\mathcal O$ that send $x_0$ to $1$ is the same as the number of labelings in $\mathcal O$ that send $y_0$ to $1$. Because the edge $\{x_0,y_0\}$ was arbitrary and $G$ is connected, it follows that for any two vertices $x$ and $y$ of $G$, the number of labelings in $\mathcal O$ that send $x$ to $1$ is the same as the number of labelings in $\mathcal O$ that send $y$ to $1$. This implies the desired result. 
\end{proof}

\begin{example}
Suppose $n=5$ and $B=\{1,3,4\}$. \Cref{Fig4} depicts an orbit of $\cyc\Bro_{B}$ for a particular choice of a graph $G$. Select an arbitrary vertex $v$ of $G$. As predicted by \Cref{prop:homomesy}, exactly $1$ of the $5$ labelings in this orbit sends $v$ to $1$, and exactly $1$ of the $5$ labelings in the orbit sends $v$ to $3$. \hfill $\lozenge$   
\end{example}

\begin{figure}[ht]
  \begin{center}{\includegraphics[height=4.7cm]{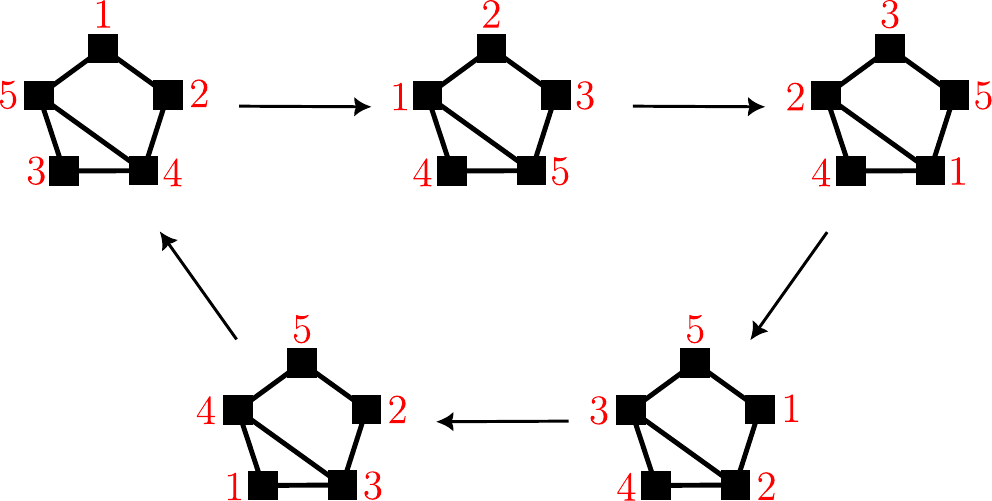}}
  \end{center}
  \caption{An orbit of $\cyc\Bro_{\{1,3,4\}}$ for a particular graph $G$. Note that each of the labels $1$ and $3$ appears on each vertex exactly once throughout the orbit. }\label{Fig4}
\end{figure}

\section{Broken Promotion on a Path}\label{sec:Broken_Path}
Throughout the rest of the article, we will specialize to the case when $G=\Path_n$ is the path with $n$ vertices. 

Suppose $1\leq d\leq\lfloor n/2\rfloor$. In \Cref{sec:broken}, we considered an arbitrary bi-infinite sequence \[\cdots<s_{-1}<s_0<s_1<s_2<\cdots\] satisfying $s_{i+d}=s_i+n$ and $s_{i+1}\geq s_i+2$ for all $i\in\ZZ$. In this section, we specialize our attention to a particular sequence. We write $[[x]]$ for the integer closest to a real number $x$, with the convention that $[[x]]=x-1/2$ if $x-1/2\in \ZZ$. For $i\in\ZZ$, let $s_i=[[in/d]]$. As in \Cref{sec:broken}, we let $\mathscr S$ be the independent set $\{s_1,\ldots,s_d\}$ of $\Cycle_n$ and set $\mathscr R=(\ZZ/n\ZZ)\setminus(\mathscr S-1)$. Let $\beta=\beta_{\mathscr S}$ be the acyclic orientation of $\Cycle_n$ whose sources are the elements of $\mathscr S$ and whose sinks are the elements of $\mathscr S-1$. Then $\beta$ has exactly $d$ counterclockwise edges. 

In what follows, when we consider the size of the intersection of a multiset with a set, we count the elements according to their multiplicity in the multiset. For example, in the next proposition, $|[j-q,j-1]_n\cap(\mathscr S-1)|$ should be interpreted as the number of elements of $[j-q,j-1]_n$, counted with multiplicity, that are also elements of the set $\mathscr S-1$. 

\begin{proposition}\label{prop:TProcycBro_R}
Let $\gamma,q,r$ be nonnegative integers such that $0\leq r\leq n-d-1$ and $\gamma n=q(n-d)+r$. Let $J=\{j\in[n]:q-\gamma+1\leq |[j-q,j-1]_n\cap(\mathscr S-1)|\}$. Then $|J|=r$, and $\TPro_\beta^\gamma=\cyc^{-q}\Bro_J(\cyc\Bro_{\mathscr R})^q$.
\end{proposition}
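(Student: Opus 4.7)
The plan is to invoke \Cref{lem:suffix} after first rewriting the right-hand side purely in terms of toggles. The identity $\cyc\tau_j\cyc^{-1}=\tau_{j+1}$ (equivalently $\tau_j\cyc=\cyc\tau_{j-1}$) follows directly from the definitions of $\cyc$ and $\tau_j$. A short induction on $q$ then yields $(\cyc\Bro_{\mathscr R})^q=\cyc^q(\Bro_{\mathscr R})_{-(q-1)}(\Bro_{\mathscr R})_{-(q-2)}\cdots(\Bro_{\mathscr R})_{0}$, where $(\Bro_B)_{-k}$ denotes the word obtained from $\Bro_B$ by replacing each $\tau_j$ with $\tau_{j-k}$ (this is the same word as $\Bro_{B-k}$, since the cyclic shift preserves the component structure in $\Cycle_n$). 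Conjugating $\Bro_J$ by $\cyc^q$ similarly gives
\[
\cyc^{-q}\Bro_J(\cyc\Bro_{\mathscr R})^q \;=\; (\Bro_J)_{-q}\,(\Bro_{\mathscr R})_{-(q-1)}\cdots(\Bro_{\mathscr R})_{-1}\,(\Bro_{\mathscr R})_{0} \;=:\; Y,
\]
a word in the toggles alone. It suffices to show $Y=\TPro_\beta^\gamma$.

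Next I would verify that each letter $\tau_i$ appears in $Y$ exactly $\gamma$ times. Since $\mathscr R$ and $\mathscr S-1$ partition $\ZZ/n\ZZ$, counting directly and substituting $j=i+q$ yields
\[
Y\langle i\rangle \;=\; [j\in J] \,+\, q \,-\, |[j-q,j-1]_n\cap(\mathscr S-1)|.
\]
Requiring $Y\langle i\rangle=\gamma$ for every $j$ matches the definition of $J$ precisely, \emph{provided} $|[j-q,j-1]_n\cap(\mathscr S-1)|\in\{q-\gamma,\,q-\gamma+1\}$ for every $j$. This is the Beatty/three-distance-type equidistribution property of the sequence $s_i=[[in/d]]$: every length-$q$ window of $\ZZ/n\ZZ$ meets $\mathscr S$ (equivalently $\mathscr S-1$) in either $\lfloor qd/n\rfloor$ or $\lceil qd/n\rceil$ positions, and since $qd=(q-\gamma)n+r$ with $0\le r<n$, these two values are exactly $q-\gamma$ and $q-\gamma+1$. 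Summing $|[j-q,j-1]_n\cap(\mathscr S-1)|$ over $j\in[n]$ then gives $qd=(q-\gamma)n+r$, while the same sum equals $(q-\gamma)n+|J|$; comparing forces $|J|=r$.

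The main obstacle is verifying the suffix condition of \Cref{lem:suffix}: for every arrow $a\to b$ of $\beta=\beta_{\mathscr S}$ and every suffix $X$ of $Y$, we need $X\langle a\rangle-X\langle b\rangle\in\{0,1\}$. The arrows of $\beta_{\mathscr S}$ come in two families, (A) $k\to k+1$ for $k\in\mathscr R$, and (B) $s_i\to s_i-1$ for $i\in[d]$. Within each block $(\Bro_{\mathscr R})_{-k'}$ the toggles of a single shifted component $[s_i-k',\,s_{i+1}-2-k']_n$ are listed in descending order from left to right, so whenever both $\tau_a$ and $\tau_b$ appear in the same block they appear in the correct order. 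The delicate step is controlling the prefix-count imbalance between blocks when $a+k'$ and $b+k'$ straddle a boundary between $\mathscr R$ and $\mathscr S-1$: combining the Beatty property of $\mathscr S-1$ with the defining inequality of $J$, one shows that the cumulative discrepancy contributed by blocks containing only one of $\tau_a,\tau_b$ never exceeds $1$ and is exactly corrected by the leading $(\Bro_J)_{-q}$ block. Once this case analysis is complete, \Cref{lem:suffix} yields $Y=\TPro_{\beta_{\mathscr S}}^\gamma=\TPro_\beta^\gamma$, establishing the proposition.
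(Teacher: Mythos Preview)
Your approach is essentially the paper's: rewrite $\cyc^{-q}\Bro_J(\cyc\Bro_{\mathscr R})^q$ as a toggle word $Y$, use the Beatty-type equidistribution of $\mathscr S-1$ to see that each $\tau_i$ occurs exactly $\gamma$ times (hence $|J|=r$), and then invoke \Cref{lem:suffix}. The paper fills in the part you call ``delicate'' by first isolating a structural fact about $J$ (\Cref{Lem:R1}: $J\cap(\mathscr S-1)=\emptyset$ and $J$ is downward-closed within each $\mathscr R$-component, so the word for $\Bro_J$ is literally an initial segment of the word for $\Bro_{\mathscr R}$) and then reducing both arrow types to the single inequality $(\mathscr S-1)(\ell)+Q(\ell)-Q(\ell+1)\in\{0,1\}$, where $Q\subseteq\mathscr R$ records which toggles of the partial block have been read; your sketch is correct in spirit but needs exactly these two ingredients to go through.
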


\Cref{prop:TProcycBro_R} will be crucial in the next section when we prove that the sizes of the orbits of $\TPro_\beta$ are all divisible by $\lcm(d,n-d)$. Before proving this proposition, we need a technical lemma.

\begin{lemma}\label{Lem:R1}
Let $\gamma,q,r,J$ be as in \Cref{prop:TProcycBro_R}. We have $J\cap(\mathscr S-1)=\emptyset$. Also, if $i\in\mathscr R$ and $i+1\in J$, then $i\in J$. 
\end{lemma}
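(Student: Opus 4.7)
The plan is to prove the two parts separately. The second assertion is a short comparison: the multisets $[i-q,i-1]_n$ and $[i+1-q,i]_n$ differ only in that the first contains $(i-q)\bmod n$ where the second contains $i\bmod n$, so
\[
|[i-q,i-1]_n\cap(\mathscr S-1)|-|[i+1-q,i]_n\cap(\mathscr S-1)|=\mathbbm{1}[(i-q)\bmod n\in\mathscr S-1]-\mathbbm{1}[i\bmod n\in\mathscr S-1].
\]
The hypothesis $i\in\mathscr R=(\ZZ/n\ZZ)\setminus(\mathscr S-1)$ kills the second indicator, so $|[i-q,i-1]_n\cap(\mathscr S-1)|\geq|[i+1-q,i]_n\cap(\mathscr S-1)|\geq q-\gamma+1$, and hence $i\in J$.

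For the first assertion, $J\cap(\mathscr S-1)=\emptyset$, I would translate membership in $J$ into a fractional-part inequality. Using the identity $s_i=\lceil in/d-1/2\rceil$, a short verification shows
\[
|[j-q,j-1]_n\cap(\mathscr S-1)|=\lfloor\alpha_j\rfloor-\lfloor\alpha_j-qd/n\rfloor,\qquad\alpha_j:=\frac{d(2j+1)}{2n}.
\]
The defining relation $\gamma n=q(n-d)+r$ gives $qd/n=(q-\gamma)+r/n$, and pulling out the integer $q-\gamma$ rewrites the count as $(q-\gamma)+\bigl(\lfloor\alpha_j\rfloor-\lfloor\alpha_j-r/n\rfloor\bigr)$. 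Since $0\leq r/n<1$, the last difference is $0$ or $1$, equalling $1$ precisely when $\{\alpha_j\}<r/n$. Therefore $j\in J\iff\{\alpha_j\}<r/n$.

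It remains to show that $\{\alpha_{s_k-1}\}\geq r/n$ for every $k$. Substituting $j=s_k-1$ gives $\alpha_j=d(2s_k-1)/(2n)$, and splitting into the two cases dictated by the $[[\cdot]]$ convention (whether or not $kn/d-1/2$ is an integer) yields $\alpha_j\in[k-d/n,k)$ in both cases. Hence $\{\alpha_j\}\geq 1-d/n=(n-d)/n$. Since $r\leq n-d-1$ forces $r/n<(n-d)/n$, we obtain $\{\alpha_j\}>r/n$, and so $j\notin J$. The hardest part is the initial translation from the multiset count to the fractional-part condition; the case analysis handling the $[[\cdot]]$ convention is then routine.
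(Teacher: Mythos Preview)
Your proof is correct. The argument for the second assertion is essentially identical to the paper's: both compare the multisets $[i-q,i-1]_n$ and $[i+1-q,i]_n$, observe they differ only at the endpoints, and use $i\notin\mathscr S-1$ to conclude.

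For the first assertion, your route differs from the paper's. The paper argues directly that for $j=s_k-1$ one has
\[
|[s_k-1-q,s_k-2]_n\cap(\mathscr S-1)|=|[s_k-1-q,s_k-1]_n\cap(\mathscr S-1)|-1\leq\frac{(q+1)d}{n},
\]
and then adds $(n-d-r)/n>0$ to push this strictly below $q-\gamma+1$. Your approach instead derives the exact closed form $\lfloor\alpha_j\rfloor-\lfloor\alpha_j-qd/n\rfloor$ for the count, extracts the integer $q-\gamma$ from $qd/n$, and reduces membership in $J$ to the clean criterion $\{\alpha_j\}<r/n$. You then locate $\alpha_{s_k-1}$ in $[k-d/n,k)$, giving $\{\alpha_{s_k-1}\}\geq(n-d)/n>r/n$. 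This is a genuinely different decomposition: the paper bounds the count from above and never isolates a characterization of $J$, whereas you obtain an if-and-only-if description of $J$ as a by-product. Your version is more explicit and yields extra information (the fractional-part criterion could be reused elsewhere); the paper's version is shorter and avoids the case split on the $[[\cdot]]$ tie-breaking rule. Both ultimately hinge on the same inequality $r\leq n-d-1$.
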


\begin{proof}
Consider some $s_j-1\in\mathscr S-1$. Recall that $s_j=[[jn/d]]$. It is straightforward to check that \[|[s_j-1-q,s_j-2]_n\cap(\mathscr S-1)|=|[s_j-1-q,s_j-1]_n\cap(\mathscr S-1)|-1\leq \frac{(q+1)d}{n}.\] Because $r\leq n-d-1$, we have
\[|[s_j-1-q,s_j-2]_n\cap(\mathscr S-1)|<\frac{(q+1)d}{n}+\frac{n-d-r}{n}=q-\frac{q(n-d)+r}{n}+1=q-\gamma+1,\] so $s_j-1\not\in J$. This proves that $J\cap(\mathscr S-1)=\emptyset$. 

Now suppose $i\in\mathscr R=(\ZZ/n\ZZ)\setminus(\mathscr S-1)$ and $i+1\in J$. We have \[|[i-q,i-1]_n\cap(\mathscr S-1)|\geq |[(i+1)-q,(i+1)-1]_n\cap(\mathscr S-1)|\geq q-\gamma+1,\] so $i\in J$. 
\end{proof}

\begin{proof}[Proof of \Cref{prop:TProcycBro_R}]
As in \Cref{lem:suffix}, we will consider words over the alphabet $\{\tau_1,\ldots,\tau_n\}$ both as words and as permutations of $\Lambda_{\Path_n}$. Given such a word $X$, recall that we write $X\!\langle i\rangle$ for the number of occurrences of $\tau_i$ in $X$. 

Let $i_1,\ldots,i_{n-d}$ be an ordering of the elements of $\mathscr R$ such that $\Bro_{\mathscr R}=\tau_{i_{n-d}}\cdots\tau_{i_1}$. Consider the word \[W=\tau_{i_{n-d}-(q-1)}\cdots\tau_{i_1-(q-1)}\tau_{i_{n-d}-(q-2)}\cdots\tau_{i_1-(q-2)}\cdots\tau_{i_{n-d}-1}\cdots\tau_{i_1-1}\tau_{i_{n-d}}\cdots\tau_{i_1}.\] For each $i\in\ZZ/n\ZZ$, we have $W\!\langle i\rangle=q-|[i,i+q-1]_n\cap(\mathscr S-1)|$. The size of \[[i,i+q-1]_n\cap(\mathscr S-1)=[i,i+q-1]_n\cap\{[[n/d]]-1,[[2n/d]]-1,\ldots,[[dn/d]]-1\}\] must be $\left\lfloor qd/n\right\rfloor$ or $\left\lceil qd/n\right\rceil$. Using the identity $q(n-d)=\gamma n-r$, we find that $W\!\langle i\rangle\in\{\gamma-1,\gamma\}$ for all $i\in\ZZ/n\ZZ$. The total number of toggle operators in $W$ is $q(n-d)=\gamma n-r$, so there are exactly $r$ elements $i\in\ZZ/n\ZZ$ such that $W\!\langle i\rangle=\gamma-1$. Furthermore, we have $W\!\langle i\rangle=\gamma-1$ if and only if $i+q\in J$. This proves that $|J|=r$ and that $q-|[j-q,j-1]_n\cap(\mathscr S-1)|=\gamma-1$ for every $j\in J$.

It follows from \Cref{Lem:R1} that we can choose the ordering $i_1,\ldots,i_{n-d}$ so that $\Bro_J=\tau_{i_r}\cdots\tau_{i_1}$. For each $k\in\mathbb Z$, we have $\cyc^{-k}\Bro_{\mathscr R}\cyc^k=\tau_{i_{n-d}-k}\cdots\tau_{i_1-k}$. Thus, when we view $W$ as a permutation of $\Lambda_{\Path_n}$, it is equal to \[(\cyc^{-(q-1)}\Bro_{\mathscr R}\cyc^{q-1})\cdots (\cyc^{-1}\Bro_{\mathscr R}\cyc)\Bro_{\mathscr R}=\cyc^{-q}(\cyc\Bro_{\mathscr R})^q.\] When we view the word $W'=\tau_{i_r-q}\cdots\tau_{i_1-q}$ as a permutation, it is equal to $\cyc^{-q}\Bro_J \cyc^q$, so $W'W=\cyc^{-q}\Bro_J (\cyc\Bro_{\mathscr R})^q$. Every toggle operator $\tau_i$ occurs exactly $\gamma$ times in the word $W'W$. Our goal is to prove that the permutation $W'W$ of $\Lambda_{\Path_n}$ is equal to $\TPro_\beta^\gamma$. Setting $Y=W'W$ in \Cref{lem:suffix}, we find that it suffices to show that if $X$ is a suffix of $W'W$ and $a\to b$ is an arrow in $\beta$, then $X\!\langle a\rangle-X\!\langle b\rangle\in\{0,1\}$. 

Given $A\subseteq\ZZ/n\ZZ$ and $i\in\ZZ/n\ZZ$, let \[A(i)=\begin{cases} 1 & \mbox{if } i\in A; \\ 0 & \mbox{if } i\not\in A. \end{cases}\] Let $X$ be a suffix of $W'W$, and write $|X|=k(n-d)+m$ for some nonnegative integers $k$ and $m$ with $0\leq m\leq n-d-1$. Then \[X=\tau_{i_m-k}\cdots\tau_{i_1-k}\tau_{i_{n-d}-(k-1)}\cdots\tau_{i_1-(k-1)}\cdots\tau_{i_{n-d}-1}\cdots\tau_{i_1-1}\tau_{i_{n-d}}\cdots\tau_{i_1}.\] Let $Q=\{i_1,\ldots,i_m\}\subseteq\mathscr R$. Let $a\to b$ be an arrow in $\beta$; we want to show that $X\!\langle a\rangle-X\!\langle b\rangle\in\{0,1\}$. To do this, let us first prove that 
\begin{equation}\label{Eq:SQ}
(\mathscr S-1)(\ell)+Q(\ell)-Q(\ell+1)\in \{0,1\}\text{ for all }\ell\in\ZZ/n\ZZ.
\end{equation}
Because $(\mathscr S-1)\cap Q=\emptyset$, we must have $(\mathscr S-1)(\ell)+Q(\ell)-Q(\ell+1)\leq 1$. Suppose by way of contradiction that $(\mathscr S-1)(\ell)+Q(\ell)-Q(\ell+1)<0$. Then $(\mathscr S-1)(\ell)=Q(\ell)=0$ and $Q(\ell+1)=1$. This implies that $\ell+1$ is not in the set $\mathscr S$ of sources of $\beta$, so there is an arrow $\ell\to \ell+1$ in $\beta$. Hence, $\ell$ appears before $\ell+1$ in the ordering $i_1,\ldots,i_{n-d}$. Since $\ell+1\in Q=\{i_1,\ldots,i_m\}$, this forces $\ell\in Q$, which is a contradiction. 

We can now prove that $X\!\langle a\rangle-X\!\langle b\rangle\in\{0,1\}$; we consider two cases. 

\medskip 

\noindent{\bf Case 1.} Suppose $b=a+1$. In this case, $X\!\langle a\rangle=k-|[a,a+k-1]_n\cap(\mathscr S-1)|+Q(a+k)$ and $X\!\langle b\rangle=k-|[a+1,a+k]_n\cap(\mathscr S-1)|+Q(a+k+1)$, so \[X\!\langle a\rangle - X\!\langle b\rangle=-(\mathscr S-1)(a)+(\mathscr S-1)(a+k)+Q(a+k)-Q(a+k+1).\] Because $a\to a+1$ is an arrow in $\beta$, we know that $(\mathscr S-1)(a)=0$. If we set $\ell=a+k$ in \eqref{Eq:SQ}, we find that $X\!\langle a\rangle-X\!\langle b\rangle=(\mathscr S-1)(a+k)+Q(a+k)-Q(a+k+1)\in\{0,1\}$. 

\medskip 

\noindent {\bf Case 2.} Suppose $b=a-1$. In this case, $X\!\langle a\rangle=k-|[a,a+k-1]_n\cap(\mathscr S-1)|+Q(a+k)$ and $X\!\langle b\rangle=k-|[a-1,a+k-2]_n\cap(\mathscr S-1)|+Q(a+k-1)$, so \[X\!\langle a\rangle - X\!\langle b\rangle=(\mathscr S-1)(a-1)-(\mathscr S-1)(a+k-1)+Q(a+k)-Q(a+k-1).\]
Because $a\to a-1$ is an arrow in $\beta$, it follows from the definition of $\beta$ that $(\mathscr S-1)(a-1)=1$. If we set $\ell=a+k-1$ in \eqref{Eq:SQ}, we find that $(\mathscr S-1)(a+k-1)+Q(a+k-1)-Q(a+k)\in\{0,1\}$. Therefore, $X\!\langle a\rangle-X\!\langle b\rangle =1-((\mathscr S-1)(a+k-1)+Q(a+k-1)-Q(a+k))\in\{0,1\}$.
\end{proof}

\section{Divisibility of Permutoric Promotion Orbit Sizes}\label{sec:divisibility}

Our goal in this section is to prove the following proposition. 
\begin{proposition}\label{prop:divisibility}
If $\beta$ is an acyclic orientation of $\Cycle_n$ with $d$ counterclockwise edges, then every orbit of $\TPro_\beta$ has size divisible by $\lcm(d,n-d)$. 
\end{proposition}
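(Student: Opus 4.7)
The plan is to reduce to Section 4's concrete framework, combine the global identity $T^{n-d} = U^n$ (from \Cref{prop:TProcycBro_R}) with the homomesy divisibility $n \mid k'$ (from \Cref{prop:homomesy}), and then leverage additional structural identities to upgrade the resulting relation to the full divisibility $\lcm(d, n-d) \mid k$.

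Since $\lcm(d, n-d)$ is symmetric in $d$ and $n-d$, \Cref{lem:counterclockwise_edges} lets me assume $1 \leq d \leq \lfloor n/2 \rfloor$ and $\beta = \beta_{\mathscr{S}}$ for the specific independent set from Section 4. Let $T = \TPro_\beta$, $U = \cyc\Bro_{\mathscr{R}}$, and $g = \gcd(d, n-d) = \gcd(d, n)$. Applying \Cref{prop:TProcycBro_R} with $\gamma = n-d$ (so $q = n$, $r = 0$, $J = \emptyset$) gives $T^{n-d} = U^n$. Any $i \in \mathscr{S}$ satisfies $i - 1 \notin \mathscr{R}$, so \Cref{prop:homomesy} ensures every $U$-orbit size $k'$ is divisible by $n$. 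If $k$ is the $T$-orbit size of some fixed $\sigma$, comparing the orders of $T^{n-d}$ and $U^n$ on the orbit of $\sigma$ yields the core relation
\[
\frac{k}{\gcd(k, n-d)} = \frac{k'}{n}.
\]

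To upgrade this to $\lcm(d, n-d) \mid k$, I would exploit two further ingredients drawn from \Cref{prop:TProcycBro_R}. First, because $s_i = [[in/d]]$, the set $\mathscr{S}$ (and hence $\mathscr{R}$ and $\mathscr{S}-1$) is invariant under translation by $n/g$, so $\cyc^{n/g}$ commutes with both $T$ and $U$; together with the finer identity $T^{(n-d)/g} = \cyc^{-n/g} U^{n/g}$ coming from \Cref{prop:TProcycBro_R} at $\gamma = (n-d)/g$, this places strong constraints on the stabilizer of $\sigma$ inside the abelian group $\langle T, U, \cyc^{n/g}\rangle$. Second, the case $\gamma = 1$ of \Cref{prop:TProcycBro_R} yields the dual identity $T = \cyc^{-1}\Bro_{\mathscr{S}} U$, in which $\Bro_{\mathscr{S}}$ is an involution because $\mathscr{S}$ is independent. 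The plan is to use these together to force both $(n-d) \mid k$ and $d \mid k$ separately: the former from a lattice-theoretic stabilizer computation in $\langle T, U, \cyc^{n/g}\rangle$ that combines the core relation with the $\cyc^{n/g}$-compatibility, and the latter from a conjugation by $\Bro_{\mathscr{S}}$ that transfers the argument to a dual setting in which the roles of $d$ and $n-d$ are effectively interchanged.

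The principal obstacle is that the core relation together with $n \mid k'$ is by itself insufficient to force $\lcm(d, n-d) \mid k$, so the additional commuting $\cyc^{n/g}$-action and the dual identity are essential. I expect divisibility by $n-d$ to follow fairly directly from a lattice computation in $\ZZ^2$ once we enforce that the $T$-orbit respects the $\cyc^{n/g}$-action. Divisibility by $d$ is likely the subtler step; my expected route is either a dual analogue of \Cref{prop:TProcycBro_R} obtained by a symmetric argument using $\mathscr{S}$ in place of $\mathscr{S}-1$, or an application of \Cref{lem:counterclockwise_edges} to transfer the computation to an orientation with $n-d$ CCW edges, after which a second application of the same core argument yields the required divisibility.
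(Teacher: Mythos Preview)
Your setup is correct: the reduction via \Cref{lem:counterclockwise_edges}, the identity $T^{n-d}=U^n$ from \Cref{prop:TProcycBro_R} at $\gamma=n-d$, and the homomesy consequence $n\mid k'$ are all valid, and the core relation $k/\gcd(k,n-d)=k'/n$ follows. Your observation that \Cref{prop:homomesy} yields $n\mid k'$ is nice; the paper does not invoke homomesy here at all.

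However, the gap you identify is real and your proposed patch does not close it. The extra ingredients you list---the $\cyc^{n/g}$-symmetry, the finer identity $T^{(n-d)/g}=\cyc^{-n/g}U^{n/g}$, and conjugation by $\Bro_{\mathscr S}$---do not by themselves force $(n-d)\mid k$. Concretely, take $n=6$, $d=2$, $g=2$: the hypothetical values $k=2$, $k'=6$ satisfy the core relation, and the finer identity only yields $U^3(\sigma)=\cyc^3(\sigma)$, which is consistent with $k'=6$ and gives no contradiction. The abstract stabilizer computation in $\langle T,U,\cyc^{n/g}\rangle$ lacks any relation that pins down the order of $T$ modulo the stabilizer of $\sigma$; all the relations you have access to are already generated by the single relation $T^{(n-d)/g}\cyc^{n/g}U^{-n/g}=1$ together with $(\cyc^{n/g})^g=1$. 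Your duality suggestion for $d\mid k$ (passing to $n-d$ counterclockwise edges via \Cref{lem:counterclockwise_edges}) would require the $(n-d)\mid k$ step for orientations with $n-d>n/2$ counterclockwise edges, where the independent-set framework is unavailable, so it does not bootstrap.

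The paper's argument is genuinely different in kind. Rather than fixing $\gamma=n-d$, it applies \Cref{prop:TProcycBro_R} at the \emph{unknown} $\gamma=k$, writing $T^k=\cyc^{-q}\Bro_J\,U^q$ with $kn=q(n-d)+r$. Since $T^k$ fixes the $U$-orbit $\mathcal M$ pointwise and $U$ commutes with $T$, the map $\cyc^{-q}\Bro_J$ must send $\mathcal M$ to itself. The paper then uses a purely combinatorial invariant (\Cref{Lem:SameOrder}): the relative left-to-right order of the non-glob labels $(\ZZ/n\ZZ)\setminus\mathscr S$ is preserved throughout $\mathcal M$. Tracking the leftmost such label through the three-step gliding-globs description of $\cyc\Bro_J$ forces $q\equiv 0$ or $-1\pmod n$, and the latter is excluded arithmetically. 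This gives $r=0$ and hence $(n-d)\mid k$; divisibility by $d$ then follows from the easy fact that the glob labels cycle under $U$ (so $d\mid k'$), together with a short $\gcd$ computation. The essential input your approach is missing is precisely this combinatorial rigidity of the non-glob labels---no amount of abelian-group bookkeeping recovers it.
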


\Cref{lem:counterclockwise_edges} tells us that it suffices to prove \Cref{prop:divisibility} when $1\leq d\leq\left\lfloor n/2\right\rfloor$. Furthermore, if $d=1$, then $\TPro_\beta$ is dynamically equivalent to the toric promotion operator $\TPro$, so it follows from \Cref{thm:toric_main} (specialized to the case when $G=\Path_n$) that all orbits of $\TPro_\beta$ have size $n-1$. Thus, we may assume in what follows that $2\leq d\leq\left\lfloor n/2\right\rfloor$. By \Cref{lem:counterclockwise_edges}, we only need to prove \Cref{prop:divisibility} for one specific choice of an acyclic orientation $\beta$ with $d$ counterclockwise edges. As in \Cref{sec:Broken_Path}, let $s_i=[[in/d]]$, and let $\mathscr S$ be the independent set $\{s_1,\ldots,s_d\}$ of $\Cycle_n$. Let $\mathscr R=(\ZZ/n\ZZ)\setminus(\mathscr S-1)$. Let $\beta=\beta_{\mathscr S}$ be the acyclic orientation of $\Cycle_n$ whose sources are the elements of $\mathscr S$ and whose sinks are the elements of $\mathscr S-1$. We will prove that every orbit of $\TPro_\beta$ has size divisible by $\lcm(d,n-d)$. 

Fix a labeling $\lambda\in\Lambda_{\Path_n}$. Let $\gamma$ be the size of the orbit of $\TPro_\beta$ containing $\lambda$. Using the division algorithm, we can write $\gamma n=q(n-d)+r$, where $q$ and $r$ are nonnegative integers and $0\leq r\leq n-d-1$. As in \Cref{prop:TProcycBro_R}, let
\[J=\{j\in[n]:q-\gamma+1\leq |[j-q,j-1]_n\cap(\mathscr S-1)|\}.\]

Since \Cref{prop:TProcycBro_R} allows us to rewrite $\TPro_\beta^\gamma$ in terms of the operator $\cyc\Bro_{\mathscr R}$, we will want to consider the orbit of $\lambda$ under $\cyc\Bro_{\mathscr R}$. Thus, we let \[\mathcal M=\{(\cyc\Bro_{\mathscr R})^t(\lambda):t\in\ZZ\}.\] In \Cref{subsec:Description2}, we described how to compute the action of $\cyc\Bro_{\mathscr R}$ on a labeling via a two-step procedure involving gliding globs. As mentioned in \Cref{Rem:SameOrder}, neither of the two steps in this procedure change the relative order in which the labels in $(\ZZ/n\ZZ)\setminus\mathscr S$ appear along the path. Thus, we have the following lemma. 

\begin{lemma}\label{Lem:SameOrder}
For every $\mu\in\mathcal M$, the order in which the labels in $(\ZZ/n\ZZ)\setminus \mathscr S$ appear along the path in $\mu$ is the same as the order in which they appear along the path in $\lambda$. 
\end{lemma}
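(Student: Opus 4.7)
The plan is to prove the lemma by induction on $t\ge 0$, where $\mu=(\cyc\Bro_{\mathscr R})^t(\lambda)$; since $\mathcal M$ is the (finite) forward orbit of $\lambda$ under $\cyc\Bro_{\mathscr R}$, nonnegative exponents suffice. The base case $t=0$ is trivial, so the entire content is the inductive step: a single application of $\cyc\Bro_{\mathscr R}$ preserves the relative left-to-right order along the path of the labels in $(\ZZ/n\ZZ)\setminus\mathscr S$. This is exactly the informal assertion of \Cref{Rem:SameOrder}, and I will verify it by dissecting the two-step description of $\cyc\Bro_{\mathscr R}$ from \Cref{subsec:Description2}.

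For Step $1$, the composite operator $\prod_{i=1}^{d}\jdt_{[s_i,s_{i+1}-1]_n}$ unpacks, via the definition in \Cref{subsec:Description1}, into a sequence of elementary operators $\jdt_{(s_i,\ell)}$ with $\ell\in\{s_i+1,\ldots,s_{i+1}-1\}$. The key observation is that any such $\ell$ lies strictly between two consecutive elements of $\mathscr S$, so $\ell\notin\mathscr S$; hence every elementary swap that actually fires during Step $1$ exchanges an $\mathscr S$-label with a non-$\mathscr S$-label on adjacent vertices of $\Path_n$. If we delete from $\mu$ the positions carrying $\mathscr S$-labels and record what remains, such a swap only moves the non-$\mathscr S$-label $\ell$ past an $\mathscr S$-label, so the left-to-right subword of non-$\mathscr S$-labels is unchanged. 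For Step $2$, each label $s_i$ is relabeled in place to $s_{i+1}\pmod n$; no label moves, and in particular every non-$\mathscr S$-label retains both its position and its value. Composing the two steps therefore preserves the non-$\mathscr S$ subword, closing the induction.

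There is no serious obstacle; the one bookkeeping point worth flagging is that the gliding glob carrying $s_i$ in Step $1$ must never attempt to swap with $s_{i+1}$, since such a swap would involve two $\mathscr S$-labels and could disturb the non-$\mathscr S$ order. This is built into the definition: the jeu-de-taquin interval terminates at $s_{i+1}-1$, so $s_{i+1}$ is never a candidate partner for $s_i$. Modulo this observation, the proof is essentially a careful formalization of \Cref{Rem:SameOrder}.
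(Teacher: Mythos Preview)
Your proposal is correct and takes essentially the same approach as the paper: the paper's proof is literally the sentence ``As mentioned in \Cref{Rem:SameOrder}, neither of the two steps in this procedure change the relative order in which the labels in $(\ZZ/n\ZZ)\setminus\mathscr S$ appear along the path,'' and your argument is a careful unpacking of exactly that remark. Your explicit check that each $\ell\in\{s_i+1,\ldots,s_{i+1}-1\}$ lies outside $\mathscr S$ (so every elementary $\jdt$ swap pairs an $\mathscr S$-label with a non-$\mathscr S$-label) and your observation that the gliding interval stops at $s_{i+1}-1$ are the right details to fill in.
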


We are now in a position to prove that $\gamma$ is divisible by $n-d$. 

\begin{lemma}\label{lem:divisible_n-d}
If $\lambda\in\Lambda_{\Path_n}$ belongs to an orbit of $\TPro_\beta$ of size $\gamma$, then $\gamma$ is divisible by $n-d$. 
\end{lemma}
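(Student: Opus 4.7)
The plan is to apply \Cref{prop:TProcycBro_R} to the equation $\TPro_\beta^\gamma(\lambda) = \lambda$ and then exploit the invariant supplied by \Cref{Lem:SameOrder}. Writing $\gamma n = q(n-d) + r$ with $0 \le r \le n-d-1$, \Cref{prop:TProcycBro_R} gives $\TPro_\beta^\gamma = \cyc^{-q}\Bro_J(\cyc\Bro_{\mathscr R})^q$ with $|J| = r$. Setting $\mu = (\cyc\Bro_{\mathscr R})^q(\lambda) \in \mathcal M$, the hypothesis $\TPro_\beta^\gamma(\lambda) = \lambda$ rewrites as $\Bro_J(\mu) = \cyc^q(\lambda)$, and \Cref{Lem:SameOrder} tells us that the non-$\mathscr S$ label subsequence $L_\mu$ equals $L_\lambda$. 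The divisibility $(n-d) \mid \gamma$ is equivalent to the joint conditions $r = 0$ and $n \mid q$, since these together force $\gamma = q(n-d)/n = (q/n)(n-d)$ with $q/n$ a positive integer.

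The first step is to show that $r = 0$. By \Cref{Lem:R1}, each connected component $J_i$ of $J$ in $\Cycle_n$ has the form $[s_{k_i},b_i]_n$ with $s_{k_i} \in \mathscr S$ and $b_i + 1 \not\in \mathscr S$, so $\Bro_{J_i}$ is a local broken-promotion acting only on the block of labels $\{s_{k_i}, s_{k_i}+1, \ldots, b_i+1\}$; within this block only $s_{k_i}$ lies in $\mathscr S$ and the rest are non-$\mathscr S$. The key step is to track, block by block, how $\Bro_J$ rearranges the non-$\mathscr S$ labels of $\mu$ (which begin in the arrangement $L_\lambda$) and compare the result with the non-$\mathscr S$ subsequence of $\cyc^q(\lambda)$. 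The latter is determined directly from $\lambda$ by the uniform value shift $\cyc^q(\lambda)(v) = \lambda(v) + q$, whereas the former is obtained by composing the local shifts produced by the $\Bro_{J_i}$. A combinatorial comparison of these two descriptions yields an obstruction unless $J$ is empty, forcing $r = 0$.

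Given $r = 0$, we have $\mu = \cyc^q(\lambda)$, which places $\cyc^q(\lambda)$ inside the $\cyc\Bro_{\mathscr R}$-orbit $\mathcal M$. The homomesy result \Cref{prop:homomesy}, applied to the indicators $\mathbbm{1}_{v,s_k}$ (which are homomesic for $\cyc\Bro_{\mathscr R}$ since $s_k - 1 \in \mathscr S - 1 \not\subseteq \mathscr R$), forces $|\mathcal M|$ to be a positive multiple of $n$. Combining this structural constraint with the identity $(\cyc\Bro_{\mathscr R})^q(\lambda) = \cyc^q(\lambda)$ then yields $n \mid q$, and together with $r = 0$ this gives $(n-d) \mid \gamma$.

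The main obstacle will be the block-level combinatorial analysis in the first step: one must pin down precisely how each local broken-promotion $\Bro_{J_i}$ shifts the non-$\mathscr S$ labels in its block and then show that the accumulated rearrangement produced by $\Bro_J$ cannot match the uniform cyclic shift $\cyc^q$ applied to $\lambda$ unless $r = 0$. The structural information provided by \Cref{Lem:R1} (that each $J_i$ is anchored at an element of $\mathscr S$) is exactly what makes this bookkeeping tractable.
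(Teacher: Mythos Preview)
Your outline identifies the right ingredients (\Cref{prop:TProcycBro_R}, \Cref{Lem:R1}, \Cref{Lem:SameOrder}), but as written it is a plan rather than a proof, and both of the two steps you describe have genuine problems.

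\textbf{Step 1 is not carried out, and the comparison you set up is ill-posed.} You write that a ``combinatorial comparison'' of how $\Bro_J$ rearranges the non-$\mathscr S$ labels of $\mu$ with the non-$\mathscr S$ subsequence of $\cyc^q(\lambda)$ ``yields an obstruction unless $J$ is empty,'' but you never perform this comparison. More seriously, the comparison is not straightforward to formulate: the vertices carrying non-$\mathscr S$ labels in $\cyc^q(\lambda)$ are exactly those whose $\lambda$-labels lie outside $\mathscr S-q$, which is \emph{not} the same set of vertices that carry non-$\mathscr S$ labels in $\lambda$ or $\mu$ unless $\mathscr S-q=\mathscr S$. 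So you cannot simply match the sequence $L_\lambda$ against the non-$\mathscr S$ subsequence of $\cyc^q(\lambda)$. The paper gets around this by first invoking \Cref{lem:commutes} to show that $\cyc^{-q}\Bro_J$ maps $\mathcal M$ to itself, then computing the leftmost non-$\mathscr S$ label of $\cyc^{-q}\Bro_J(\lambda)$ via a careful gliding-globs analysis; a crucial intermediate identity is $\mathscr S=\{y_1-q,\ldots,y_d-q\}$, where $J=\bigcup_i[s_i,y_i-1]_n$. This analysis introduces a parameter $\zeta\in\{0,1\}$ and yields $u_1+\zeta-q-1=u_1$, i.e.\ $q\equiv 0$ or $-1\pmod n$, after which ruling out $q\equiv -1$ forces $r=0$ and $n\mid q$ simultaneously.

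\textbf{Step 2 does not follow from homomesy.} Given $r=0$, you correctly obtain $(\cyc\Bro_{\mathscr R})^q(\lambda)=\cyc^q(\lambda)$, and \Cref{prop:homomesy} indeed forces $n\mid|\mathcal M|$. But these two facts do not imply $n\mid q$. Knowing that the orbit has size a multiple of $n$ places no constraint on \emph{which} element of $\mathcal M$ the labeling $\cyc^q(\lambda)$ is, nor on the value of $q$ modulo $n$. (In fact, once $r=0$, the identity $\gamma n=q(n-d)$ shows that $n\mid q$ is equivalent to $(n-d)\mid\gamma$, so your step~2 is exactly as hard as the lemma itself.) The paper does not separate the argument into ``first $r=0$, then $n\mid q$''; both conclusions fall out of the single congruence $q\equiv \zeta-1\pmod n$ obtained from the label-tracking analysis.
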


\begin{proof}
Recall that we write $\gamma n=q(n-d)+r$ using the division algorithm. The map $\TPro_\beta^\gamma$
fixes $\lambda$. \Cref{lem:commutes} tells us that $\TPro_\beta$ commutes with $\cyc\Bro_{\mathscr R}$, so $\TPro_\beta^\gamma$ acts as the identity on $\mathcal M$ and thus trivially restricts to a bijection from $\mathcal M$ to itself. Since $\TPro_\beta^\gamma=\cyc^{-q}\Bro_J(\cyc\Bro_{\mathscr R})^q$ by 
\Cref{prop:TProcycBro_R}, the map $\cyc^{-q}\Bro_J$ also restricts to a bijection from $\mathcal M$ to itself. Let $u_1,\ldots,u_{n-d}$ be the elements of $(\ZZ/n\ZZ)\setminus\mathscr S$, listed in the order in which they appear from left to right along the path in $\lambda$. It follows from \Cref{Lem:R1} that there exist integers $\ldots,y_0,y_1,y_2,\ldots$ satisfying $y_{i+d}=y_i+n$ and $s_i\leq y_i\leq s_{i+1}-1$ for all $i$ such that $J=\bigcup_{i=1}^d[s_i,y_i-1]_n$ (viewing $J$ as a subset of $\ZZ/n\ZZ$). For each $1\leq i\leq d$, we have that $y_i\not\in J$ and $y_i-1\in J$, so it follows from the definition of $J$ that $|[y_i-q,y_i-1]\cap(\mathscr S-1)|<|[y_i-1-q,y_i-2]\cap(\mathscr S-1)|$. We deduce that $y_i-1-q\in \mathscr S-1$ for all $1\leq i\leq d$. Therefore, 
\begin{equation}\label{eq:yS}
\mathscr S=\{y_1-q,\ldots,y_d-q\}. 
\end{equation} 

Let \[\zeta=\begin{cases} 0 & \mbox{if } u_1\in\bigcup_{\ell=1}^d[s_\ell,y_\ell]_n; \\ 1 & \mbox{otherwise.} \end{cases}\] Note that, regardless of the value of $\zeta$, the element $u_1+\zeta-1$ cannot be of the form $y_i$ for any integer $i$. Therefore, it follows from \eqref{eq:yS} that 
\begin{equation}\label{eq:zeta}
u_1+\zeta-q-1\not\in\mathscr S.
\end{equation} 
As mentioned above, $\cyc^{-q}\Bro_J$ restricts to a bijection from $\mathcal M$ to itself; thus, it follows from \Cref{Lem:SameOrder} that the labels in $(\ZZ/n\ZZ)\setminus\mathscr S$ appear in the order $u_1,\ldots,u_{n-d}$ from left to right along the path in the labeling $\cyc^{-q}\Bro_J(\lambda)$. 

Consider applying $\cyc\Bro_J$ to $\lambda$ using the three-step gliding-globs procedure described in \Cref{sec:broken}. We immerse the labels $s_1,\ldots,s_d$ and then apply the jeu de taquin operators $\jdt_{[s_i,y_i]_n}$, imagining that the label $s_i$ carries its glob along with it as it glides. After this initial step, the label $u_1$ will be on some vertex $z$; at this point in time, all of the vertices to the left of $z$ have globs of liquid on them, while $z$ does not. We claim that $\lambda(z)\in\bigcup_{\ell=1}^d[s_\ell,y_\ell]_n$ if and only if $u_1\in\bigcup_{\ell=1}^d[s_\ell,y_\ell]_n$. This is obvious if $\lambda(z)=u_1$. On the other hand, if $\lambda(z)\neq u_1$, then it follows from the definition of the jeu de taquin operators that $\lambda(z)$ and $u_1$ must both be in $\bigcup_{\ell=1}^d[s_\ell,y_\ell]_n$. This proves the claim, which is equivalent to the statement that $\zeta=1$ if and only if $z\in\lambda^{-1}((\ZZ/n\ZZ)\setminus\bigcup_{\ell=1}^d[s_\ell,y_\ell]_n)$. The second step in the gliding-globs procedure increases by $1$ the label of each vertex in $\lambda^{-1}((\ZZ/n\ZZ)\setminus\bigcup_{\ell=1}^d[s_\ell,y_\ell]_n)$; therefore, the label of $z$ is $u_1+\zeta$ after the second step. Note that the second step does not move any of the globs of liquid. The third step of the procedure changes the label in each glob of liquid to a label of the form $y_i+1$. It follows that in the labeling $\cyc\Bro_J(\lambda)$, the labels of the vertices to the left of $z$ are all of the form $y_i+1$, and the label of $z$ is $u_1+\zeta$. This means that in the labeling $\cyc^{-q}\Bro_J(\lambda)=\cyc^{-q-1}(\cyc\Bro_J(\lambda))$, the labels of the vertices to the left of $z$ are all of the form $y_i-q$ (i.e., they are in $\mathscr S$ by \eqref{eq:yS}), and the label of $z$ is $u_1+\zeta-q-1$. Combining this with \eqref{eq:zeta}, we find that $u_1+\zeta-q-1$ is the label in $(\ZZ/n\ZZ)\setminus\mathscr S$ that appears farthest to the left in the labeling $\cyc^{-q}\Bro_J(\lambda)$. As mentioned above, $\cyc^{-q}\Bro_J$ sends $\mathcal M$ to itself, so it follows from \Cref{Lem:SameOrder} that the labels in $(\ZZ/n\ZZ)\setminus\mathscr S$ appear in the order $u_1,\ldots,u_{n-d}$ in $\cyc^{-q}\Bro_J(\lambda)$. Consequently, $u_1+\zeta-q-1=u_1$. This proves that $q$ is congruent to $0$ or $-1$ modulo $n$.  

We defined $q$ and $r$ so that $\gamma n=q(n-d)+r$ and $0\leq r\leq n-d-1$. This implies that $r\not\equiv -d\pmod n$. Reading the first equation modulo $n$ yields $r\equiv qd\pmod n$, so $q\not\equiv -1\pmod n$. Therefore, we must have $q\equiv 0\pmod n$ and $r=0$. Writing $q=mn$, we find that $\gamma=m(n-d)$, which completes the proof. 
\end{proof}

Finally, we can complete the proof of the main result of this section. 

\begin{proof}[Proof of \Cref{prop:divisibility}]
As discussed at the beginning of this section, it suffices to prove that every orbit of $\TPro_\beta$ is divisible by $\lcm(d,n-d)$, where $\beta=\beta_{\mathscr S}$ is the acyclic orientation of $\Cycle_n$ coming from the independent set $\mathscr S$ defined above. As before, let $\lambda\in\Lambda_{\Path_n}$, and let $\gamma$ be the size of the orbit of $\TPro_\beta$ containing $\lambda$. \Cref{lem:divisible_n-d} tells us that $\gamma$ is divisible by $n-d$, so we just need to show that $\gamma$ is also divisible by $d$. Using the division algorithm, we can write $\gamma n=q(n-d)+r$. Since $n-d$ divides $\gamma$, we find that $r=0$ and that $q$ is divisible by $n$. Thus, it follows from \Cref{prop:TProcycBro_R} that the set $J$ is empty and that we can write $\TPro_\beta^\gamma=\cyc^{-q}(\cyc\Bro_{\mathscr R})^q=(\cyc\Bro_{\mathscr R})^q$. 

Given a labeling $\sigma\in\Lambda_{\Path_n}$, let $\psi(\sigma)$ be the sequence obtained by reading the labels in $\mathscr S$ in the order in which they appear from left to right along the path in $\sigma$. Recall from \Cref{subsec:Description2} the two-step gliding-globs procedure for computing the action of $\cyc\Bro_{\mathscr R}$. In the first step of this procedure, none of the globs of liquid can glide through each other. In the second step, we simply cyclically permute the $d$ labels in the globs of liquid. This shows that $\psi(\cyc\Bro_{\mathscr R}(\sigma))$ is obtained from $\psi(\sigma)$ by cyclically permuting the labels in $\mathscr S$ in the cyclic order $s_1,\ldots,s_d$. It follows that every orbit of $\cyc\Bro_{\mathscr R}$ has size divisible by $d$. Since $\lambda=\TPro_\beta^\gamma(\lambda)=(\cyc\Bro_{\mathscr R})^q(\lambda)$, we find that $d$ divides $q$. The equation $\gamma n=q(n-d)$ then forces $d(n-d)$ to divide $\gamma n$. Since $\gcd(n,d)$ divides $n-d$, this implies that $d$ divides $\gamma(n/\gcd(n,d))$. But $d$ and $n/\gcd(n,d)$ are coprime, so $d$ divides~$\gamma$. 
\end{proof}                              

\section{Orbit Structure of Permutoric Promotion}\label{sec:Orbit_Structure}

Throughout this section, we continue to fix $G$ to be the path graph $\Path_n$. Our primary goal is to prove \Cref{thm:main}. 

\subsection{A Reformulation}\label{subsec:reformulation}
One of the advantages of \Cref{lem:counterclockwise_edges} is that it allows us to work with whichever acyclic orientation $\beta$ is most convenient for our purposes. In \Cref{sec:divisibility}, we chose to work with the acyclic orientation $\beta_{\mathscr S}$ whose sources were the elements of an independent set $\mathscr S$ and whose sinks were the elements of $\mathscr S-1$. However, in this section, we will fix $\beta$ to be the acyclic orientation of $\Cycle_n$ whose unique source is $d$ and whose unique sink is $n$. 

The purpose of \Cref{sec:divisibility} was to prove \Cref{prop:divisibility}, which tells us that the sizes of the orbits of $\TPro_\beta$ are all divisible by $\lcm(d,n-d)$. The reason this is necessary is that it allows us to reduce the problem of determining the orbit sizes of $\TPro_\beta$ to the problem of determining the orbit sizes of $\TPro_\beta^d$. The following proposition allows us to rewrite $\TPro_\beta^d$ in a more convenient form. 

\begin{proposition}\label{prop:Psi}
We have \[\TPro_\beta^d=\prod_{i=n}^1(\tau_i\tau_{i+1}\cdots\tau_{i+d-1})=(\tau_{n}\tau_{n+1}\cdots\tau_{d+n-1})\cdots(\tau_2\tau_3\cdots\tau_{d+1})(\tau_1\tau_2\cdots\tau_d).\]
\end{proposition}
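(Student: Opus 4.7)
The strategy is to apply \Cref{lem:suffix} to the word $Y = Y_n Y_{n-1} \cdots Y_1$ with $k = d$, where $Y_i = \tau_i \tau_{i+1} \cdots \tau_{i+d-1}$. The multiplicity condition is immediate: the letter $\tau_a$ appears in block $Y_i$ precisely when $i \in [a-d+1, a]_n$, a cyclic interval of length $d$, so each $\tau_a$ occurs exactly $d$ times in $Y$, matching the count in $\TPro_\beta^d$.

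The substance of the proof is the suffix condition of \Cref{lem:suffix}: for every suffix $X$ of $Y$ and every arrow $a \to b$ of $\beta$, $X\!\langle a\rangle - X\!\langle b\rangle \in \{0, 1\}$. Equivalently, passing to the reverse word $Y^{\mathrm{rev}} = Y_1^{\mathrm{rev}} Y_2^{\mathrm{rev}} \cdots Y_n^{\mathrm{rev}}$ (where $Y_j^{\mathrm{rev}} = \tau_{j+d-1} \tau_{j+d-2} \cdots \tau_j$), I would track $D := X'\!\langle a\rangle - X'\!\langle b\rangle$ over prefixes $X'$ of $Y^{\mathrm{rev}}$ and verify $D \in \{0, 1\}$ throughout. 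Each block $Y_j^{\mathrm{rev}}$ can be classified relative to the arrow as \emph{increment} (contains only $\tau_a$), \emph{decrement} (contains only $\tau_b$), \emph{both}, or \emph{neither}. When both $\tau_a$ and $\tau_b$ appear in a block, they occupy adjacent positions: for a right chain arrow $a \to a+1$, $\tau_b$ precedes $\tau_a$; for a left chain arrow $a \to a-1 \pmod n$, $\tau_a$ precedes $\tau_b$.

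The arrows of $\beta$ form two families. For the right chain, $a \in \{d, \ldots, n-1\}$ and $b = a+1$: the increment block is $j = a - d + 1$, the decrement is $j = a + 1$, and the ``both'' blocks are $j = a - d + 2, \ldots, a$; processed in the order $j = 1, 2, \ldots, n$, the trace of $D$ starts at $0$, jumps to $1$ at the increment block, stays at $1$ through the ``both'' blocks (each of which temporarily dips $D$ to $0$ and returns it to $1$), falls to $0$ at the decrement block, and remains $0$ thereafter. For the left chain, $a \in \{1, \ldots, d\}$ and $b = a - 1 \pmod n$ (with $a = 1$ giving the closing arrow $1 \to n$): the increment block is $j = a$, the decrement is $j = (a - d) \pmod n$, and the ``both'' blocks form the cyclic interval $[a-d+1, a-1]_n$, which typically wraps around $\ZZ/n\ZZ$. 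Since ``both'' blocks now temporarily raise $D$ by $1$ and return it, they are safe when entered with $D = 0$; the trace stays at $0$ through any initial ``both'' blocks (for $a \geq 2$), jumps to $1$ at the increment block, remains $1$ through any intervening ``neither'' blocks, falls to $0$ at the decrement block, and stays $0$ through the trailing ``both'' blocks.

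The main obstacle is the careful cyclic bookkeeping in the left chain case, where the cyclic interval of ``both'' blocks wraps around $\ZZ/n\ZZ$ and the reading order $j = 1, 2, \ldots, n$ cuts across the wrap, splitting the ``both'' blocks into a batch before the increment block and a batch after the decrement block. One must verify the $\{0, 1\}$ bound on $D$ at every intermediate position within each block, not merely at block boundaries; the orientation of $\beta$ conveniently places the increment and decrement blocks exactly where $D$ must transition, keeping the trace in range. Once this analysis is completed for all arrows, \Cref{lem:suffix} delivers the equality $Y = \TPro_\beta^d$ as a bijection on $\Lambda_{\Path_n}$.
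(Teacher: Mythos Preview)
Your proposal is correct and follows exactly the same approach as the paper: invoke \Cref{lem:suffix} with $k=d$, verify the multiplicity condition, and check the suffix condition $X\!\langle a\rangle-X\!\langle b\rangle\in\{0,1\}$ for each arrow $a\to b$ of $\beta$. The paper dismisses this last step as ``straightforward to check directly,'' whereas you carry out the block-by-block case analysis in full; your reformulation via prefixes of the reversed word is harmless and the bookkeeping is accurate.
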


\begin{proof}
Think of $(\tau_{n}\tau_{n+1}\cdots\tau_{d+n-1})\cdots(\tau_2\tau_3\cdots\tau_{d+1})(\tau_1\tau_2\cdots\tau_d)$ as a word $Y$ over the alphabet $\{\tau_1,\ldots,\tau_n\}$. Note that every letter in this alphabet appears exactly $d$ times in $Y$. By \Cref{lem:suffix}, we just need to show that if $X$ is a suffix of $Y$ and $a\to b$ is an arrow in $\beta$, then $X\!\langle a\rangle-X\!\langle b\rangle\in\{0,1\}$; this is straightforward to check directly.   
\end{proof}

\begin{remark}\label{rem:Bro_d}
By combining \Cref{prop:Psi} with the identity $\cyc^{-1}\tau_{i+1}=\tau_{i}\cyc^{-1}$ and the fact that $\cyc^n$ is the identity map, one can readily check that $\TPro_\beta^d=\left(\cyc^{-1}\Bro_{\{1,\ldots,d\}}^{-1}\right)^n$. \hfill $\triangle$
\end{remark}

Define a map $\Phi_{n,d}\colon\Lambda_{\Path_n}\to\Lambda_{\Path_n}$ by \[\Phi_{n,d}=\cyc^d\prod_{i=n-d}^1(\tau_i\tau_{i+1}\cdots\tau_{i+d-1})=\cyc^d(\tau_{n-d}\tau_{n-d+1}\cdots\tau_{n-1})\cdots(\tau_2\tau_3\cdots\tau_{d+1})(\tau_1\tau_2\cdots\tau_d).\] Using the identity $\cyc\tau_i=\tau_{i+1}\cyc$ together with \Cref{prop:Psi}, one can check that
\begin{equation}\label{eq:PhiTPro}
\Phi_{n,d}^{n/\gcd(n,d)}=\TPro_\beta^{\lcm(d,n-d)}.
\end{equation}

\begin{lemma}\label{lem:Phi_Divisible}
Every orbit of $\Phi_{n,d}\colon\Lambda_{\Path_n}\to \Lambda_{\Path_n}$ has size divisible by $n/\gcd(n,d)$. 
\end{lemma}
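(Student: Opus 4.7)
The plan is to establish the identity $\Phi_{n,d}=Q^{n-d}$, where $Q=\cyc^{-1}\Bro_{\{1,\ldots,d\}}^{-1}$, apply the homomesy result \Cref{prop:homomesy} to conclude that every $Q$-orbit has size divisible by $n$, and then finish with a short number-theoretic computation. This approach parallels \Cref{rem:Bro_d}, which expresses $\TPro_\beta^d$ as $Q^n$.

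First I would prove the identity $\Phi_{n,d}=Q^{n-d}$ by a direct manipulation. Let $A_j=\tau_{j+1}\tau_{j+2}\cdots\tau_{j+d}$, so that $A_0=\Bro_{\{1,\ldots,d\}}^{-1}$ and $\Phi_{n,d}=\cyc^d A_{n-d-1}A_{n-d-2}\cdots A_0$. Repeatedly applying $\cyc\tau_i=\tau_{i+1}\cyc$ yields $A_j\cyc^{-1}=\cyc^{-1}A_{j+1}$, whence a routine induction gives
\[(\cyc^{-1}A_0)^k=\cyc^{-k}A_{k-1}A_{k-2}\cdots A_0\]
for all $k\geq 1$. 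Specializing to $k=n-d$ and using $\cyc^{-(n-d)}=\cyc^d$ (since $\cyc^n=\mathrm{id}$), I would conclude $\Phi_{n,d}=(\cyc^{-1}A_0)^{n-d}=Q^{n-d}$.

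Next I would show that every orbit of $Q$ has size divisible by $n$. Using the same braid identity, $Q^{-1}=\Bro_{\{1,\ldots,d\}}\cyc$ simplifies to $\cyc\Bro_{B}$, where $B=\{n,1,2,\ldots,d-1\}$ is the cyclic interval of size $d$ ending at $d-1$. Since $d<n$, this $B$ is a proper subset of $\ZZ/n\ZZ$, and $i-1=d\not\in B$ for $i=d+1$. \Cref{prop:homomesy} therefore shows that $\mathbbm{1}_{v,d+1}$ is homomesic for $Q^{-1}$ with average $1/n$ on every orbit. Since $|\mathcal O|/n$ must be a nonnegative integer on any orbit $\mathcal O$, every orbit of $Q^{-1}$ (equivalently, every orbit of $Q$) has size divisible by $n$.

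To finish, write $g=\gcd(n,d)$, $\tilde n=n/g$, and $\tilde d=d/g$, so that $\gcd(\tilde n,\tilde d)=1$ and hence $\gcd(\tilde n,\tilde n-\tilde d)=1$. Given $\sigma\in\Lambda_{\Path_n}$, let $N=na$ be its $Q$-orbit size. The $\Phi_{n,d}$-orbit size of $\sigma$ equals $N/\gcd(N,n-d)$, and since $n-d=g(\tilde n-\tilde d)$, the coprimality gives
\[\gcd(N,n-d)=\gcd(g\tilde n a,g(\tilde n-\tilde d))=g\gcd(a,\tilde n-\tilde d).\]
Thus the $\Phi_{n,d}$-orbit size of $\sigma$ is $\tilde n\cdot a/\gcd(a,\tilde n-\tilde d)$, a positive integer multiple of $\tilde n=n/\gcd(n,d)$. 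The main obstacle is producing the clean identity $\Phi_{n,d}=Q^{n-d}$; once that is in hand, the remainder is mechanical.
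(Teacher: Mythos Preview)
Your proof is correct and takes a genuinely different route from the paper's. The paper argues via the friends-and-strangers graph $\mathsf{FS}(\overline{\Path}_n,\Cycle_n)$: toggles preserve the connected component $H_\sigma$, while $\cyc$ permutes the components in orbits of size exactly $n$ (citing \cite{DefantFriends}); since $H_{\Phi_{n,d}(\sigma)}=\cyc^d\cdot H_\sigma$, any $k$ with $\Phi_{n,d}^k(\sigma)=\sigma$ forces $n\mid dk$. Your argument instead stays entirely inside the paper: you establish the clean identity $\Phi_{n,d}=(\cyc^{-1}\Bro_{\{1,\ldots,d\}}^{-1})^{n-d}$ (the natural companion to \Cref{rem:Bro_d}), rewrite the base map as $\cyc\Bro_B$ for the shifted interval $B=\{n,1,\ldots,d-1\}$, and then invoke \Cref{prop:homomesy} to force $n$ to divide every $Q$-orbit size, finishing with the elementary computation $\gcd(na,n-d)=g\gcd(a,\tilde n-\tilde d)$. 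The advantage of your approach is that it is self-contained and reuses machinery already developed in the paper, avoiding the external input from \cite{DefantFriends}; the paper's approach, by contrast, gives a more structural explanation of the invariant being tracked (the flip-equivalence class of the induced orientation), which is conceptually illuminating even if it outsources the key fact about orbit sizes.
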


\begin{proof}
Let $\mathsf{FS}(\overline{\Path}_n,\Cycle_n)$ be the graph with vertex set $\Lambda_{\Path_n}$ in which two distinct labelings $\sigma,\sigma'$ are adjacent if and only if there exists $i\in\mathbb Z/n\mathbb Z$ such that $\sigma'=\tau_i(\sigma)$. In the language of the article \cite{DefantFriends}, this is the \emph{friends-and-strangers graph} of $\overline{\Path}_n$ and $\Cycle_n$, where $\overline{\Path}_n$ is the complement of $\Path_n$. For $\sigma\in\Lambda_{\Path_n}$, let $H_\sigma$ be the connected component of $\mathsf{FS}(\overline{\Path}_n,\Cycle_n)$ containing $\sigma$. It follows from Theorem~4.1 and Proposition~4.4 in \cite{DefantFriends} that there is a well-defined action of the group $\langle\cyc\rangle\cong\ZZ/n\ZZ$ on the set of connected components of $\mathsf{FS}(\overline{\Path}_n,\Cycle_n)$ given by $\cyc\cdot H_\sigma=H_{\cyc(\sigma)}$; moreover, these results from \cite{DefantFriends} imply that all orbits of this action have size $n$. Note that $H_{\Phi_{n,d}(\sigma)}=\cyc^d\cdot H_\sigma$. If $\Phi_{n,d}^k(\sigma)=\sigma$, then $H_\sigma=H_{\Phi_{n,d}^k(\sigma)}=\cyc^{dk}\cdot H_\sigma$, so $k$ is divisible by $n/\gcd(n,d)$. 
\end{proof}

Let $\Comp_d(n)$ denote the set of compositions of $n$ with $d$ parts (i.e., $d$-tuples of positive integers that sum to $n$). There is a natural \dfn{rotation} operator $\Rot_{n,d}\colon\Comp_d(n)\to\Comp_d(n)$ defined by $\Rot_{n,d}(a_1,a_2,\ldots,a_d)=(a_2,\ldots,a_d,a_1)$. 
Our goal in the next subsection will be to relate $\Phi_{n,d}$ and $\Rot_{n,d}$ via the following proposition. Recall that we write $\Orb_f$ for the set of orbits of an invertible map $f$. 

\begin{proposition}\label{prop:PhiRot}
There is a map $\Omega\colon\Orb_{\Phi_{n,d}}\to\Orb_{\Rot_{n,d}}$ such that $|\Omega(\mathcal O)|=\frac{d}{n}|\mathcal O|$ for every $\mathcal O\in\Orb_{\Phi_{n,d}}$ and $|\Omega^{-1}(\widehat{\mathcal O})|=d!(n-d)!$ for every $\widehat{\mathcal O}\in\Orb_{\Rot_{n,d}}$. 
\end{proposition}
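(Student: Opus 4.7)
The plan is to reformulate $\Phi_{n,d}$ dynamics in terms of $d$ ``coin'' tokens sliding along $\Path_n$ and colliding with one another, and then read off $\Omega(\sigma)$ as the sequence of distances traveled between successive collisions. I would begin by associating to each labeling $\sigma$ a configuration of $d$ coins on $\Path_n$ chosen so that the factorization
\[\Phi_{n,d} = \cyc^d \, (\tau_{n-d} \cdots \tau_{n-1}) \cdots (\tau_2 \cdots \tau_{d+1}) (\tau_1 \cdots \tau_d)\]
admits a clean interpretation: in each atomic block $(\tau_i \tau_{i+1} \cdots \tau_{i+d-1})$, the label $i$ attempts to glide rightward through the labels $i+1, \ldots, i+d$ in jeu de taquin fashion, which I would interpret as one coin sliding along the path until it collides with the next coin. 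The outer $\cyc^d$ then relabels so that the picture iterates coherently. I also expect a parallel ``stones on $\Cycle_n$'' picture to control when a given toggle actually fires, allowing a toggle-by-toggle verification that this dynamics really implements $\Phi_{n,d}$.

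To define $\Omega(\sigma)$, I would run $\Phi_{n,d}$ for a full cycle of the coin dynamics---that is, until each coin returns to its starting configuration---and record the distances $(a_1, a_2, \ldots, a_d)$ that successive coins travel between collisions. Because the coins collectively traverse the path, these distances sum to $n$, giving a composition in $\Comp_d(n)$, and the ambiguity in the choice of starting coin amounts to a cyclic rotation of the composition, so $\Omega$ outputs a well-defined element of $\Orb_{\Rot_{n,d}}$. Invariance under passage to another labeling in the same $\Phi_{n,d}$-orbit follows because reading off the travel distances starting from a later labeling in the orbit simply rotates the recorded tuple.

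The size relation $|\Omega(\mathcal{O})| = (d/n)|\mathcal{O}|$ should then follow from the fact that one application of $\Phi_{n,d}$ advances the recorded composition by the fraction $d/n$ of a full cyclic rotation, so that $n/d$ applications correspond to a single step of $\Rot_{n,d}$. By \Cref{lem:Phi_Divisible}, $|\mathcal{O}|$ is divisible by $n/\gcd(n,d)$, which guarantees the dynamics closes up cleanly and that $(d/n)|\mathcal{O}|$ is an integer orbit size for $\Rot_{n,d}$. For the fiber count $|\Omega^{-1}(\widehat{\mathcal{O}})| = d!(n-d)!$, I would argue that knowing $\widehat{\mathcal{O}}$ determines the pattern of coin positions and collisions but leaves free a permutation of the $d$ coin labels among the coin roles and a permutation of the $n-d$ non-coin labels among the non-coin roles, each such choice producing a distinct $\Phi_{n,d}$-orbit mapping to $\widehat{\mathcal{O}}$. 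A global consistency check is
\[\sum_{\widehat{\mathcal{O}} \in \Orb_{\Rot_{n,d}}} d!\,(n-d)! \cdot \tfrac{n}{d}\, |\widehat{\mathcal{O}}| = d!\,(n-d)! \cdot \tfrac{n}{d} \binom{n-1}{d-1} = n! = |\Lambda_{\Path_n}|.\]

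The main obstacle is rigorously defining the coin (and stone) configurations and their collision dynamics so that the $d$ collisions in one full cycle are unambiguous, that they fire in correspondence with the toggles of $\Phi_{n,d}$ in the correct order, and that the recorded travel distances are genuinely orbit-invariants whose induced map is $\Rot_{n,d}$. I expect this will require carefully tracking the toggle-by-toggle evolution of the coin positions (using the non-adjacency condition for each $\tau_i$ to decide whether the coin slides or stalls), and invoking \Cref{lem:Phi_Divisible} to ensure a full cycle closes up. Once this bookkeeping is set up, both numerical claims should fall out of the construction.
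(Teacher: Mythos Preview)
Your proposal is essentially the paper's approach: the paper does set up stones on $\Cycle_n$ and coins on $\Path_n$, records the distances coins travel between successive collisions as an ``energy composition'' in $\Comp_d(n)$, and shows this assignment descends to a map $\Omega$ on orbits. The global counting identity you wrote down is exactly how the paper closes the argument. So the architecture is right.

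That said, two of your steps conceal the real work, and as stated they would not go through. First, the size relation $|\Omega(\mathcal O)|=\tfrac{d}{n}|\mathcal O|$ is not a consequence of ``one application of $\Phi_{n,d}$ advancing the composition by $d/n$ of a rotation.'' The paper instead organizes all collisions into a Hasse diagram ${\bf H}_{\mathcal T}$ shaped like a chain-link fence, proves that in every \emph{diamond} opposite edges have equal energy and in every \emph{half-diamond} the two edges have equal energy (\Cref{lem:diamond,lem:half-diamond}), and uses this to show that moving up one layer of the fence rotates the energy composition by one step (\Cref{lem:ERot}). A separate timing lemma (\Cref{lem:new}) computes that traversing a half-diamond of energy $m$ takes exactly $m(n-d)$ time units; summing over one period of the fence is what converts the $\Rot$-orbit size $p$ into the $\Phi$-orbit size $pn/d$. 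None of this is automatic from the coin picture alone, and your sketch does not hint at the diamond structure.

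Second, for the fiber count you assert that each choice of $(\rho,\overline\rho)\in S_d\times S_{n-d}$ ``produces a distinct $\Phi_{n,d}$-orbit mapping to $\widehat{\mathcal O}$.'' Distinctness is fine once you check $\stand$ and $\overline\stand$ are orbit invariants (\Cref{lem:depends_orbit}), but existence---that \emph{every} triple $(\widehat{\mathcal O},\rho,\overline\rho)$ is actually realized---is the hard direction. The paper proves it by induction on $d$ (\Cref{lem:inequality}): given a realizing orbit for $(d-1)$ coins on $\Path_{n-\varepsilon_1}$, one inserts $\varepsilon_1$ new vertices and one new stone into the stones diagram at a carefully chosen moment, and then argues that the resulting collision pattern has the desired energy composition. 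This step also requires a reversal symmetry (\Cref{lem:reverse}) to reduce to the case where the first two stones appear in a convenient order. Without this construction, your counting check only shows the fiber sizes are at most $d!(n-d)!$ on average, not that each individual fiber hits $d!(n-d)!$.
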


Before proceeding to the proof of \Cref{prop:PhiRot}, let us see why it implies \Cref{thm:main}. 

\begin{proof}[Proof of \Cref{thm:main} assuming \Cref{prop:PhiRot}]
Let $k_1,\ldots,k_\ell$ be the sizes of the orbits of $\Rot_{n,d}$, and let $m_i$ be the number of orbits of $\Rot_{n,d}$ of size $k_i$. Then $\{k_i^{m_i}:1\leq i\leq \ell\}$ is the multiset of orbit sizes of $\Rot_{n,d}$, where we use superscripts to denote multiplicities. If we assume \Cref{prop:PhiRot}, then we find that the multiset of orbit sizes of $\Phi_{n,d}$ is \[\left\{\left(\frac{n}{d}k_i\right)^{d!(n-d)!m_i}:1\leq i\leq \ell\right\}.\]
It then follows from \eqref{eq:PhiTPro} and \Cref{lem:Phi_Divisible} that the multiset of orbit sizes of $\TPro_\beta^{\lcm(d,n-d)}$ is \[\left\{\left(\frac{\gcd(n,d)}{n}\frac{n}{d}k_i\right)^{(n/\gcd(n,d))d!(n-d)!m_i}:1\leq i\leq \ell\right\},\] and we can then invoke \Cref{prop:divisibility} to see that the multiset of orbit sizes of $\TPro_\beta$ is \[\left\{\left(\lcm(d,n-d)\frac{\gcd(n,d)}{n}\frac{n}{d}k_i\right)^{(1/\lcm(d,n-d))(n/\gcd(n,d))d!(n-d)!m_i}: 1\leq i\leq \ell\right\}\] \[=\left\{\left((n-d)k_i\right)^{n(d-1)!(n-d-1)!m_i}: 1\leq i\leq \ell\right\}.\] Since $\Rot_{n,d}$ has order $d$, this implies that $\TPro_\beta$ has order $d(n-d)$. It is well known \cite{CSPDefinition} that the triple \[\left(\Comp_d(n),\Rot_{n,d},{n-1 \brack d-1}_q\right)\] exhibits the cyclic sieving phenomenon. Hence, \Cref{lem:CSP_technical} (with $f=\Rot_{n,d}$ and $g=\TPro_\beta$) implies that \[\left(\Lambda_{\Path_n},\TPro_\beta,n(d-1)!(n-d-1)![n-d]_{q^d}{n-1\brack d-1}_q\right)\] exhibits the cyclic sieving phenomenon. 
\end{proof}

\subsection{Sliding Stones and Colliding Coins}
Our aim is now to prove \Cref{prop:PhiRot}, which, as we have just seen, implies our main theorem about the orbit structure of permutoric promotion. Code implementing several of the combinatorial constructions described in this section can be found at \url{https://cocalc.com/hrthomas/permutoric-promotion/implementation}.

For each integer $k$, let $\theta_k=\tau_{q+d+1-r}$, where $q$ and $r$ are the unique integers satisfying $k=qd+r$ and $1\leq r\leq d$. Let \[\nu_\ell=\theta_{d\ell}\theta_{d\ell-1}\cdots\theta_{d(\ell-1)+2}\theta_{d(\ell-1)+1}.\] Observe that $\theta_{k+dn}=\theta_k$ for all integers $k$. We have \[\Phi_{n,d}=\cyc^d\theta_{d(n-d)}\cdots\theta_2\theta_1=\cyc^d\nu_{n-d}\cdots\nu_2\nu_1.\] By combining the identity $\cyc\tau_i=\tau_{i+1}\cyc$ with the fact that $\cyc^n$ is the identity map, one can easily verify that 
\begin{equation}\label{eq:Phi_Order}
\Phi_{n,d}^{m}=\theta_{md(n-d)}\cdots\theta_2\theta_1=\nu_{m(n-d)}\cdots\nu_2\nu_1
\end{equation}
whenever $m$ is a positive multiple of $n/\gcd(n,d)$. 

Define a \dfn{state} to be a pair $(\sigma,t)\in\Lambda_{\Path_n}\times\ZZ$; we call $\sigma$ the \dfn{labeling} of the state and say that the state is at \dfn{time} $t$. A \dfn{timeline} is a bi-infinite sequence $\mathcal T=(\sigma_t,t)_{t\in\ZZ}$ of states such that $\sigma_t=\nu_t(\sigma_{t-1})$ for all $t\in\ZZ$. Note that every state belongs to a unique timeline. For $\sigma\in\Lambda_{\Path_n}$, let $\mathcal T_\sigma$ be the unique timeline containing the state $(\sigma,0)$. 

Let $v_1,\ldots,v_n$ be the vertices of $\Path_n$, listed from left to right. For each $\ell\in[n]$, let $\vv_\ell$ be a formal symbol associated to $v_\ell$; we will call $\vv_\ell$ a \dfn{replica}. Let $\s_1,\ldots,\s_d$ be stones of different colors. We define the \dfn{stones diagram} of a state $(\sigma,t)$ as follows. Start with a copy of $\Cycle_n$. Place $\s_1,\ldots,\s_d$ on the vertices $t+d,\ldots,t+1$, respectively. Then place each replica $\vv_\ell$ on the vertex $\sigma(v_\ell)$ of $\Cycle_n$; if this vertex already has a stone sitting on it, then we place the replica on top of the stone. 

Suppose we have a timeline $\mathcal T=(\sigma_t,t)_{t\in\ZZ}$. We want to describe how the stones diagrams of the states evolve as we move through the timeline. We will imagine transforming the stones diagram of $(\sigma_{t-1},t-1)$ into that of $(\sigma_t,t)$ via a sequence of $d$ \dfn{small steps}. The $i$-th small step moves $\s_i$ one space clockwise. The labeling $(\theta_{d(t-1)+i}\cdots\theta_{d(t-1)+1})(\sigma_{t-1})$ is obtained from $(\theta_{d(t-1)+i-1}\cdots\theta_{d(t-1)+1})(\sigma_{t-1})$ by applying the toggle operator $\theta_{d(t-1)+i}=\tau_{t+d-i}$. If this operator has no effect (i.e., $(\theta_{d(t-1)+i}\cdots\theta_{d(t-1)+1})(\sigma_{t-1})=(\theta_{d(t-1)+i-1}\cdots\theta_{d(t-1)+1})(\sigma_{t-1})$), then we do not move any of the replicas $\vv_1,\ldots,\vv_n$ during the $i$-th small step (in this case, the stone $\s_i$ slides from underneath one replica to underneath a different replica). Otherwise, $\theta_{d(t-1)+i}$ has the effect of swapping the labels $t+d-i$ and $t+d-i+1$, so we swap the replicas that were sitting on the vertices $t+d-i$ and $t+d-i+1$ (in this case, the stone $\s_{i}$ carries the replica sitting on it along with it as it slides). \Cref{Fig5} illustrates these small steps for a particular example with $n=8$, $d=3$, and $t=1$. 

\begin{figure}[ht]
  \begin{center}{\includegraphics[height=9.3cm]{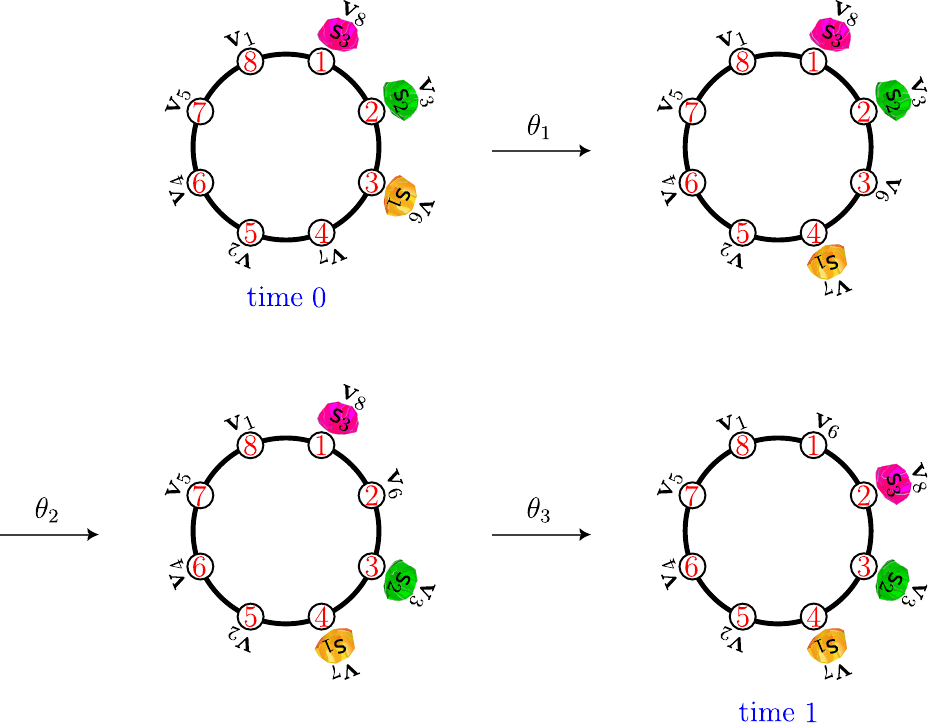}}
  \end{center}
  \caption{The $d=3$ small steps transforming the stones diagram of a state at time $0$ into the stones diagram of the next state at time $1$.}\label{Fig5}
\end{figure}

Now consider $d$ coins of different colors such that the set of colors of the coins is the same as the set of colors of the stones. We define the \dfn{coins diagram} of a state $(\sigma,t)$ as follows. Start with a copy of $\Path_n$. For each $i\in[d]$, there is a replica $\vv_\ell$ sitting on the stone $\s_i$ in the stones diagram of $(\sigma,t)$; place the coin with the same color as the stone $\s_i$ on the vertex $v_\ell$ (see \Cref{FigNewA,Fig6}). Note that the set of vertices of $\Path_n$ occupied by coins is $\{\sigma^{-1}(t+1),\ldots,\sigma^{-1}(t+d)\}$.

Consider how the coins diagrams evolve as we move through a timeline $\mathcal T=(\sigma_t,t)_{t\in\ZZ}$. Let us transform the stones diagram of $(\sigma_{t-1},t-1)$ into that of $(\sigma_t,t)$ via the $d$ small steps described above. Let $\vv_\ell$ be the replica sitting on $\s_i$ right before the $i$-th small step, and let $\vv_{\ell'}$ be the replica sitting on the vertex one step clockwise from $\s_i$ right before the $i$-th small step. When $\s_i$ moves in the $i$-th small step, it will either carry its replica $\vv_\ell$ along with it or slide from underneath $\vv_\ell$ to underneath $\vv_{\ell'}$; the latter occurs if and only if $\ell'=\ell\pm 1$. In the former case, no coins move during the $i$-th small step; in the latter case, a coin moves from $v_\ell$ to the adjacent vertex $v_{\ell'}$ (which did not have a coin on it right before this small step).  

If we watch the coins diagrams evolve as we move through the timeline, then, by the previous paragraph, the coins will move around on $\Path_n$, but they will never move through each other. Therefore, it makes sense to name the coins $\cc_1,\ldots,\cc_d$ in the order they appear along the path from left to right, and this naming only depends on the timeline (not the specific state in the timeline). Define a \dfn{traffic jam} to be a maximal nonempty collection of coins that occupy a contiguous block of vertices (so the vertices occupied by the coins in a particular traffic jam induce a connected subgraph of $\Path_n$). Note that a traffic jam could have just a single coin. We say a traffic jam \dfn{touches a wall} if it contains a coin that occupies $v_1$ or $v_n$. 

At any time, a coin has an idea of the direction in which it expects to move next (our coins are conscious now). Note that this is not necessarily the direction in which it will move next because it may change its mind before it moves. The way that a coin $\cc$ decides which direction it expects to move is as follows. Suppose $\cc$ currently occupies vertex $v_j$, and suppose the coins in the traffic jam containing $\cc$ occupy the vertices $v_r,v_{r+1},\ldots,v_s$. The coin $\cc$ looks at the stones diagram and reads ahead in the clockwise direction, starting from the stone of its color, and it determines whether it first sees $\vv_{r-1}$ or $\vv_{s+1}$. If it first sees $\vv_{r-1}$, it expects to move left; if it first sees $\vv_{s+1}$, it expects to move right. If $r-1$ is not the index of a replica (because $r=1$), the first replica that $\cc$ sees will be $\vv_{s+1}$; similarly, if $s+1$ is not the index of a replica (because $s=n$), the first replica $\cc$ sees will be $\vv_{r-1}$. 

\Cref{FigNewA} shows several stones diagrams and coins diagrams. In each coins diagram, an arrow has been placed over each coin to indicate which direction it expects to move. 

\begin{lemma}\label{lem:move_expected} 
When a coin moves, it moves in the direction that it expects to move. \end{lemma}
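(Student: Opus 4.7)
The plan is to examine what must hold right before the coin actually moves, and then show that the structural constraints force the move direction to coincide with the expected direction.

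Suppose coin $\cc$ moves during the $i$-th small step of the transition from time $t-1$ to time $t$; then by the definition of the small-step procedure, $\s_i$ is the stone under $\cc$ at this moment, so $\cc$ has color $i$. Let $w=t+d-i$ be the position of $\s_i$ on $\Cycle_n$ immediately before this small step, and let $\vv_j$ be the replica on $\s_i$, so $\cc$ sits on $v_j$. Because the coin moves, the toggle $\tau_{t+d-i}$ must act trivially, so $\s_i$ slides out from under $\vv_j$ and ends up under the replica $\vv_{j'}$ currently at $w+1$, and $\cc$ moves to $v_{j'}$. The condition that the toggle acts trivially forces $j'=j\pm 1$, since it requires $\{\sigma^{-1}(w),\sigma^{-1}(w+1)\}=\{j,j'\}$ to be an edge of $\Path_n$.

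The key observation is that the vertex $w+1$ carries no stone at this moment: if $i>1$, then $\s_{i-1}$ advanced from $w+1$ to $w+2$ during the $(i-1)$-th small step, while if $i<d$, then $\s_{i+1}$ has not yet advanced and still sits at $w-1$. Hence $\vv_{j'}$ is not stacked on any stone, which in the coins diagram means $v_{j'}$ is unoccupied. If the traffic jam of $\cc$ at this moment consists of the coins on $v_r,v_{r+1},\ldots,v_s$ with $r\le j\le s$, then $j'\notin\{r,r+1,\ldots,s\}$, and combining with $j'=j\pm 1$ we obtain either $j=r$ and $j'=r-1$, or $j=s$ and $j'=s+1$.

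In both subcases, the replica $\vv_{j'}\in\{\vv_{r-1},\vv_{s+1}\}$ sits at the vertex directly clockwise from $\s_i$, so it is the first of $\vv_{r-1}$ and $\vv_{s+1}$ that $\cc$ encounters when reading clockwise starting from $\s_i$. Therefore the expected direction is left when $j'=r-1$ and right when $j'=s+1$, matching the actual move in each case. The edge cases $r=1$ or $s=n$ cause no issue: the missing replica is simply absent from the clockwise search, and the coin cannot move off the end of the path anyway, so the remaining possibility is consistent. I do not anticipate a real obstacle here; the argument is essentially a direct unpacking of the definitions of the small steps, the traffic jam, and the expected direction, with the small-step bookkeeping for stone positions being the only piece requiring care.
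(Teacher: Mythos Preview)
Your proof is correct and follows essentially the same approach as the paper's: both identify that when a coin moves, the replica $\vv_{j'}$ one step clockwise from the stone has no stone under it (so $v_{j'}$ is unoccupied and $\cc$ sits at an end of its traffic jam), and hence $\vv_{j'}\in\{\vv_{r-1},\vv_{s+1}\}$ is the first relevant replica seen in the clockwise search. The paper's version is terser, fixing the direction (left) at the outset and asserting the key facts directly, whereas you spell out the small-step bookkeeping that shows position $w+1$ is stone-free; but the underlying argument is the same.
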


\begin{proof} 
Suppose $\cc$ occupies $v_j$ and is about to move left. The stone of the same color as $\cc$ is under the
replica $\vv_j$, and the next replica clockwise is $\vv_{j-1}$, which has no stone under it. It follows that $\cc$ is the leftmost coin in a traffic jam. When $\cc$ reads through the stones diagram looking for one of two replicas, it first sees $\vv_{j-1}$, so it indeed expects to move left. The analysis for coins moving to the right is the same. 
\end{proof}

The following lemma tells us under what circumstances a coin can change its mind about which way it is going to move. 

\begin{lemma} 
Let $\cc$ be a coin, and let $\s$ be the stone with the same color as $\cc$. Consider a small step, and let $v_j$ be the vertex occupied by $\cc$ right before the small step. Let $v_{r},v_{r+1},\ldots,v_s$ be the vertices occupied by the coins in the traffic jam that contains $\cc$ right before the small step. During this small step, $\cc$ changes its mind about which direction it expects to move if and only if one of the following situations occurs: 
\begin{itemize}
\item The stone $\s$ slides through $\vv_{r-1}$ or $\vv_{s+1}$ and carries $\vv_j$ along with it as it slides (so $\cc$ does not move in the coins diagram), and the traffic jam containing $\cc$ does not touch a wall (so $1<r\leq s<n$). 
\item The coin $\cc$ moves, and the traffic jam that contains $\cc$ after the small step touches a wall. 
\end{itemize}
\end{lemma}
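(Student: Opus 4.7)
The plan is to express the expected direction of $\cc$ as a comparison of two clockwise distances in the stones diagram and then enumerate cases based on the small step. When $1<r\le s<n$, the coin $\cc$ expects to move left iff the clockwise distance in $\Cycle_n$ from the position of $\s$ to $\vv_{r-1}$ is smaller than the clockwise distance from $\s$ to $\vv_{s+1}$; when the jam touches a wall, only one of these replicas exists, forcing $\cc$'s expected direction away from that wall. Thus a change of expected direction during a small step corresponds to a flip of this comparison or a transition into or out of the wall-forced regime. I split into four cases according to whether the moving stone $\s_i$ equals $\s$ and whether the toggle $\theta_{d(t-1)+i}$ has an effect.

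Suppose $\s_i=\s$. If the toggle has an effect, then $\vv_j$ rides with $\s$ from $p$ to $p+1$, the replica $\vv_m$ previously at $p+1$ falls to $p$, and nothing else changes in either diagram. Consequently every clockwise distance from $\s$ to a replica other than $\vv_m$ decreases by $1$, while the distance to $\vv_m$ jumps from $1$ to $n-1$; using that $d(\s,\vv_{r-1})$ and $d(\s,\vv_{s+1})$ are distinct positive integers, one checks that the comparison flips iff $\vv_m\in\{\vv_{r-1},\vv_{s+1}\}$ and the relevant tail exists (i.e., $1<r\le s<n$), matching the first listed situation. If instead the toggle has no effect, then by \Cref{lem:move_expected} the coin $\cc$ moves in its expected direction, and the replica $\vv_j$ left behind at $p$ becomes the new tail on $\cc$'s departure side, sitting at clockwise distance $n-1$ from the new position $p+1$ of $\s$. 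Hence this new departure-side tail is farther from $\s$ than any other possible tail, so $\cc$ continues to expect its old direction iff the opposite-side tail still exists, i.e., iff the new jam does not touch a wall; this matches the second listed situation.

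Suppose instead $\s_i\ne\s$, so $\s$ and $\vv_j$ remain in place. If the toggle has an effect, no coin moves and no jam changes; the only replica movement is the swap of $\vv_\ell,\vv_{\ell'}$ between $p$ and $p+1$, and $\vv_\ell$ cannot be one of $\vv_{r-1},\vv_{s+1}$ since it sits on the stone $\s_i$ while those two sit off every stone. If $\vv_{\ell'}\in\{\vv_{r-1},\vv_{s+1}\}$, then exactly one of the two tracked distances decreases by $1$; the distinct-positive-integers observation again prevents the comparison from flipping. If the toggle has no effect, then another coin $\cc'$ slides between adjacent vertices of the path while no replica in the stones diagram moves. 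The jam containing $\cc$ may still change, but in every such configuration the old tail is replaced by a new tail whose position in $\Cycle_n$ sits one step counterclockwise of the old tail's position, so the one-step shift in the distance is absorbed without altering the comparison. The main obstacle is this last sub-case: one must enumerate the few configurations in which $\cc'$'s motion actually alters $\cc$'s jam (namely, $\cc'$ moving into or out of a vertex adjacent to or on the boundary of $\cc$'s jam) and verify in each that the new tail replica lies one position away from the old tail in $\Cycle_n$. Combining the four cases gives exactly the two situations listed.
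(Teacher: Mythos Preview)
Your proof is correct and follows essentially the same case analysis as the paper's, though you organize it differently: you partition by whether $\s_i=\s$ and whether the toggle acts nontrivially, whereas the paper first disposes of all small steps with $\s_i\neq\s$ in one line (``it is straightforward to check'') and then treats the $\s_i=\s$ cases narratively. Your explicit clockwise-distance framework makes the verification more mechanical and actually supplies more detail in the $\s_i\neq\s$ cases than the paper does; the enumeration you flag as the ``main obstacle'' in the final sub-case (the four ways $\cc'$ can enter or leave $\cc$'s jam) is exactly what the paper suppresses, and your claim that the replaced tail always sits one step counterclockwise of the old tail is correct in each of those four configurations.
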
 

\begin{proof}
First of all, note that $\cc$ will not change its mind about which way it is going to move except during a small step when $\s$ moves. Indeed, even though the traffic jam containing $\cc$ may change during other small steps, it is straightforward to check that these small steps will not change the direction that $\cc$ expects to move.

While $\cc$ is in a traffic jam that touches a wall, there is only one way that it can expect to move: away from that wall. Thus, it does not change its mind about which way it is moving before it actually moves, but it does change its mind the moment it arrives in the traffic jam (i.e., when the second bulleted item in the statement of the lemma is satisfied).

Now consider a small step during which $\cc$ moves, and suppose the traffic jam that contains $\cc$ after the small step does not touch a wall. For simplicity, let us assume that $\cc$ expects to move left before this small step. Then during the small step, $\cc$ does in fact move left (by \cref{lem:move_expected}). Let us say $\cc$ moves from $v_j$ to $v_{j-1}$. Then during this small step, $\s$ slides from underneath the replica $\vv_j$ to underneath the replica $\vv_{j-1}$. After the small step, the vertices occupied by the coins in the traffic jam containing $\cc$ are $v_r,v_{r+1},\ldots,v_{j-1}$ for some $r\in\{2,\ldots,j-1\}$, so $\cc$ looks in the stones diagram for either $\vv_{r-1}$ or $\vv_j$. It will certainly see $\vv_{r-1}$ first since $\vv_j$ is one step behind $\s$ (in the clockwise order) at this time. Thus, $\cc$ still expects to move left after the small step.  

Finally, consider the situation from the first bulleted item in the statement of the lemma. Let us again assume for simplicity that $\cc$ expects to move left before the small step. Then $\s$ slides through $\vv_{r-1}$. After the small step, when $\cc$ reads through the stones diagram to determine which direction it expects to move, it again searches for $\vv_{r-1}$ and $\vv_{s+1}$ (because no coins moved during the small step). It will see $\vv_{s+1}$ before $\vv_{r-1}$ because $\vv_{r-1}$ is now right behind $\s$ in the clockwise order. So $\cc$ expects to move right after the small step. 
\end{proof} 

The importance of understanding the direction in which a coin expects to move is that it will enable us to understand \dfn{collisions}. 
There are \dfn{two-coins collisions}, which involve two coins that occupy adjacent vertices of $\Path_n$; there are \dfn{left-wall collisions}, which can occur when $\cc_1$ occupies $v_1$; and there are \dfn{right-wall collisions}, which can occur when $\cc_d$ occupies $v_n$. 
The prototypical examples of collisions are when two non-adjacent coins move to become adjacent or when a coin moves to become adjacent to a wall, but other examples are possible when traffic jams of size greater than one are involved.

The precise definition of a two-coins collision that occurs in a traffic jam that does not touch a wall is as follows. We say coins $\cc_i$ and $\cc_{i+1}$ are \dfn{butting heads} if they occupy adjacent vertices and $\cc_i$ expects to move right while $\cc_{i+1}$ expects to move left. We say $\cc_{i}$ and $\cc_{i+1}$ are involved in a two-coins collision at a small step if they are not butting heads immediately before the small step and they are butting heads immediately after the small step. 
This can happen either because the two coins were not adjacent prior to the small step, but it can also happen because the two coins were adjacent but one of them changed its mind about the direction it expected to move. 

The definition has to be slightly modified in a traffic jam that touches a wall. Consider first the case when a small step occurs during which a coin $\cc$ moves so as to join a traffic jam that touches the wall. At the same time, $\cc$ changes its mind so that it now expects to move away from the wall that the traffic jam touches. Nonetheless, if there is a coin $\cc'$ adjacent to $\cc$ after the small step, we still count this as a two-coins collision between $\cc$ and $\cc'$. (We can imagine that there was a brief instant of time right after $\cc$ moved to join the traffic jam but right before it changed its mind about which way it expected to move, thus resulting in $\cc$ butting heads with $\cc'$ very briefly.)
Similarly, if $\cc$ moved onto $v_1$ (respectively, $v_n$) during this small step (so it is in a traffic jam of size $1$ that touches a wall), then we count this as a left-wall (respectively, right-wall) collision. 

We now discuss how to define a collision that occurs in the ``interior'' of a traffic jam of size at least $2$ that touches a wall. In such a traffic jam, all the coins always want to move away from the wall, so by the above definition, there would be no collisions within the traffic jam. However, this is not what we want. Instead, suppose we are considering a coin $\cc_i$ that occupies $v_j$. Assume the coins in the traffic jam containing $\cc_i$ occupy vertices $v_{1},v_2,\ldots,v_k$, where $j<k$. Thus, we are assuming the traffic jam touches the left wall, but the symmetrical considerations apply if the traffic jam touches the right wall.
The stone with the same color as $\cc_i$ carries the replica
$\vv_{j}$. Suppose there is a small step during which the stone with the same color as $\cc_i$ slides through $\vv_{k+1}$, carrying $\vv_j$ along with it as it slides. Note that $\cc_i$ does not move during this small step. In this case, we say $\cc_i$ collides with $\cc_{i-1}$ (or is involved in a left-wall collision if $i=1$). To explain heuristically why this collision occurs, we can imagine that $\cc_i$ has a ``flicker of confusion'' when it sees the stone with its same color slide through $\vv_{k+1}$. When it sees this, $\cc_i$ ``thinks'' it should change its mind and expect to move left. But then it realizes that it cannot expect to move left because it is in a traffic jam that touches the left wall, so it quickly goes back to expecting to move right. During this brief instant, the collision occurs because $\cc_i$ ``thinks'' it should be butting heads with $\cc_{i-1}$ (or with the left wall if $i=1$).  

We say a collision occurs at time $t$ if it occurs during a small step between times $t-1$ and $t$.

\begin{example}
Suppose $n=6$ and $d=3$. \Cref{FigNewA} shows some stones diagrams and coins diagrams evolving over time. At each stage, the arrow over a coin points in the direction that the coin expects to move. Collisions are indicated in the coins diagrams by stars, and each star is colored to indicate which stone moves in the small step during which the collision occurs. Note that the right-wall collision at time $5$ (marked with a gold star in the first small step after time $4$) occurs because $\cc_3$ has a ``flicker of confusion'' when the gold stone $\s_1$ slides through $\vv_4$ (carrying $\vv_6$ along with it as it slides). \hfill $\lozenge$
\end{example}
\begin{figure}
  \begin{center}{\includegraphics[width=\linewidth]{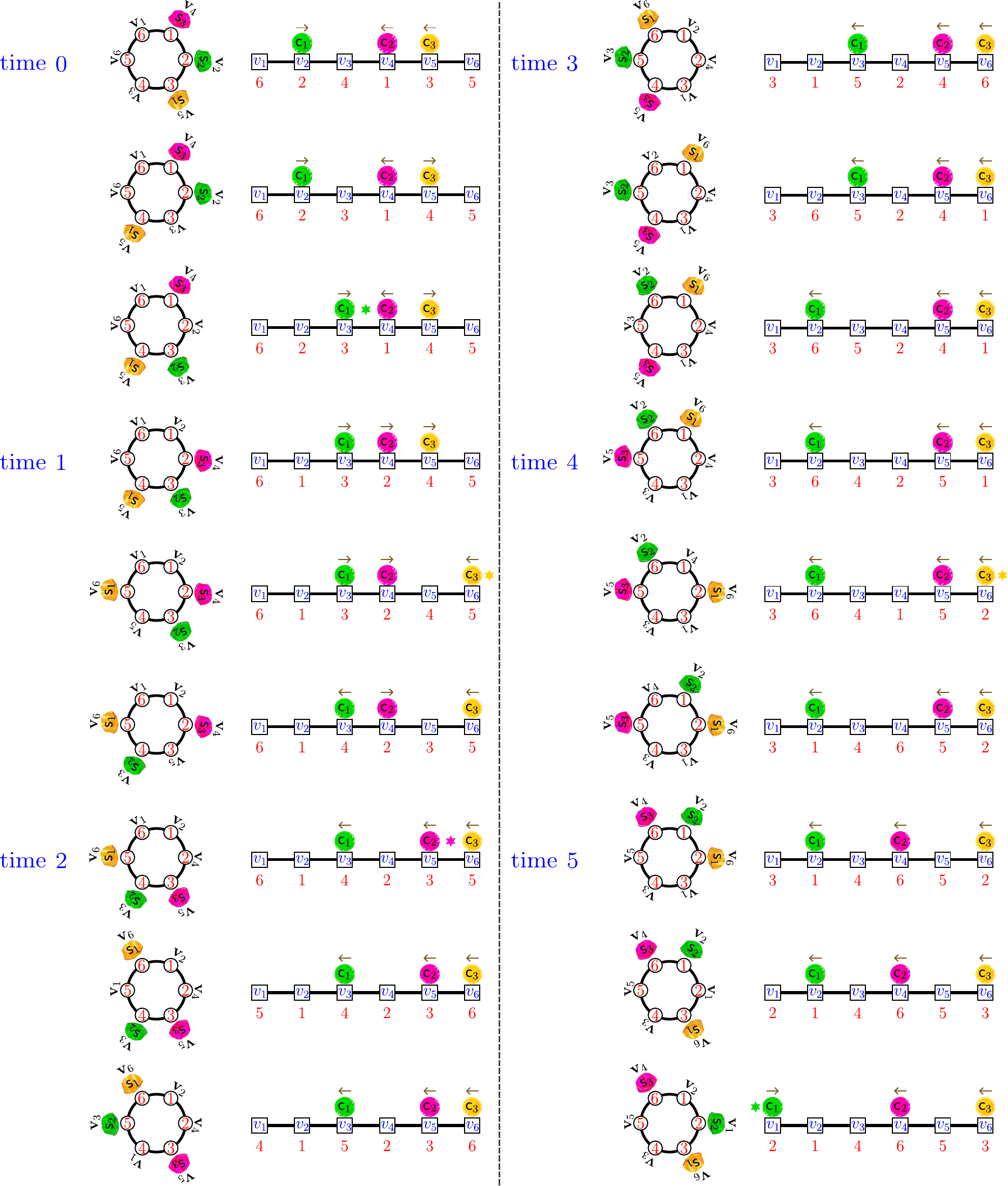}}
  \end{center}
\caption{The evolution of stones diagrams and coins diagrams over time, with each individual small step illustrated. At each moment, we have drawn an arrow over each coin to indicate which direction it expects to move. Each collision is indicated by a star whose color is the same as that of the stone that moved to cause the collision. Each labeling is depicted in red numbers below the path.}\label{FigNewA}
\end{figure}

\begin{example}\label{exam:3}
Suppose $n=6$ and $d=3$. \Cref{Fig6} shows the stones diagrams and coins diagrams of a particular timeline at times $0,1,\ldots,17$. For brevity, we have not shown the individual small steps. All of the collisions the occur at time $t$ (i.e., during the small steps between time $t-1$ and time $t$) are indicated in the coins diagram at time $t$. The color of the star can be used to determine the small step during which the collision occurs. One can check that the states in this timeline are periodic with period $18$. 
\hfill $\lozenge$
\end{example}

\begin{figure}
  \begin{center}{\includegraphics[width=\linewidth]{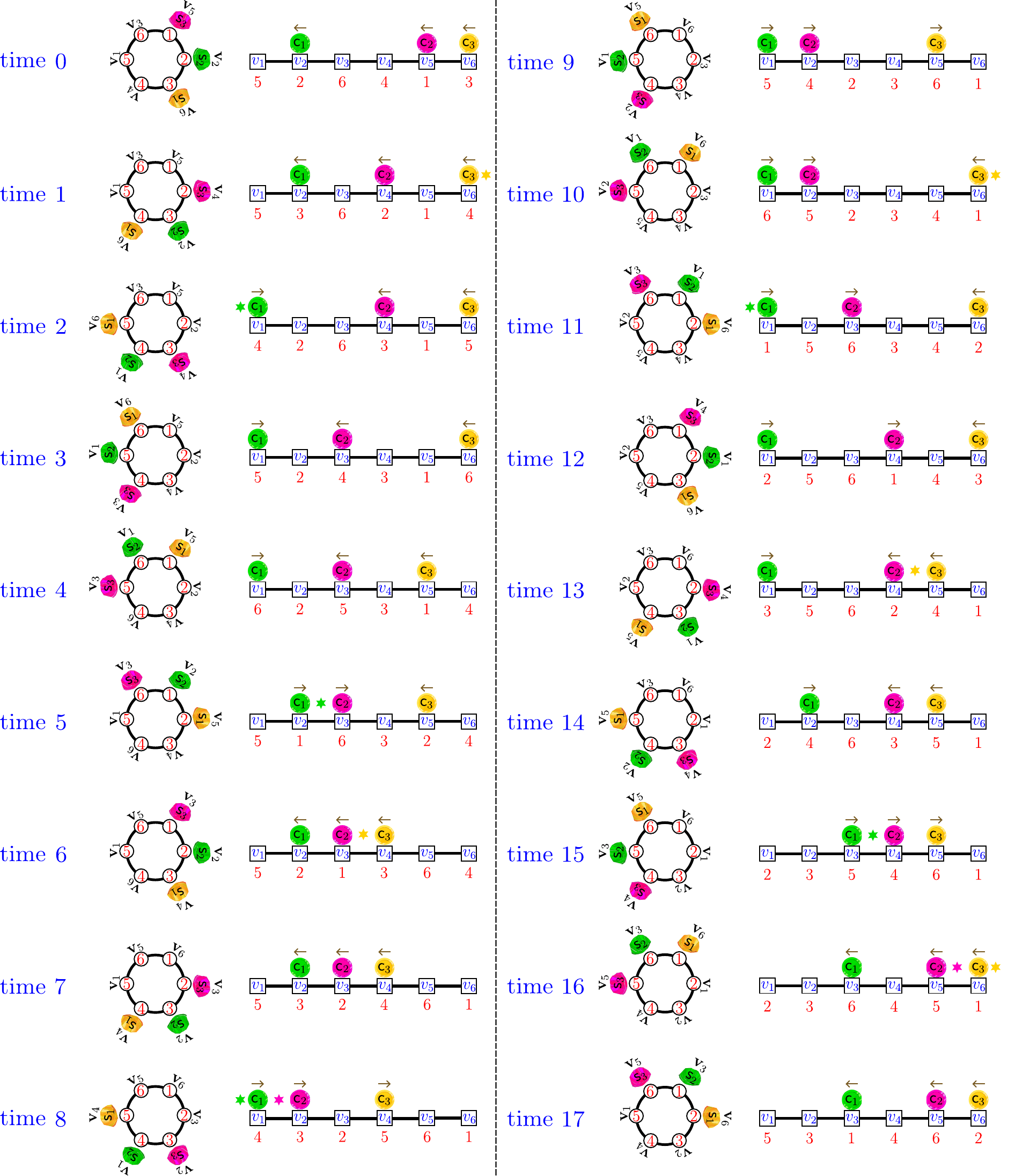}}
  \end{center}
\caption{The stones diagrams and coins diagrams of the states in a timeline at times $0,1,\ldots,17$. Here, $n=6$ and $d=3$. The collisions that occur during the small steps between times $t-1$ and $t$ are represented by color-coded stars in the coins diagram at time $t$. Each labeling is depicted by the red numbers below the path. }\label{Fig6}
\end{figure}
 
Let $\Coll_{\mathcal T}$ be the set of all collisions that take place in the coins diagrams of the states of the timeline $\mathcal T$. We define a directed graph with vertex set $\Coll_{\mathcal T}$ by drawing an arrow from a collision $\kappa$ to a collision $\kappa'$ whenever there is a coin involved in both $\kappa$ and $\kappa'$ and the collision $\kappa$ occurs before $\kappa'$. Let $(\Coll_{\mathcal T},\leq_{\mathcal T})$ be the transitive closure of this directed graph. Let ${\bf H}_{\mathcal T}$ be the Hasse diagram of $(\Coll_{\mathcal T},\leq_{\mathcal T})$. This Hasse diagram, which will be one of our primary tools, has the shape of a bi-infinite chain link fence (see \Cref{Fig7}). Suppose $\kappa_1\lessdot_{\mathcal T}\kappa_2$ is an edge in ${\bf H}_{\mathcal T}$. Then $\kappa_1$ and $\kappa_2$ are collisions that both use some coin $\cc$; we define the \dfn{energy} of this edge, denoted $\mathcal E(\kappa_1\lessdot_{\mathcal T}\kappa_2)$, to be the number of different vertices that $\cc$ occupies between these two collisions, including the vertices occupied by $\cc$ when the collisions occur. More generally, if $\kappa_1\lessdot_{\mathcal T}\kappa_2\lessdot_{\mathcal T}\cdots\lessdot_{\mathcal T}\kappa_r$ is a saturated chain in ${\bf H}_{\mathcal T}$, then we write $\mathcal E(\kappa_1\lessdot_{\mathcal T}\kappa_2\lessdot_{\mathcal T}\cdots\lessdot_{\mathcal T}\kappa_r)$ for the tuple $(\mathcal E(\kappa_1\lessdot_{\mathcal T}\kappa_2),\ldots,\mathcal E(\kappa_{r-1}\lessdot_{\mathcal T}\kappa_r))$ of energies of the edges in the chain.

\begin{example}\label{exam:4}
If $\mathcal T$ is the timeline containing the states whose stones diagrams and coins diagrams are shown in \Cref{Fig6}, then (a finite part of) ${\bf H}_{\mathcal T}$ is shown in \Cref{Fig7}. Each collision is represented by a color-coded star, and the blue number inside the star is the time when the collision occurs. Each edge is labeled by its energy.  
\hfill $\lozenge$ 
\end{example}

\begin{figure}[ht]
  \begin{center}{\includegraphics[height=11.322cm]{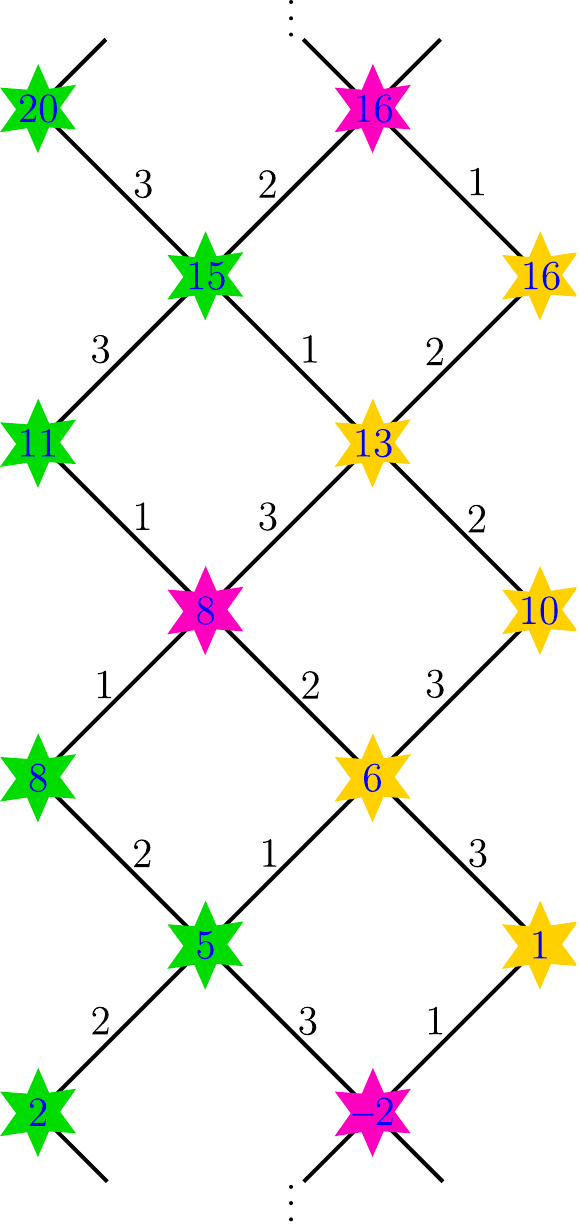}}
  \end{center}
\caption{A Hasse diagram ${\bf H}_{\mathcal T}$. Each collision is represented by a star whose color is the same as that of the stone that moved to cause the collision. Blue numbers indicate the times when the collisions occur. Edges are labeled by their energies. }\label{Fig7}
\end{figure}

A \dfn{diamond} in ${\bf H}_{\mathcal T}$ consists of collisions $\kappa_1,\kappa_2,\kappa_3,\kappa_4$ together with four edges given by cover relations $\kappa_1\lessdot_{\mathcal T}\kappa_2$, $\kappa_1\lessdot_{\mathcal T}\kappa_3$, $\kappa_2\lessdot_{\mathcal T}\kappa_4$, $\kappa_3\lessdot_{\mathcal T}\kappa_4$. 
A \dfn{half-diamond} in ${\bf H}_{\mathcal T}$ consists of collisions $\kappa_1',\kappa_2',\kappa_3'$, where $\kappa_1'$ and $\kappa_3'$ are either both left-wall collisions or both right-wall collisions, together with two edges given by cover relations $\kappa_1'\lessdot_{\mathcal T}\kappa_2'$ and $\kappa_2'\lessdot_{\mathcal T}\kappa_3'$. 
 Our arguments in the next subsection rest on the following three lemmas.

\begin{lemma}\label{lem:half-diamond}
In any half-diamond in the Hasse diagram $\bf H_{\mathcal T}$, the two edges have the same energy. 
\end{lemma}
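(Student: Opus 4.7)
Without loss of generality, assume $\kappa_1'$ and $\kappa_3'$ are both left-wall collisions; the right-wall case is entirely symmetric. Because a left-wall collision involves only the coin $\cc_1$, the cover relations $\kappa_1'\lessdot_{\mathcal T}\kappa_2'\lessdot_{\mathcal T}\kappa_3'$ force $\kappa_2'$ to also involve $\cc_1$, and consequently the energy of each of the two edges is just the number of distinct vertices that $\cc_1$ occupies during the corresponding time interval. Just after $\kappa_1'$ and just after $\kappa_3'$, the coin $\cc_1$ sits at $v_1$ and expects to move rightward.

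The plan is to show that between $\kappa_1'$ and $\kappa_2'$, the coin $\cc_1$ travels monotonically rightward from $v_1$ to a position $v_j$ (which is exactly its position just after $\kappa_2'$), and that between $\kappa_2'$ and $\kappa_3'$ it travels monotonically leftward from $v_j$ back to $v_1$. Both intervals will then visit exactly $\{v_1,v_2,\ldots,v_j\}$, so both edges have energy $j$. Monotonicity will follow from the preceding mind-change lemma: $\cc_1$ can change its expected direction only via situation (i) (while in a traffic jam not touching a wall) or via situation (ii), which amounts to $\cc_1$ being involved in a collision. The cover relations forbid any collision involving $\cc_1$ strictly between the three listed collisions, ruling out situation (ii) in the open intervals. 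Moreover, when $\cc_1$ is alone at a vertex $v_r$ with $r>1$, the adjacent replicas $\vv_{r-1}$ and $\vv_{r+1}$ sit on vertices adjacent to $v_r$ in $\Path_n$, so the corresponding toggles are blocked and the stone of $\cc_1$'s color slides from underneath $\vv_r$ rather than carrying it through; this rules out situation (i) whenever $\cc_1$ is alone. Hence $\cc_1$ can reverse direction only while in a traffic jam with $\cc_2$.

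The argument finishes with a short case analysis of what happens when $\cc_1$ first meets $\cc_2$ after $\kappa_1'$. Either $\cc_2$ changes its mind before $\cc_1$ does, so the two coins become butting heads and produce a two-coins collision (which must be $\kappa_2'$); or $\cc_1$ changes its mind first, in which case it subsequently leaves the jam alone and, being unable to reverse again, moves monotonically back to $v_1$, producing a left-wall collision that must then be $\kappa_2'$. In either subcase the vertex $v_j$ at which $\cc_1$ and $\cc_2$ first met is the common turnaround of $\cc_1$'s motion, and the same analysis applied to the interval between $\kappa_2'$ and $\kappa_3'$ identifies the same turnaround vertex, yielding equal energies. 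The hardest part of the proof is verifying this last point in the case where $\kappa_2'$ is itself a left-wall collision: one must track the joint dynamics of $\cc_1$ and $\cc_2$ across two consecutive excursions from the wall and check that the stones-diagram alignment which triggers $\cc_1$'s mind-change via situation (i) recurs with the same outcome in the second excursion, forcing the two turnaround vertices to coincide.
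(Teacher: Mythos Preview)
Your argument is much heavier than what the lemma actually requires, and in the end it is not a proof at all but a sketch that you yourself flag as incomplete.  The paper's proof is two sentences: since $\kappa_1'$ and $\kappa_3'$ are (say) left-wall collisions, $\cc_1$ sits at $v_1$ at both of them; if $\cc_1$ sits at $v_m$ at $\kappa_2'$, then each edge has energy $m$.  The point you are working so hard to justify---that $\cc_1$ visits exactly $\{v_1,\dots,v_m\}$ between consecutive collisions---is being taken as immediate from the coin dynamics, and in any case is the same assertion for both edges of the half-diamond, so the lemma follows by symmetry of the two intervals without a separate monotonicity analysis of each.

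More seriously, your case analysis introduces a scenario that cannot occur.  You allow for the possibility that $\kappa_2'$ is itself a left-wall collision and then say that verifying equal energies in that case is ``the hardest part of the proof,'' which you leave undone.  But the Hasse diagram ${\bf H}_{\mathcal T}$ has the chain-link-fence shape asserted just before the lemma: a left-wall collision is covered only by the next collision involving $\cc_1$, and along the left boundary these alternate between left-wall collisions and $\cc_1$--$\cc_2$ collisions.  Hence if $\kappa_1'\lessdot_{\mathcal T}\kappa_2'\lessdot_{\mathcal T}\kappa_3'$ with $\kappa_1',\kappa_3'$ left-wall, then $\kappa_2'$ is forced to be a $\cc_1$--$\cc_2$ collision (or a right-wall collision when $d=1$), and your ``hardest case'' is vacuous.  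Similarly, the sentence ``the adjacent replicas $\vv_{r-1}$ and $\vv_{r+1}$ sit on vertices adjacent to $v_r$ in $\Path_n$'' conflates the stones diagram (on $\Cycle_n$) with the coins diagram (on $\Path_n$); what you mean is that $v_{r\pm1}$ are adjacent to $v_r$ in $\Path_n$, so the relevant toggle is blocked.  Once you drop the spurious case and accept the fence structure, the whole argument collapses to the paper's two-line observation.
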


\begin{proof}
Fix a half-diamond in ${\bf H}_{\mathcal T}$ with edges given by the cover relations $\kappa_1'\lessdot_{\mathcal T}\kappa_2'\lessdot_{\mathcal T}\kappa_3'$. By symmetry, we may assume $\kappa_1'$ and $\kappa_3'$ are both left-wall collisions. Then $\kappa_1'$ and $\kappa_3'$ occur when $\cc_1$ occupies $v_1$. Say $\kappa_2'$ occurs when $\cc_1$ occupies $v_m$. Both edges of the half-diamond have energy $m$.  
\end{proof}

\begin{lemma}\label{lem:diamond}
In any diamond in the Hasse diagram $\bf H_{\mathcal T}$, opposite edges have the same energy. 
\end{lemma}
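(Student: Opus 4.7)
The plan is to first classify the possible diamonds, reduce the lemma to a single arithmetic identity in the coin positions, and finally establish that identity by comparing the motions of the two coins.

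\emph{Step 1 (structure of a diamond).} I claim that in any diamond, the bottom element $\kappa_1$ and top element $\kappa_4$ must both be two-coins collisions involving the same pair of consecutive coins $\cc_i$ and $\cc_{i+1}$. Indeed, a wall collision involves only one coin, so it can cover at most one next collision in $\mathbf{H}_{\mathcal{T}}$; hence $\kappa_1$ must involve two coins. After $\kappa_1$, the coin $\cc_i$ moves leftward and reaches its next collision $\kappa_2$ (either a left-wall collision or a two-coins collision with $\cc_{i-1}$), while $\cc_{i+1}$ moves rightward to its next collision $\kappa_3$ (with $\cc_{i+2}$ or the right wall). Since $\kappa_2$ covers $\kappa_4$ in $\mathbf{H}_{\mathcal{T}}$, $\kappa_4$ must be the next collision of $\cc_i$ after $\kappa_2$; similarly $\kappa_4$ is the next collision of $\cc_{i+1}$ after $\kappa_3$. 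Thus $\kappa_4$ involves both $\cc_i$ and $\cc_{i+1}$, and, since neither coin has an intervening collision after bouncing off $\kappa_2$ or $\kappa_3$, they must meet each other: $\kappa_4$ is a two-coins collision between them.

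\emph{Step 2 (coordinatize).} Place $\cc_i$ at $v_j$ at time of $\kappa_1$, at $v_a$ at time of $\kappa_2$, and at $v_c$ at time of $\kappa_4$; place $\cc_{i+1}$ at $v_{j+1}$, $v_b$, $v_{c+1}$ at times of $\kappa_1,\kappa_3,\kappa_4$ respectively, where $a\leq j$, $b\geq j+1$, and $a\leq c\leq b-1$. Using the definition of energy as the number of distinct vertices occupied by the shared coin between the two collisions (inclusive), the four edge energies are
\[
\mathcal{E}(\kappa_1\lessdot\kappa_2)=j-a+1,\quad \mathcal{E}(\kappa_1\lessdot\kappa_3)=b-j,\quad \mathcal{E}(\kappa_2\lessdot\kappa_4)=c-a+1,\quad \mathcal{E}(\kappa_3\lessdot\kappa_4)=b-c.
\]
Setting opposite pairs equal, both $\mathcal{E}(\kappa_1\lessdot\kappa_2)=\mathcal{E}(\kappa_3\lessdot\kappa_4)$ and $\mathcal{E}(\kappa_1\lessdot\kappa_3)=\mathcal{E}(\kappa_2\lessdot\kappa_4)$ reduce to the single identity
\[
a+b=j+c+1.
\]

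\emph{Step 3 (the identity).} I would prove $a+b=j+c+1$ by showing that $\cc_i$ and $\cc_{i+1}$ traverse the same total number of edges in the interval between $\kappa_1$ and $\kappa_4$. Explicitly, $\cc_i$ makes $(j-a)+(c-a)$ unit moves and $\cc_{i+1}$ makes $(b-j-1)+(b-c-1)$, and equating these gives the required identity after simplification.

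\emph{The main obstacle} is Step 3, which compares the number of moves of the two coins over $[t_1,t_4]$. The plan is to exhibit a bijection between small steps in this interval during which $\cc_i$ moves and those during which $\cc_{i+1}$ moves. The key structural input is that, between $\kappa_1$ and $\kappa_4$, neither $\cc_i$ nor $\cc_{i+1}$ has any interaction with the other (only with third parties at $\kappa_2$ and $\kappa_3$, respectively); together with the cyclic order in which the stones act, this should let us match up the "productive" small steps of the two relevant stones round by round. I would establish the bijection by a careful case analysis on the local configuration (using Lemmas 5.8 and 5.9 to track when each coin changes its expected direction), so that in each window of $d$ consecutive small steps between the two free intervals, the stone of $\cc_i$ moves its coin if and only if the stone of $\cc_{i+1}$ moves its coin, with a bookkeeping adjustment to handle the offsets around the reflection times $t_2$ and $t_3$. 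An alternative approach, which may be cleaner, is to compare the stones diagram at times $t_1$ and $t_4$: at both moments the stones of colors matching $\cc_i$ and $\cc_{i+1}$ occupy adjacent positions on $\Cycle_n$ in a specific way, and extracting a conservation law from this repetition directly yields $a+b=j+c+1$.
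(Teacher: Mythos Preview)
Your Steps 1 and 2 are correct and coincide with the paper's setup (with $j,a,b,c$ in place of the paper's $m,k,\ell,k+\ell-m-1$); the reduction to the single identity $a+b=j+c+1$ is exactly what the paper also needs to establish.

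The gap is in Step 3. Observing that ``equal number of moves'' is equivalent to $a+b=j+c+1$ is correct but tautological: the total move counts are $(j-a)+(c-a)$ and $(b-j-1)+(b-c-1)$, and setting them equal \emph{is} the identity. So the actual work lies entirely in your proposed bijection or conservation law, neither of which you execute. In particular, the claim that ``in each window of $d$ consecutive small steps the stone of $\cc_i$ moves its coin if and only if the stone of $\cc_{i+1}$ does'' is not true globally on $[t_1,t_4]$ (the two coins can be out of phase while one is still heading toward its reflection and the other has already bounced), and you do not spell out the bookkeeping that would repair this.

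The paper's approach is more concrete: rather than counting moves, it tracks the \emph{time} each coin spends on each vertex. Writing $\zeta_j$ for the time $\cc_i$ spends on $v_j$ and $\xi_{j'}$ for the time $\cc_{i+1}$ spends on $v_{j'}$, the paper asserts (from the stones diagram) that $\zeta_j=\xi_{j'}=n-d$ on the outer ranges where each coin is alone between its reflection and the other coin's old position, and $\zeta_j=\xi_{j+1}$ on the overlapping middle range. Summing these equalities pins down the exact time and position at which the two coins reunite, yielding $c=a+b-j-1$ directly. This vertex-by-vertex time matching is the missing mechanism in your plan; your bijection idea, if carried out, would likely have to rediscover essentially the same $\zeta_j=\xi_{j+1}$ correspondence.
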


\begin{proof}
Fix a diamond in ${\bf H}_{\mathcal T}$ with edges given by cover relations $\kappa_1\lessdot_{\mathcal T}\kappa_2$, $\kappa_1\lessdot_{\mathcal T}\kappa_3$, $\kappa_2\lessdot_{\mathcal T}\kappa_4$, $\kappa_3\lessdot_{\mathcal T}\kappa_4$. Say the collision $\kappa_1$ involves coins $\cc_i$ and $\cc_{i+1}$ and takes place when $\cc_i$ occupies vertex $v_m$ and $\cc_{i+1}$ occupies vertex $v_{m+1}$. Without loss of generality, suppose $\kappa_2$ involves $\cc_i$ and $\kappa_3$ involves $\cc_{i+1}$. Then $\kappa_2$ occurs when $\cc_i$ occupies some vertex $v_k$ with $k\leq m$, and $\kappa_3$ occurs when $\cc_{i+1}$ occupies some vertex $v_\ell$ with $\ell\geq m+1$. The collision $\kappa_4$ involves the coins $\cc_i$ and $\cc_{i+1}$. We claim that $\kappa_4$ occurs when $\cc_i$ occupies $v_{k+\ell-m-1}$ and $\cc_{i+1}$ occupies $v_{k+\ell-m}$; this will imply that the edges $\kappa_1\lessdot_{\mathcal T}\kappa_2$ and $\kappa_3\lessdot_{\mathcal T} \kappa_4$ both have energy $m-k+1$ and that the edges $\kappa_1\lessdot_{\mathcal T}\kappa_3$ and $\kappa_2\lessdot_{\mathcal T} \kappa_4$ both have energy $\ell-m$.

By symmetry, we may assume that $m-k\leq \ell-m-1$. Let $x,y\in[d]$ be such that $\s_x$ and $\s_y$ are the stones with the same colors as $\cc_i$ and $\cc_{i+1}$, respectively. 
Consider starting at the time when $\kappa_1$ occurs and watching the stones diagrams and coins diagrams evolve as we move forward in time. We will assume that $k<m$, that $x>y$, and that $\cc_i$ moves from $v_m$ to $v_{m-1}$ before $\cc_{i+1}$ moves from $v_{m+1}$ to $v_{m+2}$; the other cases are similar. Let $t_0$ be the first time after the collision $\kappa_1$ when $\cc_i$ moves from $v_m$ to $v_{m-1}$. The coin $\cc_i$ will move from $v_m$ to $v_{m-1}$ and then to $v_{m-2}$ and so on until reaching $v_k$; it will then turn around and move back across $v_{k+1},\ldots,v_m$ and then continue on toward $v_{k+\ell-m-1}$. For $j\in\{k,\ldots,k+\ell-m-1\}$, let $\zeta_j$ be the amount of time that $\cc_i$ spends on $v_j$ during this trip. The coin $\cc_{i+1}$ stays on $v_{m+1}$ for some time after $t_0$; it then moves to the right until reaching $v_{\ell}$, where it turns around and heads back to $v_{k+\ell-m+1}$. For $j'\in\{m+1,\ldots,\ell\}$, let $\xi_{j'}$ be the amount of time after $t_0$ that $\cc_{i+1}$ spends on $v_{j'}$ during this trip.

By analyzing the stones diagrams, one can show that $\zeta_j=\xi_{j'}=n-d$ for all $j\in\{k,\ldots,m-1\}$ and all $j'\in\{k+\ell-m+1,\ldots,\ell\}$. Similarly, $\zeta_j=\xi_{j+1}$ for all $j\in\{m,\ldots,k+\ell-m-1\}$. Let $N=(m-k)(n-d)+\sum_{j=m}^{k+\ell-m-1}\zeta_j$. At time $t_0+N$, either the coin $\cc_i$ moves from $v_{k+\ell-m-1}$ to $v_{k+\ell-m}$, or the coin $\cc_{i+1}$ moves from $v_{k+\ell-m+1}$ to $v_{k+\ell-m}$. It follows from the assumption that $x>y$ that, in fact, $\cc_{i+1}$ moves from $v_{k+\ell-m+1}$ to $v_{k+\ell-m}$ at time $t_0+N$. This proves the claim.  
\end{proof}

\begin{example}\label{exam:2}
Suppose $n=6$ and $d=3$, and let $\mathcal T$ be the timeline from \Cref{exam:3,exam:4}. Let $\kappa_1,\kappa_3,\kappa_4$ be the collisions that occur at times $6,10,13$, respectively, and let $\kappa_2$ be the two-coins collision at time $8$. In the notation of the proof of \Cref{lem:diamond}, we have $i=2$, $m=3$, $k=2$, $\ell=6$, and $t_0=8$. We have $\zeta_2=\xi_6=3=n-d$, $\zeta_3=\xi_4=1$, and $\zeta_4=\xi_5=1$. Thus, $N=(m-k)(n-d)+\sum_{j=m}^{k+\ell-m-1}\zeta_j=5$. As explained in the proof of \Cref{lem:diamond}, the coin $\cc_3$ moves from $v_6$ to $v_5$ at time $t_0+N=13$. 
\hfill $\lozenge$
\end{example}

\begin{lemma} \label{lem:new}
If the two edges of a half-diamond in the Hasse diagram $\bf H_{\mathcal T}$ have energy $m$, then the amount of time between the collisions at the bottom and the top of the half-diamond is $m(n-d)$.
\end{lemma}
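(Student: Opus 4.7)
The plan is to adapt the stones-diagram timing analysis from the proof of \Cref{lem:diamond} to the half-diamond setting. Using the symmetry observed in the proof of \Cref{lem:half-diamond}, I will assume that $\kappa_1'$ and $\kappa_3'$ are both left-wall collisions; the right-wall case is analogous. Then $\kappa_1'$ and $\kappa_3'$ both occur while $\cc_1$ occupies $v_1$, and $\kappa_2'$ occurs while $\cc_1$ occupies $v_m$. Between $\kappa_1'$ and $\kappa_3'$, the coin $\cc_1$ executes the round trip $v_1\to v_2\to\cdots\to v_m\to v_{m-1}\to\cdots\to v_1$. Let $t_1'$ and $t_3'$ be the times of $\kappa_1'$ and $\kappa_3'$, and for $j\in[m]$ let $T_j$ be the total amount of time during the open interval $(t_1',t_3')$ that $\cc_1$ occupies $v_j$. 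Then $t_3'-t_1'=\sum_{j=1}^m T_j$.

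The central step will be to show that $T_j = n-d$ for every $j\in[m]$, which is the half-diamond analogue of the identities $\zeta_j = n-d$ that appeared in the proof of \Cref{lem:diamond}. To prove it, I would track the stone $\s_x$ of the same color as $\cc_1$ and count, over each revolution of $\s_x$ around $\Cycle_n$, how many of its clockwise small steps land on a replica with index adjacent to the one $\s_x$ currently carries (producing a movement of $\cc_1$) versus on non-adjacent replicas (a mere swap). Here the left wall $v_1$ plays the role of the inner turning vertex $v_k$ from the proof of \Cref{lem:diamond}, while the outer collision site $v_m$ plays the role of the reflecting second coin. Once the per-vertex identity $T_j = n-d$ is established, summing over $j = 1, \ldots, m$ yields $t_3' - t_1' = m(n-d)$.

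The main obstacle will be verifying $T_j = n-d$ at the two ``endpoint'' vertices $v_1$ and $v_m$, since the dynamics near these vertices differ in form from the two-coin configuration handled in \Cref{lem:diamond}. Near $v_1$, the subtlety is that $\kappa_1'$ and $\kappa_3'$ may arise either from $\cc_1$ moving onto $v_1$ from $v_2$ or from a ``flicker of confusion'' inside a traffic jam that touches the left wall, so the stones-diagram accounting over $\cc_1$'s sojourn at $v_1$ must handle both possibilities uniformly. Near $v_m$, the accounting must be carried out uniformly across the possible types of $\kappa_2'$ (a two-coins collision with $\cc_2$, or an interior flicker-of-confusion collision inside a traffic jam one of whose coins is $\cc_1$). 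The doubly visited interior vertices $v_2,\ldots,v_{m-1}$ should then follow by direct transcription of the revolution-counting argument already carried out in \Cref{lem:diamond}.
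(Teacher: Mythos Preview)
Your proposal is correct and follows essentially the same approach as the paper: reduce to the left-wall case, decompose $t_3'-t_1'$ as the sum of sojourn times $T_j$ of $\cc_1$ on $v_1,\ldots,v_m$, and argue that each $T_j=n-d$ by the same stones-diagram revolution-counting that underlies \Cref{lem:diamond}. The only organizational difference is that the paper treats $m=1$ as a separate case (where $\cc_1$ never leaves $v_1$, so the round-trip picture degenerates), giving a direct argument based on the invariance of the cyclic order of the non-stone replicas; you instead absorb this case into your endpoint analysis at $v_1$, which amounts to the same computation.
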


\begin{proof}
Without loss of generality, assume the bottom and top collisions in the half-diamond are left-wall collisions. 

If $m>1$, the same style of argument as in the proof of \cref{lem:diamond} proves that the total amount of time that $\cc_1$ spends on each of the vertices $v_1,\ldots,v_m$ is 
exactly $n-d$, which proves the claim.

If $m=1$, a similar argument also applies. 
Let $\s$ be the stone with the same color as $\cc_1$. A left-wall collision can only occur during a small step in which $\s$ moves. Moreover, such a small step results in a left-wall collision if and only if, right before the small step occurs, the replica one space clockwise of $\s$ is $\vv_{\ell}$, where $\ell$ is the smallest of all the indices of replicas that do not sit on stones at that time (equivalently, the traffic jam containing $\cc_1$ has size $\ell-1$). The relative cyclic order of the indices of the replicas that do not sit on stones remains constant over time, so the time between these left-wall collisions is exactly $n-d$. 
\end{proof}

\subsection{The Map $\Omega$}
Equipped with \cref{lem:half-diamond,lem:diamond,lem:new}, we now turn to constructing and analyzing the map $\Omega$ from \Cref{prop:PhiRot}.

For each collision $\kappa\in\Coll_{\mathcal T}$, let $\varphi(\kappa)$ be the collision involving the same set of coins as $\kappa$ that occurs next after $\kappa$. In other words, if $\kappa$ is the bottom element of a diamond (respectively, half-diamond), then $\varphi(\kappa)$ is the top element of that same diamond (respectively, half-diamond). We extend this notation to saturated chains in ${\bf H}_{\mathcal T}$ (including edges) by letting \[\varphi(\kappa_1\lessdot_{\mathcal T}\kappa_2\lessdot_{\mathcal T}\cdots\lessdot_{\mathcal T}\kappa_m)=\varphi(\kappa_1)\lessdot_{\mathcal T}\varphi(\kappa_2)\lessdot_{\mathcal T}\cdots\lessdot_{\mathcal T}\varphi(\kappa_m).\] We define the \dfn{period} of ${\bf H}_{\mathcal T}$ to be the smallest positive integer $p$ such that $e$ and $\varphi^p(e)$ have the same energy for every edge $e$ of ${\bf H}_{\mathcal T}$. A \dfn{transversal} of ${\bf H}_{\mathcal T}$ is a saturated chain $\mathscr T=(\kappa_0\lessdot_{\mathcal T}\kappa_1\lessdot_{\mathcal T}\cdots\lessdot_{\mathcal T}\kappa_d)$ such that $\kappa_0$ is a left-wall collision, $\kappa_d$ is a right-wall collision, and $\kappa_i$ involves the stones $\cc_i$ and $\cc_{i+1}$ for every $i\in[d-1]$. In other words, a transversal is a saturated chain that moves from left to right across ${\bf H}_{\mathcal T}$. We define the \dfn{energy composition} of $\mathscr T$ to be the tuple $\mathcal{E}(\mathscr T)=(\varepsilon_1,\ldots,\varepsilon_d)$, where $\varepsilon_i$ is the energy of the edge $\kappa_{i-1}\lessdot_{\mathcal T}\kappa_i$; note that $\mathcal{E}(\mathscr T)\in\Comp_d(n)$.

\begin{lemma}\label{lem:ERot}
Let $\mathcal T$ be a timeline, and let $\mathscr T$ be a transversal of ${\bf H}_{\mathcal T}$. Then \[\mathcal{E}(\varphi(\mathscr T))=\Rot_{n,d}(\mathcal{E}(\mathscr T)).\] Moreover, the period of ${\bf H}_{\mathcal T}$ is equal to the size of the orbit of $\Rot_{n,d}$ containing $\mathcal{E}(\mathscr T)$. 
\end{lemma}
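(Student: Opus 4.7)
The strategy is to decompose the Hasse diagram ${\bf H}_{\mathcal T}$ locally between $\mathscr T$ and $\varphi(\mathscr T)$ into one diamond for each $i\in\{1,\dots,d-1\}$ and two half-diamonds at the endpoints, and then apply \Cref{lem:diamond,lem:half-diamond}. First I would verify that for each $i\in\{1,\dots,d-1\}$, the four collisions $\kappa_i$, $\varphi(\kappa_{i-1})$, $\kappa_{i+1}$, $\varphi(\kappa_i)$ form a diamond with $\kappa_i$ at its bottom, and that $(\kappa_0,\kappa_1,\varphi(\kappa_0))$ and $(\kappa_d,\varphi(\kappa_{d-1}),\varphi(\kappa_d))$ are half-diamonds. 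The cover relations $\kappa_{i-1}\lessdot\kappa_i$ and $\kappa_i\lessdot\kappa_{i+1}$ are built into $\mathscr T$, and the remaining covers follow from the definition of $\varphi$ together with the absence of intervening collisions, which is ultimately forced by the cover relations already present in $\mathscr T$.

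Applying \Cref{lem:diamond} to the diamond with $\kappa_i$ at the bottom yields two energy equalities. The opposite ``up-right'' pair gives $\mathcal{E}(\kappa_i\lessdot\kappa_{i+1})=\mathcal{E}(\varphi(\kappa_{i-1})\lessdot\varphi(\kappa_i))$, which directly shows that the $i$-th entry of $\mathcal{E}(\varphi(\mathscr T))$ equals $\varepsilon_{i+1}$ for every $i\in\{1,\dots,d-1\}$. The opposite ``up-left'' pair gives $\mathcal{E}(\kappa_i\lessdot\varphi(\kappa_{i-1}))=\mathcal{E}(\kappa_{i+1}\lessdot\varphi(\kappa_i))$, and chaining these slanted equalities with the two half-diamond equalities from \Cref{lem:half-diamond} produces
\[\varepsilon_1 = \mathcal{E}(\kappa_1\lessdot\varphi(\kappa_0)) = \mathcal{E}(\kappa_2\lessdot\varphi(\kappa_1)) = \cdots = \mathcal{E}(\kappa_d\lessdot\varphi(\kappa_{d-1})) = \mathcal{E}(\varphi(\kappa_{d-1})\lessdot\varphi(\kappa_d)),\]
so the $d$-th entry of $\mathcal{E}(\varphi(\mathscr T))$ equals $\varepsilon_1$, yielding $\mathcal{E}(\varphi(\mathscr T))=\Rot_{n,d}(\mathcal{E}(\mathscr T))$.

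For the period claim, let $p_0$ denote the size of the $\Rot_{n,d}$-orbit of $\mathcal{E}(\mathscr T)$. Every transversal is determined by its column-$0$ element (built greedily via upper covers), and $\varphi$ cyclically shifts the left-wall collisions through time, so every transversal equals $\varphi^k(\mathscr T)$ for some $k$; its energy composition therefore lies in the $\Rot_{n,d}$-orbit of $\mathcal{E}(\mathscr T)$. Hence $\varphi^{p_0}$ preserves the energy of every transversal edge. Moreover, the chain of slanted equalities shows that every slanted edge in the strip between $\varphi^k(\mathscr T)$ and $\varphi^{k+1}(\mathscr T)$ has energy equal to the first entry of $\mathcal{E}(\varphi^k(\mathscr T))$; since $\Rot_{n,d}^{p_0}$ fixes this entry, $\varphi^{p_0}$ also preserves slanted-edge energies. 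Because every edge of ${\bf H}_{\mathcal T}$ is either a transversal edge of some transversal or a slanted edge in some strip, $\varphi^{p_0}$ preserves every energy, so the period divides $p_0$. Conversely, if $p$ is the period, the first part of the lemma gives $\Rot_{n,d}^p(\mathcal{E}(\mathscr T))=\mathcal{E}(\varphi^p(\mathscr T))=\mathcal{E}(\mathscr T)$, so $p_0$ divides $p$.

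The main obstacle is the structural verification in the first step --- confirming that the claimed cover relations really are covers in ${\bf H}_{\mathcal T}$ --- which requires ruling out extraneous intermediate collisions using the concrete dynamics of stones and coins. Once the diamond/half-diamond structure is established, the remainder is a clean application of \Cref{lem:diamond,lem:half-diamond}.
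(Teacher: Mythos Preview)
Your approach is correct and is essentially the same as the paper's: both deduce the result from \Cref{lem:half-diamond,lem:diamond} by reading off the diamond and half-diamond structure between $\mathscr T$ and $\varphi(\mathscr T)$. The paper's proof is only two sentences (declaring both statements ``immediate'' from those lemmas and noting that all edge energies are determined by a single transversal), whereas you have spelled out the chain-link-fence decomposition and the chaining of the slanted-edge equalities explicitly; your caution about verifying the cover relations is reasonable, but the paper takes the fence shape of ${\bf H}_{\mathcal T}$ as already established.
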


\begin{proof}
The second statement follows from the first because, by \Cref{lem:half-diamond,lem:diamond}, the energies of all edges in ${\bf H}_{\mathcal T}$ are determined by the energy composition of a single transversal of ${\bf H}_{\mathcal T}$. The first statement is also immediate from \Cref{lem:half-diamond,lem:diamond}. 
\end{proof}

\begin{example}
Suppose $n=6$ and $d=3$. Let ${\bf H}_{\mathcal T}$ be the Hasse diagram from \Cref{Fig7}, and let $\mathscr T=(\kappa_0\lessdot_{\mathcal T}\kappa_1\lessdot_{\mathcal T}\kappa_2\lessdot_{\mathcal T}\kappa_3)$ be the transversal consisting of the collisions that occur at times $2,5,6,10$. Then $\mathcal{E}(\mathscr T)=(2,1,3)\in\Comp_3(6)$. The period of ${\bf H}_{\mathcal T}$ is $3$, which is the size of the $\Rot_{6,3}$-orbit containing $(2,1,3)$. The transversal $\varphi(\mathscr T)$ consists of both the collisions that occur at time $8$ along with the collisions at times $13$ and $16$. We have $\mathcal{E}(\varphi(\mathscr T))=(1,3,2)=\Rot_{6,3}(\mathcal{E}(\mathscr T))$. Similarly, $\mathcal{E}(\varphi^2(\mathscr T))=(3,2,1)=\Rot_{6,3}^2(\mathcal{E}(\mathscr T))$. \hfill $\lozenge$ 
\end{example}

Let $S_r$ be the symmetric group consisting of all permutations of $[r]$. Suppose $v_{i_1},\ldots,v_{i_r}$ is a sequence of distinct vertices of $\Path_n$. We define the \dfn{standardization} of this sequence to be the unique permutation in $S_r$ that has the same relative order as $i_1,\ldots,i_r$ when written in one-line notation. For example, the standardization of $v_3,v_5,v_1,v_6$ is $2314$. Let $\mathcal T=(\sigma_t,t)_{t\in\ZZ}$ be a timeline. Recall that the stones $\s_1,\ldots,\s_d$ sit on the vertices $t+d,\ldots,t+1$, respectively, in the stones diagram of $(\sigma_t,t)$. Let $\stand_t(\mathcal T)$ be the standardization of the sequence $\sigma_{t}^{-1}(t+d),\ldots,\sigma_{t}^{-1}(t+1)$. Alternatively, $\stand_t(\mathcal T)$ is the permutation $\rho\colon[d]\to[d]$ such that the stone $\s_i$ and the coin $\cc_{\rho(i)}$ have the same color for every $i\in[d]$. Let us also define $\overline\stand_t(\mathcal T)$ to be the standardization of $\sigma_{t}^{-1}(1),\ldots,\sigma_{t}^{-1}(t),\sigma_{t}^{-1}(t+d+1),\ldots,\sigma_{t}^{-1}(n)$ (i.e., the standardization of the sequence obtained from $\sigma_{t}^{-1}(1),\sigma_{t}^{-1}(2),\ldots,\sigma_{t}^{-1}(n)$ by deleting $\sigma_{t}^{-1}(i)$ for all $t+1\leq i\leq t+d$). It follows from the analysis of how stones diagrams evolve through a timeline that $\stand_t(\mathcal T)=\stand_{t+1}(\mathcal T)$ and $\overline\stand_t(\mathcal T)=\overline\stand_{t+1}(\mathcal T)$. In other words, $\stand_t(\mathcal T)$ and $\overline\stand(\mathcal T)$ only depend on the timeline $\mathcal T$ and not on the time $t$. Thus, it makes sense to drop the subscripts and just write $\stand(\mathcal T)$ and $\overline\stand(\mathcal T)$. Note that there are $d!$ possibilities for $\stand(\mathcal T)$ and $(n-d)!$ possibilities for $\overline\stand(\mathcal T)$; this will end up being responsible for the appearance of $d!(n-d)!$ in \Cref{prop:PhiRot}. 

For $k,t\in\ZZ$, let $\sigma_t^{(k)}=\cyc^{-k}(\sigma_{t+k})$. It follows immediately from the definition of a timeline that the sequence $\mathcal T^{(k)}=(\sigma_t^{(k)},t)_{t\in\ZZ}$ is also a timeline; that is, $\nu_t(\sigma_{t-1}^{(k)})=\sigma_t^{(k)}$ for all $t\in\ZZ$. Furthermore, the stones diagram of $(\sigma_t^{(k)},t)$ is obtained from that of $(\sigma_{t+k},t+k)$ by moving all stones and replicas $k$ positions counterclockwise. It follows that the coins diagrams of $(\sigma_t^{(k)},t)$ and $(\sigma_{t+k},t+k)$ are identical. Therefore, if $\kappa$ is a collision in $\Coll_{\mathcal T^{(k)}}$ that occurs at time $t$, then there is a collision $\psi_k(\kappa)\in\Coll_{\mathcal T}$ that occurs at time $t+k$. The resulting map $\psi_k\colon\Coll_{\mathcal T^{(k)}}\to\Coll_{\mathcal T}$ is an isomorphism from $(\Coll_{\mathcal T^{(k)}},\leq_{\mathcal T^{(k)}})$ to $(\Coll_{\mathcal T},\leq_{\mathcal T})$; furthermore, under this isomorphism, corresponding edges of the Hasse diagrams ${\bf H}_{\mathcal T^{(k)}}$ and ${\bf H}_{\mathcal T}$ have the same energy. 

Recall that we write $\mathcal T_\sigma$ for the unique timeline containing the state $(\sigma,0)$. It follows from \Cref{lem:ERot} that the energy compositions of the transversals of ${\bf H}_{\mathcal T_\sigma}$ form a single orbit $\widetilde\Omega(\sigma)$ of $\Rot_{n,d}$. If $\mathcal T_\sigma=(\sigma_t,t)_{t\in\ZZ}$ (so $\sigma_0=\sigma$), then $\Phi_{n,d}(\sigma_0)=\sigma_0^{(n-d)}$, so $\mathcal T_{\Phi_{n,d}(\sigma_0)}=\mathcal T_\sigma^{(n-d)}$. Using the isomorphism $\psi_{n-d}$, we find that $\widetilde\Omega(\sigma_0)=\widetilde\Omega(\Phi_{n,d}(\sigma_0))$. Thus, we obtain a map \[\Omega=\Omega_{n,d}\colon\Orb_{\Phi_{n,d}}\to\Orb_{\Rot_{n,d}}\] that sends the $\Phi_{n,d}$-orbit containing a labeling $\mu$ to $\widetilde\Omega(\mu)$. We will prove that this map satisfies the conditions in \Cref{prop:PhiRot}. 

\begin{lemma}\label{lem:depends_orbit}
For any labeling $\sigma\in\Lambda_{\Path_n}$, we have \[\stand(\mathcal T_\sigma)=\stand(\mathcal T_{\Phi_{n,d}(\sigma)})\quad\text{and}\quad\overline\stand(\mathcal T_\sigma)=\overline\stand(\mathcal T_{\Phi_{n,d}(\sigma)}).\] Hence, $\stand(\mathcal T_\sigma)$ and $\overline\stand(\mathcal T_\sigma)$ only depend on the orbit of $\Phi_{n,d}$ containing $\sigma$. 
\end{lemma}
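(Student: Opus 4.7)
The plan is to reduce the lemma to the already-established fact that $\stand_t$ and $\overline\stand_t$ do not depend on $t$. The key observation is that the timeline $\mathcal T_{\Phi_{n,d}(\sigma)}$ is nothing other than the shifted timeline $\mathcal T_\sigma^{(n-d)}$.

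First, I would verify that $\Phi_{n,d}(\sigma) = \sigma_0^{(n-d)}$, where $(\sigma_t,t)_{t\in\ZZ} = \mathcal T_\sigma$. Indeed, since $\Phi_{n,d} = \cyc^d \nu_{n-d}\cdots\nu_1$, applying $\Phi_{n,d}$ to $\sigma_0 = \sigma$ yields $\cyc^d(\sigma_{n-d})$, which equals $\cyc^{-(n-d)}(\sigma_{n-d}) = \sigma_0^{(n-d)}$ because $\cyc^n$ is the identity. Since the unique timeline containing $(\sigma_0^{(n-d)},0)$ is by construction $\mathcal T_\sigma^{(n-d)}$, we conclude that $\mathcal T_{\Phi_{n,d}(\sigma)} = \mathcal T_\sigma^{(n-d)}$.

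Next, I would translate $\stand_0$ and $\overline\stand_0$ of $\mathcal T_\sigma^{(n-d)}$ into the corresponding quantities in $\mathcal T_\sigma$ using the identity $(\sigma_0^{(n-d)})^{-1}(j) = \sigma_{n-d}^{-1}(j+(n-d))$, with the index read modulo $n$. A direct check shows that the sequence $(\sigma_0^{(n-d)})^{-1}(d),\ldots,(\sigma_0^{(n-d)})^{-1}(1)$ equals $\sigma_{n-d}^{-1}(n),\ldots,\sigma_{n-d}^{-1}(n-d+1)$, which is precisely the sequence whose standardization defines $\stand_{n-d}(\mathcal T_\sigma)$. An analogous shift, using that $\{d+1,\ldots,n\}+(n-d)\equiv\{1,\ldots,n-d\}\pmod n$, shows that $\overline\stand_0(\mathcal T_\sigma^{(n-d)}) = \overline\stand_{n-d}(\mathcal T_\sigma)$.

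Combining these two steps yields $\stand(\mathcal T_{\Phi_{n,d}(\sigma)}) = \stand_0(\mathcal T_\sigma^{(n-d)}) = \stand_{n-d}(\mathcal T_\sigma) = \stand(\mathcal T_\sigma)$, where the last equality uses the already-proven time-independence of $\stand_t$; the same reasoning handles $\overline\stand$. The assertion that $\stand(\mathcal T_\sigma)$ and $\overline\stand(\mathcal T_\sigma)$ depend only on the $\Phi_{n,d}$-orbit of $\sigma$ is then immediate by iterating the invariance just proven. I do not anticipate a serious obstacle; the only real content is the bookkeeping of index arithmetic modulo $n$ coming from the definition of $\sigma_t^{(k)}$, together with the identification $\cyc^{-(n-d)} = \cyc^d$.
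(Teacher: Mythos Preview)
Your proposal is correct and follows essentially the same route as the paper: both arguments identify $\Phi_{n,d}(\sigma)=\cyc^d(\sigma_{n-d})=\sigma_0^{(n-d)}$, then compare the defining sequences for $\stand$ and $\overline\stand$ at time $0$ in $\mathcal T_{\Phi_{n,d}(\sigma)}$ with those at time $n-d$ in $\mathcal T_\sigma$, and finally invoke the already-established time-independence of $\stand_t$ and $\overline\stand_t$. The only cosmetic difference is that the paper phrases the $\stand$ part via the rotation of stones diagrams rather than the index shift $(\sigma_0^{(n-d)})^{-1}(j)=\sigma_{n-d}^{-1}(j+n-d)$, but this is the same computation.
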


\begin{proof}
Let $\mathcal T_\sigma=(\sigma_t,t)_{t\in\ZZ}$ (so $\sigma_0=\sigma$). Let $\mu=\Phi_{n,d}(\sigma)$. The stones diagram of the state $(\sigma_0^{(n-d)},0)=(\mu,0)$ is obtained from that of $(\sigma_{n-d},n-d)$ by moving all stones and replicas $n-d$ positions counterclockwise, so $\stand(\mathcal T_\sigma)=\stand(\mathcal T_{\Phi_{n,d}(\sigma)})$. Since $(\sigma_{n-d},n-d)$ is in the timeline $\mathcal T_\sigma$ and $(\mu,0)$ is in the timeline $\mathcal T_\mu$, the permutations $\overline\stand(\mathcal T_\sigma)$ and $\overline\stand(\mathcal T_\mu)$ are the standardizations of the sequences $\sigma_{n-d}^{-1}(1),\sigma_{n-d}^{-1}(2),\ldots,\sigma_{n-d}^{-1}(n-d)$ and $\mu^{-1}(d+1),\mu^{-1}(d+2),\ldots,\mu^{-1}(n)$, respectively. But $\mu=\cyc^d(\sigma_{n-d})$, so these sequences are equal. 
\end{proof}

For $\rho\in S_r$, let $\rev(\rho)$ be the permutation whose one-line notation is obtained by reversing that of $\rho$. Let $\delta\colon\ZZ/n\ZZ\to\ZZ/n\ZZ$ be the automorphism of $\Cycle_n$ defined by $\delta(i)=d+1-i$. Given an orbit $\widehat{\mathcal O}\in\Orb_{\Rot_{n,d}}$, let $\rev(\widehat{\mathcal O})\in\Orb_{\Rot_{n,d}}$ be the orbit obtained by reversing all the compositions in $\widehat{\mathcal O}$. 

\begin{lemma}\label{lem:reverse}
For every $\sigma\in\Lambda_{\Path_n}$, we have \[\stand(\mathcal T_{\delta\circ\sigma})=\rev(\stand(\mathcal T_\sigma))\quad\text{and}\quad\overline\stand(\mathcal T_{\delta\circ\sigma})=\rev(\overline\stand(\mathcal T_\sigma)).\] Furthermore, $\widetilde\Omega(\delta\circ\sigma)=\rev(\widetilde\Omega(\sigma))$. 
\end{lemma}

\begin{proof}
The first statement is immediate from the definitions. To see why the second statement is true, note that we can obtain the coins diagrams of the states in $\mathcal T_{\delta\circ\sigma}$ by ``going backward in time'' through the coins diagrams of the states in $\mathcal T_\sigma$ and permuting the colors of the coins. To be more precise, let us write $\mathcal T_\sigma=(\sigma_t,t)_{t\in\ZZ}$ and $\mathcal T_{\delta\circ\sigma}=(\sigma_t',t)_{t\in\ZZ}$ (so $\sigma_0=\sigma$ and $\sigma_0'=\delta\circ\sigma$). Then for every $t\in\ZZ$, the coins diagram of $(\sigma_t',t)$ is obtained from that of $(\sigma_{-t},-t)$ by permuting the colors of the coins. Let $\mathscr T=(\kappa_0\lessdot_{\mathcal T_\sigma}\cdots\lessdot_{\mathcal T_\sigma}\kappa_d)$ be a transversal of ${\bf H}_{\mathcal T_\sigma}$ with energy composition $\mathcal E(\mathscr T)=(\varepsilon_1,\ldots,\varepsilon_d)$. Then $\widetilde\Omega(\sigma)$ is the orbit of $\Rot_{n,d}$ containing $(\varepsilon_1,\ldots,\varepsilon_d)$. If $\kappa_j$ occurs at time $t_j$ and involves $\cc_i$, then there is a collision $\kappa_j'$ in the timeline $\mathcal T_{\delta\circ\sigma}$ that occurs at time $-t_j$ and involves $\cc_i$ (though $\cc_i$ may have a different color in the coins diagrams of this timeline). In particular, $\kappa_d'$ is a right-wall collision, $\kappa_0'$ is a left-wall collision, and $\kappa_d'\lessdot_{\mathcal T_{\delta\circ\sigma}}\cdots\lessdot_{\mathcal T_{\delta\circ\sigma}}\kappa_0'$ is a saturated chain in ${\bf H}_{\mathcal T_{\delta\circ\sigma}}$. We have $\mathcal E(\kappa_d'\lessdot_{\mathcal T_{\delta\circ\sigma}}\cdots\lessdot_{\mathcal T_{\delta\circ\sigma}}\kappa_0')=(\varepsilon_d,\ldots,\varepsilon_1)$. Starting with this saturated chain, one can straightforwardly apply \Cref{lem:half-diamond,lem:diamond} to find that there is a transversal $\mathscr T'$ in ${\bf H}_{\mathcal T_{\delta\circ\sigma}}$ with energy composition $\mathcal E(\mathscr T')=(\varepsilon_d,\ldots,\varepsilon_1)$. Thus, $\widetilde\Omega(\delta\circ\sigma)$ is the orbit of $\Rot_{n,d}$ containing $(\varepsilon_d,\ldots,\varepsilon_1)$, which is $\rev(\widetilde\Omega(\sigma))$.
\end{proof}

\begin{lemma}\label{lem:scaling_factor}
For every $\mathcal O\in\Orb_{\Phi_{n,d}}$, we have $|\Omega(\mathcal O)|=\frac{d}{n}|\mathcal O|$. 
\end{lemma}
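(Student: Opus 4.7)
The plan is to show $|\mathcal O|=pn/d$, where $p=|\Omega(\mathcal O)|$ is the period of ${\bf H}_{\mathcal T_\sigma}$ for any $\sigma\in\mathcal O$ (equal to the size of $\Omega(\mathcal O)$ by \Cref{lem:ERot}). Fix $\sigma\in\mathcal O$, write $\mathcal T=\mathcal T_\sigma=(\sigma_t,t)_{t\in\ZZ}$, and let $\mathscr T$ be a transversal of ${\bf H}_{\mathcal T}$ starting at a left-wall collision $\kappa_0$ at time $t_0$, with energy composition $(\varepsilon_1,\ldots,\varepsilon_d)$. Since $\Rot_{n,d}^p$ fixes this composition, $p\mid d$ and $\varepsilon_{i+p}=\varepsilon_i$ for all $i$; summing yields $\varepsilon_1+\cdots+\varepsilon_p=np/d\in\ZZ_{>0}$, and integrality forces $d/\gcd(n,d)\mid p$. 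This divisibility has two convenient consequences: $pn/d$ is a multiple of $n/\gcd(n,d)$ (consistent with \Cref{lem:Phi_Divisible}), and $T^*:=pn(n-d)/d$ is a multiple of $n$. Iterating $\Phi_{n,d}(\sigma_0)=\cyc^d(\sigma_{n-d})$ gives the key identity $\Phi_{n,d}^m(\sigma_0)=\cyc^{-m(n-d)}(\sigma_{m(n-d)})$ for every $m\ge 0$; when $n/\gcd(n,d)$ divides $m$, the cyclic shift is trivial, so $\Phi_{n,d}^m(\sigma_0)=\sigma_0$ is equivalent to $\sigma_{m(n-d)}=\sigma_0$.

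For the inequality $|\mathcal O|\le pn/d$, apply \Cref{lem:new} to each of the $p$ consecutive left-wall half-diamonds linking $\kappa_0,\varphi(\kappa_0),\ldots,\varphi^p(\kappa_0)$; by \Cref{lem:ERot} these half-diamonds have edge energies $\varepsilon_1,\ldots,\varepsilon_p$, so the elapsed time from $\kappa_0$ to $\varphi^p(\kappa_0)$ is $(\varepsilon_1+\cdots+\varepsilon_p)(n-d)=T^*$. The next step is to argue that the $p$-periodicity of the Hasse diagram, anchored by the fact that $\cc_1$ sits at $v_1$ at both $\kappa_0$ and $\varphi^p(\kappa_0)$, forces every coin to occupy the same vertex at time $t_0+T^*$ as at time $t_0$. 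Combining matched coin positions with $T^*\equiv 0\pmod n$ (so the stones on $\Cycle_n$ return to their original positions) and the constancy of $\stand(\mathcal T)$ and $\overline\stand(\mathcal T)$, one concludes $\sigma_{t_0+T^*}=\sigma_{t_0}$. Since $\nu_\ell$ depends only on $\ell$ modulo $n$ and $T^*\equiv 0\pmod n$, this equality propagates by determinism to $\sigma_{T^*}=\sigma_0$, and the key identity then yields $\Phi_{n,d}^{pn/d}(\sigma)=\sigma$, so $|\mathcal O|$ divides $pn/d$.

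For the inequality $|\mathcal O|\ge pn/d$, let $m=|\mathcal O|$. Then $\sigma_{m(n-d)}=\sigma_0$, and $n/\gcd(n,d)\mid m$ from \Cref{lem:Phi_Divisible} ensures $m(n-d)\equiv 0\pmod n$, so the stones diagrams at times $t_0$ and $t_0+m(n-d)$ coincide. Determinism of the timeline then produces an energy-preserving isomorphism of ${\bf H}_{\mathcal T}$ carrying $\kappa_0$ to a left-wall collision at time $t_0+m(n-d)$, necessarily equal to $\varphi^{p'}(\kappa_0)$ for some $p'\ge 1$. Because this isomorphism preserves edge energies, $\Rot_{n,d}^{p'}$ fixes $(\varepsilon_1,\ldots,\varepsilon_d)$, forcing $p\mid p'$ by minimality of $p$. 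The same half-diamond time computation, now valid with $p'$ in place of $p$, gives $m(n-d)=p'n(n-d)/d$, whence $m=p'n/d\ge pn/d$. Combining the two inequalities completes the proof of $|\mathcal O|=pn/d$, which is equivalent to $|\Omega(\mathcal O)|=\frac{d}{n}|\mathcal O|$.

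The chief technical obstacle is the step asserting that the $p$-periodicity of edge energies upgrades to the recurrence of the full coin configuration at time $t_0+T^*$. My plan for this step is to walk through the chain-link fence of ${\bf H}_{\mathcal T}$ outward from the anchor $\kappa_0\leftrightarrow\varphi^p(\kappa_0)$, inductively matching coin positions: \Cref{lem:new} pins down $\cc_1$'s position along the left wall (and symmetrically $\cc_d$'s along the right wall), while the opposite-edge energy equalities of \Cref{lem:diamond}, together with the explicit time analysis appearing in its proof (which shows that a coin's dwell time on each vertex of its trajectory between consecutive collisions is controlled by the surrounding energies), let one inductively locate $\cc_2,\ldots,\cc_{d-1}$ at time $t_0+T^*$ from their locations at $t_0$. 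Once this coin-position matching is in hand, the remaining deductions above are routine.
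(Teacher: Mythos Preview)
Your overall architecture matches the paper's: both directions of $|\mathcal O|=pn/d$ proceed by showing that the labelings in the timeline are periodic with period $T^*=pn(n-d)/d$ and that this is the least such period, using \Cref{lem:new} to compute the time between $\kappa_0$ and $\varphi^p(\kappa_0)$, and the lower bound via the determinism of $\nu_t$ modulo $n$ is essentially identical to the paper's. The difference lies entirely in the step you flag as the chief obstacle, namely upgrading periodicity of edge energies to $\sigma_{t_0+T^*}=\sigma_{t_0}$.

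The paper resolves this with a single clean observation that you do not make: the time increments $t_1-t_0,\ldots,t_d-t_{d-1}$ along a transversal (and in fact the positions of all replicas throughout) are completely determined by the energy composition of the transversal, the permutations $\stand(\mathcal T)$ and $\overline\stand(\mathcal T)$, and the residue of $t_0$ modulo $n$. Since $\varphi^p(\mathscr T)$ has the same energy composition as $\mathscr T$, since $t_0^*\equiv t_0\pmod n$, and since $\stand$ and $\overline\stand$ are constant along the timeline, one gets $t_j^*=t_j+T^*$ for every $j$ and hence $\sigma_t=\sigma_{t+T^*}$ for all $t$ in one stroke. Your inductive coin-matching plan is aiming at the same conclusion by walking outward through diamonds, but to recover coin positions at the specific time $t_0+T^*$ (rather than merely at collision times), you would need exactly the dwell-time control that the paper packages into this determinacy statement; the proof of \Cref{lem:diamond} you cite already uses case distinctions depending on $\stand(\mathcal T)$, so your induction would effectively rebuild the paper's argument piecemeal. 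I would replace your last paragraph with the paper's determinacy observation: it is the missing idea, and once stated, the rest of your write-up goes through.
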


\begin{proof}
Fix $\mathcal O\in\Orb_{\Phi_{n,d}}$, and let $\mathcal T=(\sigma_t,t)_{t\in\ZZ}$ be a timeline such that $\sigma_0\in\mathcal O$. Consider a transversal $\mathscr T=(\kappa_0\lessdot_{\mathcal T}\kappa_1\lessdot_{\mathcal T}\cdots\lessdot_{\mathcal T}\kappa_d)$ of ${\bf H}_{\mathcal T}$, and let $\mathcal{E}(\mathscr T)=(\varepsilon_1,\ldots,\varepsilon_d)$. Then $\mathcal{E}(\mathscr T)$ is in the orbit $\Omega(\mathcal O)$. Let us define $\varepsilon_k$ for all $k\in\ZZ$ by declaring $\varepsilon_{i+d}=\varepsilon_i$. Let $t_j$ be the time when the collision $\kappa_j$ occurs. 

Consider the stones diagrams. Between times $t_0$ and $t_1$, the stone with the same color as $\cc_1$ slides along the cycle carrying $\vv_1$ until sliding from underneath $\vv_1$ to underneath $\vv_2$, which it carries until sliding underneath $\vv_3$, and so on until it finally slides underneath $\vv_{\varepsilon_1}$. The positions of $\vv_1,\ldots,\vv_{\varepsilon_1}$ throughout this interval of time are completely determined by the value of $\varepsilon_1$, the permutations $\stand(\mathcal T)$ and $\overline\stand(\mathcal T)$, and the residue of $t_0$ modulo $n$. It follows that $t_1-t_0$ is determined by $\varepsilon_1$, $\stand(\mathcal T)$, $\overline\stand(\mathcal T)$, and the residue of $t_0$ modulo $n$. Between times $t_1$ and $t_2$, the stone with the same color as $\cc_2$ slides along the cycle carrying $\vv_{\varepsilon_1+1}$ until sliding from underneath $\vv_{\varepsilon_1+1}$ to underneath $\vv_{\varepsilon_1+2}$, which it carries until sliding underneath $\vv_{\varepsilon_1+3}$, and so on until it finally slides underneath $\vv_{\varepsilon_1+\varepsilon_2}$. The positions of $\vv_{\varepsilon_1+1},\ldots,\vv_{\varepsilon_1+\varepsilon_2}$ throughout this interval of time are determined by the pair $(\varepsilon_1,\varepsilon_2)$, the permutations $\stand(\mathcal T)$ and $\overline\stand(\mathcal T)$, and the residue of $t_1$ modulo $n$. Thus, $t_2-t_1$ is determined by $(\varepsilon_1,\varepsilon_2)$, $\stand(\mathcal T)$, $\overline\stand(\mathcal T)$, and the residue of $t_0$ modulo $n$. In general, the values of $t_1-t_0,\ldots,t_d-t_{d-1}$ are determined by the energy composition $(\varepsilon_1,\ldots,\varepsilon_d)$, the permutations $\stand(\mathcal T)$ and $\overline\stand(\mathcal T)$, and the residue of $t_0$ modulo $n$. 

Let $p$ be the period of ${\bf H}_{\mathcal T}$. By \Cref{lem:ERot}, $p$ is equal to $|\Omega(\mathcal O)|$, the size of the orbit of $\Rot_{n,d}$ containing the composition $\mathcal{E}(\mathscr T)=(\varepsilon_1,\ldots,\varepsilon_d)$. Hence, $\varepsilon_1+\cdots+\varepsilon_p=\frac{p}{d}(\varepsilon_1+\cdots+\varepsilon_d)=pn/d$. Let $t_j^*$ be the time when the collision $\varphi^p(\kappa_j)$ occurs. Using \Cref{lem:half-diamond,lem:diamond}, we find that the edges in the half-diamond of ${\bf H}_{\mathcal T}$ between $\varphi^{i-1}(\kappa_0)$ and $\varphi^i(\kappa_0)$ both have energy $\varepsilon_i$. Therefore, by \Cref{lem:new}, the time between the collisions $\varphi^{i-1}(\kappa_0)$ and $\varphi^i(\kappa_0)$ is $(n-d)\varepsilon_i$. This shows that $\varphi^p(\kappa_0)$ occurs at time $t_0^*=t_0+(n-d)(\varepsilon_1+\cdots+\varepsilon_p)=t_0+pn(n-d)/d$. Because $p$ is the size of an orbit of $\Rot_{n,d}$, it is divisible by $d/\gcd(n,d)$; this implies that $t_0^*\equiv t_0\pmod{n}$. By the definition of $p$, the transversal $\varphi^p(\mathscr T)$ has the same energy composition $(\varepsilon_1,\ldots,\varepsilon_d)$ as $\mathscr T$. Since the permutations $\stand(\mathcal T)$ and $\overline\stand(\mathcal T)$ only depend on $\mathcal T$, it follows from the preceding paragraph that $t_j^*-t_{j-1}^*=t_j-t_{j-1}$ for all $1\leq j\leq d$; consequently, $t_j^*=t_j+pn(n-d)/d$ for all $0\leq j\leq d$. From this, we deduce that $\sigma_t=\sigma_{t+pn(n-d)/d}$ for all $t\in\ZZ$. In fact, $pn/d$ is the smallest positive integer $\ell$ such that $\sigma_t=\sigma_{t+\ell (n-d)}$ for all $t\in\ZZ$ (otherwise, we could reverse this argument to find that the period of ${\bf H}_{\mathcal T}$ is smaller than $p$).  

According to \eqref{eq:Phi_Order}, we have $\Phi_{n,d}^{pn/d}=\nu_{pn(n-d)/d}\cdots\nu_2\nu_1$, so $\Phi_{n,d}^{pn/d}(\sigma_0)=\sigma_{pn(n-d)/d}=\sigma_0$. Hence, $|\mathcal O|$ divides $pn/d$. On the other hand, since \Cref{lem:Phi_Divisible} tells us that $|\mathcal O|$ is divisible by $n/\gcd(n,d)$, we can use \eqref{eq:Phi_Order} to find that \[\sigma_{0}=\Phi_{n,d}^{|\mathcal O|}(\sigma_0)=(\nu_{|\mathcal O|(n-d)}\cdots\nu_2\nu_1)(\sigma_0)=\sigma_{|\mathcal O|(n-d)}.\] Since $|\mathcal O|(n-d)$ is divisible by $n$ (by \Cref{lem:Phi_Divisible}), we have $\nu_{t+|\mathcal O|(n-d)}=\nu_t$ for all integers $t$. Consequently, $\sigma_t=\sigma_{t+|\mathcal O|(n-d)}$ for all integers $t$. Appealing to the last sentence in the previous paragraph, we deduce that $|\mathcal O|\geq pn/d=\frac{n}{d}|\Omega(\mathcal O)|$. As $|\mathcal O|$ divides $pn/d$, the proof is complete.  
\end{proof}

Recall that \Cref{lem:depends_orbit} tells us that $\stand(\mathcal T_\sigma)$ and $\overline \stand(\mathcal T_\sigma)$ only depend on the orbit of $\Phi_{n,d}$ containing $\sigma$. In order to complete the proof of \Cref{prop:PhiRot}, we just need to show that $|\Omega^{-1}(\widehat{\mathcal O})|= d!(n-d)!$ for every $\widehat{\mathcal O}\in\Orb_{\Rot_{n,d}}$. We will do this by showing that for each pair of permutations $(\rho,\overline\rho)\in S_d\times S_{n-d}$, there exists a unique orbit $\mathcal O\in\Omega^{-1}(\widehat{\mathcal O})$ such that $\stand(\mathcal T_\sigma)=\rho$ and $\overline\stand(\mathcal T_\sigma)=\overline \rho$ for every $\sigma\in\mathcal O$. We start by proving existence; uniqueness will then follow from a simple counting argument. We implore the reader to consult \Cref{exam:5} while reading the proof of the next lemma. 

\begin{lemma}\label{lem:inequality}
Suppose $\widehat{\mathcal O}\in\Orb_{\Rot_{n,d}}$ and $(\rho,\overline\rho)\in S_d\times S_{n-d}$. There exists an orbit $\mathcal O\in\Omega^{-1}(\widehat{\mathcal O})$ such that $\stand(\mathcal T_\sigma)=\rho$ and $\overline\stand(\mathcal T_\sigma)=\overline \rho$ for every $\sigma\in\mathcal O$. 
\end{lemma}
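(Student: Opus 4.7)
The plan is to construct an explicit labeling $\sigma\in\Lambda_{\Path_n}$ whose $\Phi_{n,d}$-orbit $\mathcal O$ witnesses all three of the desired conditions: $\Omega(\mathcal O)=\widehat{\mathcal O}$, $\stand(\mathcal T_\sigma)=\rho$, and $\overline\stand(\mathcal T_\sigma)=\overline\rho$. Fix a representative $(\varepsilon_1,\ldots,\varepsilon_d)\in\widehat{\mathcal O}$ (where the choice of cyclic rotation within the orbit may be taken to depend on $\rho$), set $j_i = 1+\sum_{k<i}\varepsilon_k$ for $i=1,\ldots,d$, and let $u_1<\cdots<u_{n-d}$ be the vertices of $\Path_n$ other than $v_{j_1},\ldots,v_{j_d}$. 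Define $\sigma$ by
\[
\sigma(v_{j_i}) = d+1-\rho^{-1}(i) \quad \text{for } i=1,\ldots,d, \qquad \sigma(u_j) = d+\overline\rho^{-1}(j) \quad \text{for } j=1,\ldots,n-d.
\]
By unwinding the definitions of $\stand_0$ and $\overline\stand_0$, one checks directly that $\stand_0(\mathcal T_\sigma)=\rho$ and $\overline\stand_0(\mathcal T_\sigma)=\overline\rho$, so by \Cref{lem:depends_orbit} the orbit $\mathcal O$ containing $\sigma$ satisfies the two standardization conditions on every element.

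The crux is then to show that $\widetilde\Omega(\sigma)=\widehat{\mathcal O}$. To this end, I would track the stones diagram and coins diagram of the state $(\sigma,0)$ through successive small steps. By construction, the coin vertices $v_{j_1},\ldots,v_{j_d}$ partition $\Path_n$ into $d$ intervals of lengths $\varepsilon_1,\ldots,\varepsilon_d$, and this partition acts as a template for the subsequent collisions. A careful step-by-step analysis of the toggles $\theta_k$ identifies a transversal
\[
\kappa_0\lessdot_{\mathcal T_\sigma}\kappa_1\lessdot_{\mathcal T_\sigma}\cdots\lessdot_{\mathcal T_\sigma}\kappa_d
\]
of $\bf H_{\mathcal T_\sigma}$ whose edges have energies forming a cyclic rotation of $(\varepsilon_1,\ldots,\varepsilon_d)$. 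Invoking \Cref{lem:ERot} then yields $\widetilde\Omega(\sigma)=\widehat{\mathcal O}$, and hence $\Omega(\mathcal O)=\widehat{\mathcal O}$, completing the proof.

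The main obstacle is this dynamical verification of the energies of the transversal. The analysis is subtle because traffic jams may extend to either wall, ``flicker'' collisions within jams must be tracked, and the order in which coins begin to move depends on $\rho$: for some permutations $\rho$, a coin other than $\cc_1$ is the first to move, producing a transversal whose energy composition is a nontrivial rotation of $(\varepsilon_1,\ldots,\varepsilon_d)$ rather than this composition itself. This is why one may need to choose the representative of $\widehat{\mathcal O}$ in a $\rho$-dependent way (or, equivalently, to adjust the coin positions) so that the bookkeeping of small steps and collisions lines up. Nevertheless, the multiset of energies is governed by the gaps $\varepsilon_1,\ldots,\varepsilon_d$ between coin vertices, so the composition that emerges is always a member of the $\Rot_{n,d}$-orbit $\widehat{\mathcal O}$, which is precisely what is needed.
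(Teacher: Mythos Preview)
Your construction of $\sigma$ is natural and does yield $\stand(\mathcal T_\sigma)=\rho$ and $\overline\stand(\mathcal T_\sigma)=\overline\rho$ immediately. The problem is the step you flag as the ``main obstacle'': you assert that a transversal of ${\bf H}_{\mathcal T_\sigma}$ has energy composition lying in $\widehat{\mathcal O}$, but you do not prove it. The heuristic that ``the multiset of energies is governed by the gaps $\varepsilon_1,\ldots,\varepsilon_d$'' is not justified and, as stated, is not enough even if true: knowing the multiset of energies does not determine the composition up to cyclic rotation. More fundamentally, the energies of a transversal are determined by the full dynamics---where each coin turns around---and those turning points depend on the cyclic arrangement of \emph{all} replicas, hence on $\overline\rho$, not merely on the initial coin positions. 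Since for each fixed $(\rho,\overline\rho)$ there is exactly one $\Phi_{n,d}$-orbit in every fiber $\Omega^{-1}(\widehat{\mathcal O}')$, your $\sigma$ certainly lands in \emph{some} fiber, but nothing you have written pins down which one. Allowing yourself to choose the representative of $\widehat{\mathcal O}$ in a $\rho$-dependent way does not help, because the difficulty is $\overline\rho$-dependent as well.

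The paper avoids precisely this difficulty by arguing by induction on $d$. The key idea is to take, by induction, a timeline on $\Path_{n-\varepsilon_1}$ with $d-1$ stones realizing the orbit of $(\varepsilon_2,\ldots,\varepsilon_d)$ and the appropriate reduced permutations $\rho',\overline\rho'$, and then to \emph{insert} one new stone and $\varepsilon_1$ new replicas into the stones diagram at a carefully chosen moment. The crucial observation---which does require work, and which uses \cref{lem:reverse} to arrange that $1$ precedes $2$ in $\rho$---is that the inserted replicas $\vv_1,\ldots,\vv_{\varepsilon_1}$ are transparent to the other stones during the relevant window, so the collisions $\kappa_2,\ldots,\kappa_d$ inherited from the smaller system occur at the same places. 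This is what pins down the energy composition as $(\varepsilon_1,\ldots,\varepsilon_d)$. Your direct approach would need an argument of comparable strength controlling the interaction of all $d$ coins simultaneously for arbitrary $\overline\rho$, and you have not supplied one.
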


\begin{proof}
If $d=1$, then the result is obvious because $\widehat{\mathcal O}=\Comp_{1}(n)=\{(n)\}$. Therefore, we may assume $d\geq 2$ and proceed by induction on $d$. It follows from \Cref{lem:reverse} that the following two statements are equivalent: 
\begin{enumerate}
\item There exists an orbit $\mathcal O\in\Omega^{-1}(\widehat{\mathcal O})$ such that $\stand(\mathcal T_\sigma)=\rho$ and $\overline\stand(\mathcal T_\sigma)=\overline \rho$ for every $\sigma\in\mathcal O$.
\item There exists an orbit $\mathcal O\in\Omega^{-1}(\rev(\widehat{\mathcal O}))$ such that $\stand(\mathcal T_\sigma)=\rev(\rho)$ and $\overline\stand(\mathcal T_\sigma)=\rev(\overline \rho)$ for every $\sigma\in\mathcal O$.
\end{enumerate} 
Therefore, we may assume\footnote{This assumption might seem innocuous, but it is actually imperative for our argument. Thus, \cref{lem:reverse} really is quite crucial.} without loss of generality that the number $1$ appears to the left of the number $2$ in the one-line notation of $\rho$ (otherwise, replace $\rho$, $\overline{\rho}$, and $\widehat{\mathcal O}$ by $\rev(\rho)$, $\rev(\overline\rho)$, and $\rev(\widehat{\mathcal O})$, respectively).   

Since $d<n$, every composition in $\widehat{\mathcal O}$ has a part that is strictly greater than $1$. Thus, we may choose a composition $(\varepsilon_1,\ldots,\varepsilon_d)\in\widehat{\mathcal O}$ such that $\varepsilon_2\geq 2$. We will also assume for simplicity that $\varepsilon_1\geq 2$; the case when $\varepsilon_1=1$ is similar. Let $\rho'$ be the permutation in $S_{d-1}$ obtained from $\rho$ by deleting the entry $1$ and decreasing the remaining entries by $1$. Let $\overline \rho'$ be the permutation in $S_{d-\varepsilon_1+1}$ obtained from $\overline \rho$ by deleting the entries $1,\ldots,\varepsilon_1-1$ and decreasing the remaining entries by $\varepsilon_1-1$. Let $\widehat{\mathcal O}'$ be the orbit of $\Rot_{n-\varepsilon_1,d-1}$ containing $(\varepsilon_2,\ldots,\varepsilon_d)$. By induction, there exists an orbit $\mathcal O'\in\Omega_{n-\varepsilon_1,d-1}^{-1}(\widehat{\mathcal O}')$ such that $\stand(\mathcal T_{\sigma'})=\rho'$ and $\overline\stand(\mathcal T_{\sigma'})=\overline \rho'$ for every $\sigma'\in\mathcal O'$ (the timeline $\mathcal T_{\sigma'}$ is defined with the parameters $n-\varepsilon_1$ and $d-1$ replacing $n$ and $d$). 

Fix $\sigma_0'\in\mathcal O'$, and consider the timeline $\mathcal T_{\sigma_0'}=(\sigma_t',t)_{t\in\ZZ}$ (defined with the parameters $n-\varepsilon_1$ and $d-1$). Let us identify $\Path_{n-\varepsilon_1}$ with the subgraph of $\Path_n$ obtained by deleting the vertices $v_1,\ldots,v_{\varepsilon_1}$. Thus, the leftmost vertex in $\Path_{n-\varepsilon_1}$ is $v_{\varepsilon_1+1}$, and the replicas appearing in the stones diagrams of states in $\mathcal T_{\sigma_0'}$ are $\vv_{\varepsilon_1+1},\ldots,\vv_n$. Let 
$\kappa_1'\lessdot_{\mathcal T_{\sigma_0'}}\cdots\lessdot_{\mathcal T_{\sigma_0'}}\kappa_d'$ be a transversal of ${\bf H}_{\mathcal T_{\sigma_0'}}$ with energy composition $(\varepsilon_2,\ldots,\varepsilon_d)$. Let $k+1$ be the first time after the collision $\kappa_1'$ when the leftmost coin moves. Because $\varepsilon_2\geq 2$, the leftmost coin occupies $v_{\varepsilon_1+1}$ in the coins diagram of $(\sigma_k',k)$ and occupies $v_{\varepsilon_1+2}$ in the coins diagram of $(\sigma_{k+1}',k+1)$. Let $v_\eta$ be the vertex of $\Path_{n-\varepsilon_1}$ such that $\sigma_k'(v_\eta)=d+k\in\ZZ/(n-\varepsilon_1)\ZZ$. In the stones diagram of $(\sigma_k',k)$, the replica $\vv_\eta$ sits one space clockwise from the consecutive block of stones. 

We can construct the stones diagram of a state $(\mu,m)$ with $\mu\in\Lambda_{\Path_n}$ from the stones diagram of $(\sigma_k',k)$ by inserting vertices to replace $\Cycle_{n-\varepsilon_1}$ by $\Cycle_n$, adding one stone, and adding the replicas $\vv_1,\ldots,\vv_{\varepsilon_1}$. When we do this, we make sure to keep the stones on the consecutive block of vertices $m+d,\ldots,m+1$, and we make sure that the replicas that were sitting on stones remain on stones. We place the replica $\vv_{\varepsilon_1}$ on the newly inserted stone. We can also ensure that $\stand(\mu,m)=\rho$ and $\overline\stand(\mu,m)=\overline\rho$, and we can choose $m$ so that $\vv_{\eta}$ is on the vertex $m+d+1$. (In fact, these conditions uniquely determine $\mu$ and uniquely determine $m$ modulo $n$.) Let $\sigma_0$ be the unique labeling in $\Lambda_{\Path_n}$ such that the timeline $\mathcal T_{\sigma_0}$ contains the state $(\mu,m)$. 

Consider watching the coins diagrams of the states in $\mathcal T_{\sigma_0}$ evolve over time. At time $m$, the coins $\cc_1$ and $\cc_2$ occupy $v_{\varepsilon_1}$ and $v_{\varepsilon_1+1}$, respectively. At time $m+1$, the coin $\cc_2$ moves to $v_{\varepsilon_1+2}$, and $\cc_1$ stays on $v_{\varepsilon_1}$ (we are using the fact that $1$ appears to the left of $2$ in $\rho$). This implies (since $\varepsilon_1\geq 2$) that the last collision involving $\cc_1$ that occurred at or before time $m$ must have been a two-coins collision involving $\cc_1$ and $\cc_2$; let us call this collision $\kappa_1$ and say that it occurred at time $m'$. After time $m+1$, $\cc_1$ will move leftward until reaching $v_1$, where it will take part in a left-wall collision $\kappa_0^*$. Meanwhile, $\cc_2$ will travel rightward until reaching $v_{\varepsilon_1+\varepsilon_2}$, where it will collide with $\cc_3$ in a two-coins collision $\kappa_2$. Then $\cc_3$ will move rightward until reaching $v_{\varepsilon_1+\varepsilon_2+\varepsilon_3}$, where it will collide with $\cc_4$ in a two-coins collision $\kappa_3$, and so on. Eventually, $\cc_d$ moves rightward and takes part in a right-wall collision $\kappa_d$. The key observation here is that throughout this process, in the stones diagrams, any stone carrying a replica $\vv_\ell$ with $\ell\geq \varepsilon_1+2$ will just slide through any replica $\vv_{\ell'}$ with $\ell'\leq\varepsilon_1$. This means that the replicas $\vv_1,\ldots,\vv_{\varepsilon_1}$ that we inserted when passing from $\Cycle_{n-\varepsilon_1}$ to $\Cycle_n$ will not affect where the collisions $\kappa_2,\kappa_3,\ldots,\kappa_d$ occur. This is why $\kappa_2,\ldots,\kappa_d$ occur at the same places (though possibly at different times) as $\kappa_2',\ldots,\kappa_d'$, respectively. We find that $\mathcal E(\kappa_1\lessdot_{\mathcal T_{\sigma_0}}\kappa_0^*)=\varepsilon_1$ and $\mathcal E(\kappa_1\lessdot_{\mathcal T_{\sigma_0}}\kappa_2\lessdot_{\mathcal T_{\sigma_0}}\cdots\lessdot_{\mathcal T_{\sigma_0}}\kappa_d)=(\varepsilon_2,\ldots,\varepsilon_d)$. The edge $\kappa_1\lessdot_{\mathcal T_{\sigma_0}}\kappa_0^*$ is the top edge in a half-diamond; let $\kappa_0\lessdot_{\mathcal T_{\sigma_0}}\kappa_1$ be the bottom edge of the same half-diamond. Then \Cref{lem:half-diamond} tells us that $\mathcal E(\kappa_0\lessdot_{\mathcal T_{\sigma_0}}\kappa_1)=\varepsilon_1$, so the transversal $\kappa_0\lessdot_{\mathcal T_{\sigma_0}}\kappa_1\lessdot_{\mathcal T_{\sigma_0}}\cdots\lessdot_{\mathcal T_{\sigma_0}}\kappa_d$ of ${\bf H}_{\mathcal T_{\sigma_0}}$ has energy composition $(\varepsilon_1,\ldots,\varepsilon_d)$. Thus, $\widetilde\Omega(\sigma_0)$ is the orbit of $\Rot_{n,d}$ containing $(\varepsilon_1,\ldots,\varepsilon_d)$. If $\mathcal O$ is the orbit of $\Phi_{n,d}$ containing $\sigma_0$, then $\Omega(\mathcal O)=\widehat{\mathcal O}$. Furthermore, $\stand(\mathcal T_{\sigma_0})=\rho$ and $\overline\stand(\mathcal T_{\sigma_0})=\overline\rho$. According to \Cref{lem:depends_orbit}, we have $\stand(\mathcal T_{\sigma})=\rho$ and $\overline\stand(\mathcal T_{\sigma})=\overline\rho$ for all $\sigma\in\mathcal O$. 
\end{proof}

\begin{figure}[ht]
  \begin{center}{\includegraphics[height=8cm]{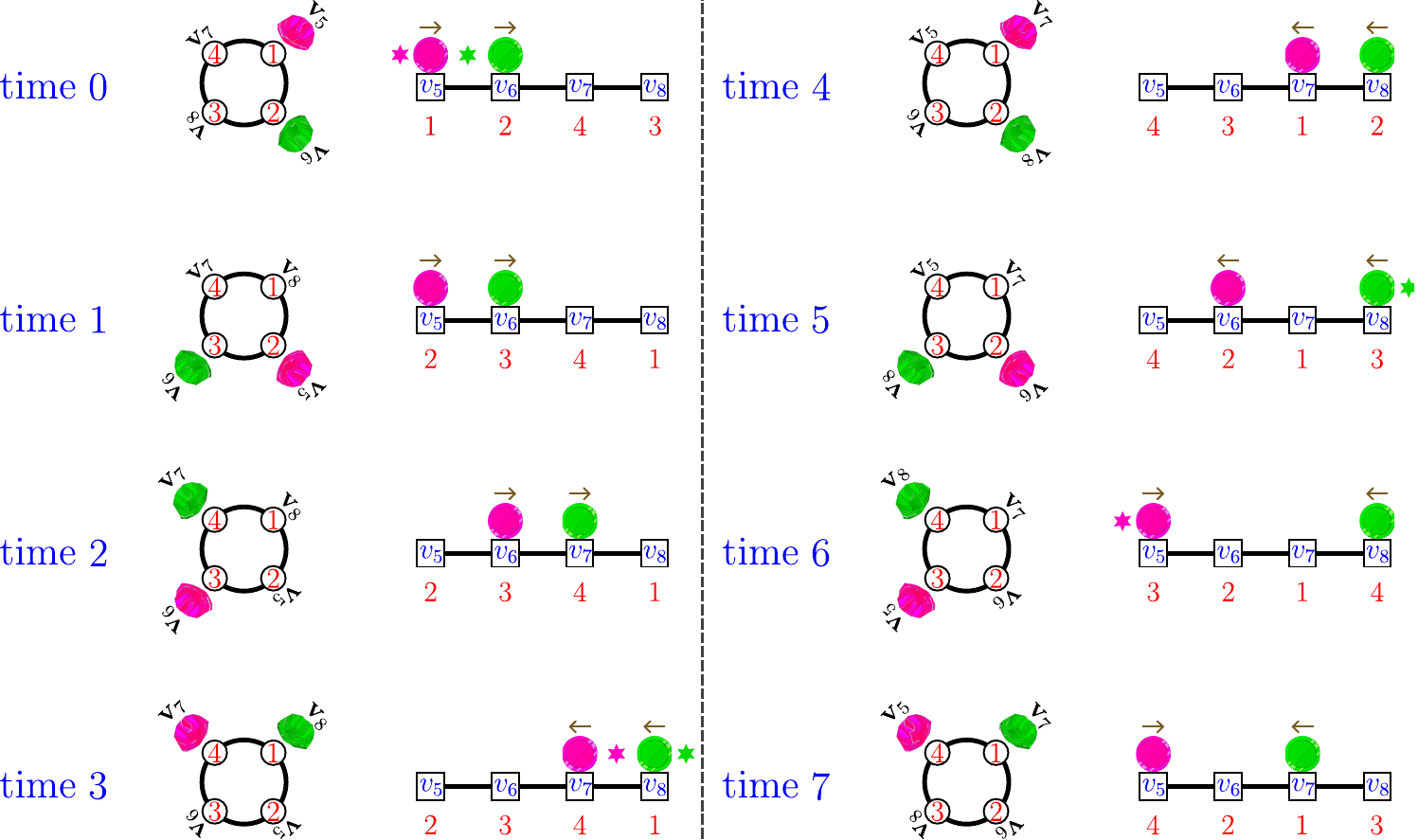}}
  \end{center}
  \caption{The stones diagrams and coins diagrams of the states at times $0,1,\ldots,7$ in the timeline $\mathcal T_{\sigma_0'}$ from \Cref{exam:5}. }\label{Fig8}
\end{figure}

\begin{example}\label{exam:5}
Let us illustrate the proof of \Cref{lem:inequality}. Suppose $n=8$, $d=3$, $\rho=132$, and $\overline \rho=52413$. Let $\widehat{\mathcal O}$ be the orbit of $\Rot_{8,3}$ containing the composition $(\varepsilon_1,\varepsilon_2,\varepsilon_3)=(4,3,1)$. Note that $1$ appears before $2$ in $\rho$ and that $\varepsilon_2\geq 2$. We have $\rho'=21$ and $\overline\rho'=21$. 

We can choose $\sigma_0'$ to be the labeling such that the stones diagrams and coins diagrams of the states of $\mathcal T_{\sigma_0'}$ at times $0,1,\ldots,7$ are shown in \Cref{Fig8}. One can check that the states in this timeline are periodic with period $8$. We can choose the transversal $\kappa_1'\lessdot_{\mathcal T_{\sigma_0'}}\kappa_2'\lessdot_{\mathcal T_{\sigma_0'}}\kappa_3'$ so that $\kappa_1'$ is the left-wall collision at time $0$, $\kappa_2'$ is the two-coins collision at time $3$, and $\kappa_3'$ is the right-wall collision at time $5$.  We have $k=1$ and $\eta=7$. 

\Cref{Fig9} illustrates how we construct the stones diagram of $(\mu,m)$ from that of $(\sigma_1',1)$. In this example, $m=2$. Four vertices were inserted to transform $\Cycle_4$ into $\Cycle_8$, and the vertices were then renamed. Since $\eta=7$, we have placed $\vv_7$ on the vertex $m+d+1=6$. Note that the standardization of $\vv_4,\vv_6,\vv_5$ is $132=\rho$ and that the standardization of $\vv_8,\vv_2,\vv_7,\vv_1,\vv_3$ is $52413=\overline\rho$. 

\Cref{Fig10} shows the stones diagrams and coins diagrams of the states in $\mathcal T_{\sigma_0}$ at times $0,\ldots,11$. (The labelings of the states in this timeline are actually periodic with period $40$, but we chose not to draw the diagrams of $40$ states.) The collision $\kappa_1$ involves $\cc_1$ and $\cc_2$ and occurs at time $0$. Then $\kappa_0^*$ is the left-wall collision at time $9$. The collision $\kappa_2$ involves $\cc_2$ and $\cc_3$ and occurs at time $6$, while $\kappa_3$ is the right-wall collision at time $11$. Observe that $\mathcal E(\kappa_1\lessdot_{\mathcal T_{\sigma_0}}\kappa_0^*)=4=\varepsilon_1$ and $\mathcal E(\kappa_1\lessdot_{\mathcal T_{\sigma_0}}\kappa_2\lessdot_{\mathcal T_{\sigma_0}}\kappa_3)=(3,1)=(\varepsilon_2,\varepsilon_3)$.  
\hfill $\lozenge$
\end{example}

\begin{figure}[ht]
  \begin{center}{\includegraphics[height=3.5cm]{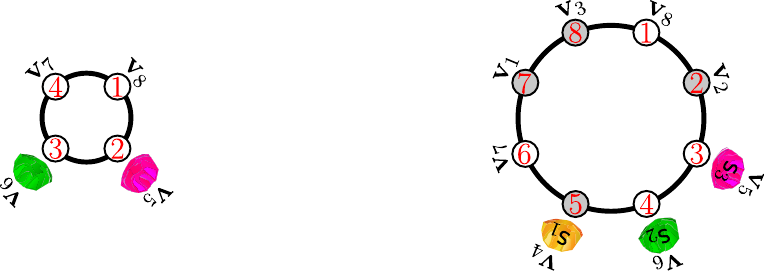}}
  \end{center}
  \caption{On the left is the stones diagram of $(\sigma_1',1)$ from \Cref{exam:5}. On the right is the stones diagram of $(\mu,2)$, which is constructed from that of $(\sigma_1',1)$ by inserting fours new vertices (shaded), a new stone (gold), and the new replicas $\vv_1,\vv_2,\vv_3,\vv_4$. }\label{Fig9}
\end{figure}

\begin{figure}[ht]
  \begin{center}{\includegraphics[width=\linewidth]{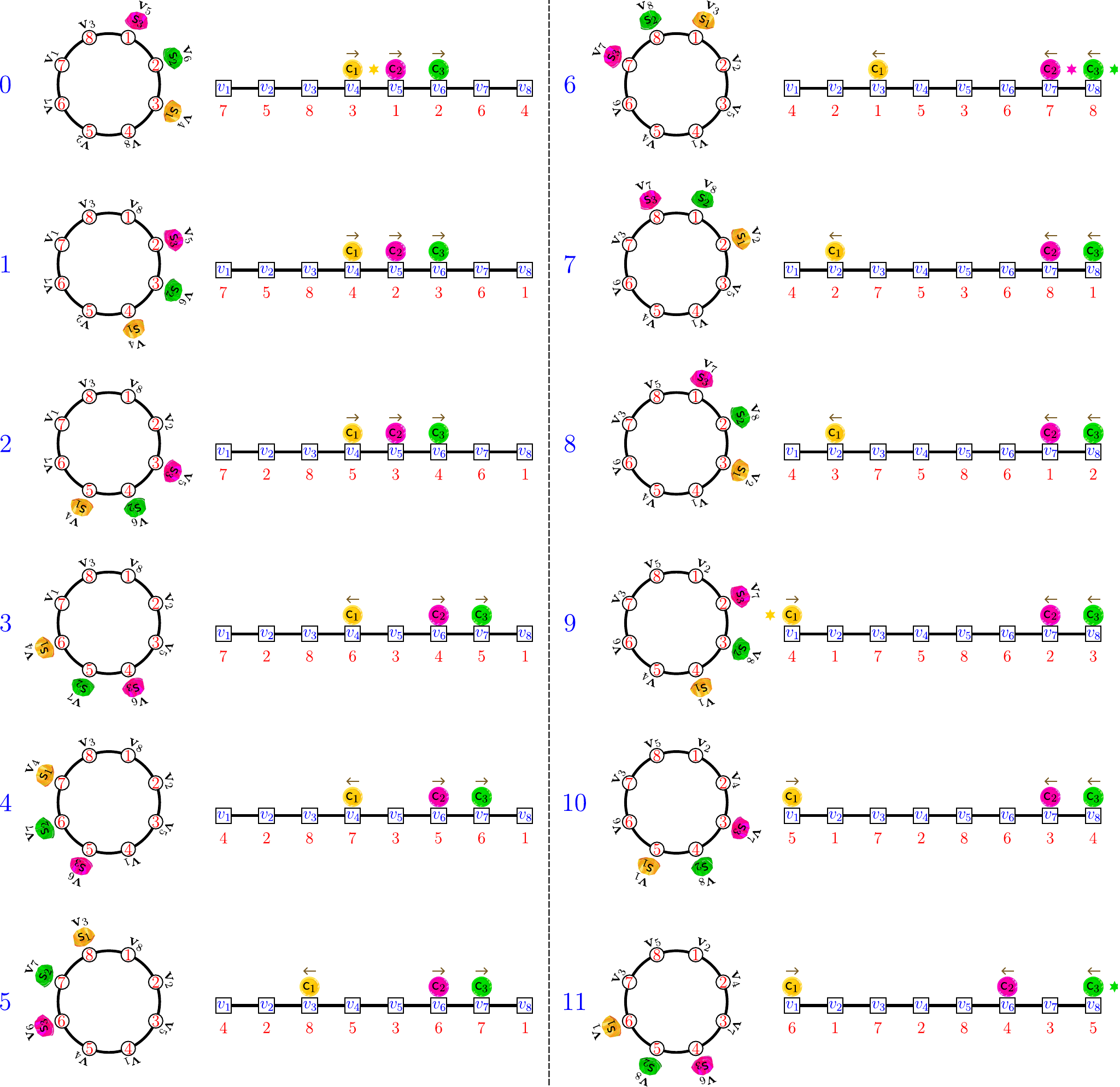}}
  \end{center}
  \caption{The stones diagrams and coins diagrams of the states at times $0,1,\ldots,11$ in the timeline $\mathcal T_{\sigma_0}$ from \Cref{exam:5}. Blue numbers indicate times. Collisions are represented by color-coded stars. }\label{Fig10}
\end{figure}

\begin{proof}[Proof of \Cref{prop:PhiRot}]
We know by \Cref{lem:scaling_factor} that $|\Omega(\mathcal O)|=\frac{d}{n}|\mathcal O|$ for every $\mathcal O\in\Orb_{\Phi_{n,d}}$. It follows from \Cref{lem:inequality} that $|\Omega^{-1}(\widehat{\mathcal O})|\geq d!(n-d)!$ for every $\widehat{\mathcal O}\in\Orb_{\Rot_{n,d}}$. Therefore, 
\[n!=|\Lambda_{\Path_n}|=\sum_{\mathcal O\in\Orb_{\Phi_{n,d}}}|\mathcal O|=\sum_{\widehat{\mathcal O}\in\Orb_{\Rot_{n,d}}}\sum_{\mathcal O\in\Omega^{-1}(\widehat{\mathcal O})}|\mathcal O|=\sum_{\widehat{\mathcal O}\in\Orb_{\Rot_{n,d}}}|\Omega^{-1}(\widehat{\mathcal O})|\cdot\frac{n}{d}|\widehat{\mathcal O}|\] \[\geq d!(n-d)!\frac{n}{d}\sum_{\widehat{\mathcal O}\in\Orb_{\Rot_{n,d}}}|\widehat{\mathcal O}|=n(d-1)!(n-d)!|\Comp_d(n)|=n(d-1)!(n-d)!\binom{n-1}{d-1}=n!.\] This inequality must actually be an equality, so we must have $|\Omega^{-1}(\widehat{\mathcal O})|=d!(n-d)!$ for every $\widehat{\mathcal O}\in\Orb_{\Rot_{n,d}}$. 
\end{proof}

As discussed at the end of \Cref{subsec:reformulation}, \Cref{prop:PhiRot} implies \Cref{thm:main}.

\section{Orbit Structure of Broken Promotion}\label{sec:orbit_broken}

In this final section, we prove \Cref{thm:broken_main2,thm:broken_main}, which describe the orbit structure of $\cyc\Bro_B$ for particular choices of the subset $B\subseteq\ZZ/n\ZZ$. 

\begin{proof}[Proof of \Cref{thm:broken_main2}]
Let $\beta$ be the acyclic orientation of $\Cycle_n$ whose unique source is $d$ and whose unique sink is $n$. To ease notation, let $F(q)=n(d-1)!(n-d-1)![n-d]_{q^d}{n-1\brack d-1}_q$. \Cref{thm:main} tells us that $\TPro_\beta$ has order $d(n-d)$ and that the triple $(\Lambda_{\Path_n},\TPro_\beta,F(q))$ exhibits the cyclic sieving phenomenon. Since the sizes of the orbits of $\TPro_\beta$ are all divisible by $d$ (by \Cref{prop:divisibility}), it follows that $\TPro_\beta^{d}$ has order $n-d$ and that the triple $(\Lambda_{\Path_n},\TPro_\beta^{d},F(q))$ also exhibits the cyclic sieving phenomenon. 
By \Cref{rem:Bro_d}, we have \[\TPro_\beta^d=\left(\cyc^{-1}\Bro_{\{1,\ldots,d\}}^{-1}\right)^n=(\Bro_{\{1,\ldots,d\}}\cyc)^{-n}=\cyc^{-1}(\cyc\Bro_{\{1,\ldots,d\}})^{-n}\cyc,\] so $\TPro_\beta^d$ and $(\cyc\Bro_{\{1,\ldots,d\}})^{n}$ have the same orbit structure. Consequently, $(\cyc\Bro_{\{1,\ldots,d\}})^{n}$ has order $n-d$, and the triple $(\Lambda_{\Path_n},(\cyc\Bro_{\{1,\ldots,d\}})^{n},F(q))$ satisfies the cyclic sieving phenomenon. It follows immediately from \Cref{prop:homomesy} that the sizes of the orbits of $\cyc\Bro_{\{1,\ldots,d\}}$ are all divisible by $n$. Therefore, $\cyc\Bro_{\{1,\ldots,d\}}$ has order $(n-d)n$, and if $\{k_i^{m_i}:1\leq i\leq \ell\}$ is the multiset of orbit sizes of $(\cyc\Bro_{\{1,\ldots,d\}})^n$, then $\{(nk_i)^{m_i/n}:1\leq i\leq \ell\}$ is the multiset of orbit sizes of $\cyc\Bro_{\{1,\ldots,d\}}$. According to \Cref{lem:CSP_technical}, the triple $(\Lambda_{\Path_n},\cyc\Bro_{\{1,\ldots,d\}},\frac{1}{n}[n]_{q^{n-d}}F(q))$ exhibits the cyclic sieving phenomenon. This completes the proof.  
\end{proof}

\begin{proof}[Proof of \Cref{thm:broken_main}]
Let $d$, $n$, $s_i$, and $\mathscr R$ be as in the statement of the theorem. Let $\beta$ be the acyclic orientation of $\Cycle_n$ whose sources are the elements of the set $\mathscr S=\{s_1,\ldots,s_d\}$ and whose sinks are the elements of $\mathscr S-1$. Let $F(q)=n(d-1)!(n-d-1)![n-d]_{q^d}{n-1\brack d-1}_q$. \Cref{thm:main} tells us that $\TPro_\beta$ has order $d(n-d)$ and that the triple $(\Lambda_{\Path_n},\TPro_\beta,F(q))$ exhibits the cyclic sieving phenomenon. Since the sizes of the orbits of $\TPro_\beta$ are all divisible by $n-d$ (by \Cref{prop:divisibility}), it follows that $\TPro_\beta^{n-d}$ has order $d$ and that the triple $(\Lambda_{\Path_n},\TPro_\beta^{n-d},F(q))$ also exhibits the cyclic sieving phenomenon. If we set $\gamma=n-d$, $q=n$, and $r=0$ in \Cref{prop:TProcycBro_R}, we find that $\TPro_\beta^{n-d}=(\cyc\Bro_{\mathscr R})^n$. It follows from \Cref{prop:homomesy} that the sizes of the orbits of $\cyc\Bro_{\mathscr R}$ are all divisible by $n$. Therefore, $\cyc\Bro_{\mathscr R}$ has order $dn$, and if $\{k_i^{m_i}:1\leq i\leq \ell\}$ is the multiset of orbit sizes of $(\cyc\Bro_{\mathscr R})^n$, then $\{(nk_i)^{m_i/n}:1\leq i\leq \ell\}$ is the multiset of orbit sizes of $\cyc\Bro_{\mathscr R}$. According to \Cref{lem:CSP_technical}, the triple $(\Lambda_{\Path_n},\cyc\Bro_{\mathscr R},\frac{1}{n}[n]_{q^d}F(q))$ exhibits the cyclic sieving phenomenon.  
\end{proof}

\section{Future Directions} 

\Cref{thm:toric_main} determines the orbit structure of toric promotion when $G$ is a forest. It is still open to understand the dynamics of toric promotion for other graphs, including cycle graphs. 

\Cref{thm:main} determines the orbit structure of any permutoric promotion operator when $G$ is a path. It would be interesting to gain a better understanding of $\TPro_\pi$ when $G$ is another type of tree, even in the special case when $\pi^{-1}$ has $2$ cyclic descents. A natural place to start could be the case when $G$ is obtained from $\Path_{n-1}$ by adding a new vertex that is adjacent to $v_{n-2}$ (i.e., $G$ is the Dynkin diagram of type $D_n$).

\section*{Acknowledgements}
Colin Defant was supported by the National Science Foundation under Award No.\ 2201907 and by a Benjamin Peirce Fellowship at Harvard University. He thanks Tom Roby for initially suggesting the generalization from toric promotion to permutoric promotion. Hugh Thomas was supported by NSERC Discovery Grant RGPIN-2022-03960 and the Canada Research Chairs program, grant number CRC-2021-00120. We thank Nathan Williams for inspiring conversations.

\end{document}